\documentclass[reqno, A4paper]{amsart}
\usepackage{amsfonts, amsmath, amsthm, amssymb, latexsym, xfrac, mathrsfs, mathtools, float, xcolor}

\usepackage{tikz}
\usetikzlibrary{math}

\usepackage[
  headheight=14pt,
  top=1.45in,bottom=1.35in,left=1.14in,right=1.58in,
  twoside,
  asymmetric,
]{geometry}
\usepackage{hyperref, xcolor}
\hypersetup{
    colorlinks = false,
    linkbordercolor = {white},
    pdfauthor=author
}

\usepackage[utf8]{inputenc}

\usepackage{parskip}

\usepackage[font=scriptsize]{caption}
\usepackage{subcaption}

\newcommand\opint{\ensuremath{]0,\infty[}}
\newcommand\hopint{\ensuremath{[0,\infty)}}

\newcommand\wt{\widetilde}
\newcommand\N{\ensuremath{\mathbb{N}}}
\newcommand\R{\ensuremath{\mathbb{R}}}
\newcommand\Z{\ensuremath{\mathbb{Z}}}
\renewcommand{\H}{\mathcal{H}}

\newcommand\C{\ensuremath{\mathbb{C}}}

\newcommand\T{\ensuremath{\mathbb{T}}}
\renewcommand\a{\mathfrak{a}}

\DeclareMathOperator{\Hom}{Hom}

\newcommand{\ve}{\varepsilon}
\renewcommand{\Im}{\text{Im}}

\newcommand{\ol}{\overline}

\newcommand\weylcham{\ensuremath{\overline{\a_{+}}}}

\newcommand{\wtlatt}{\ensuremath{P}}
\newcommand{\domwt}{\ensuremath{P_{+}}}
\newcommand{\rtlatt}{\ensuremath{Q}}
\newcommand{\crtlatt}{\ensuremath{\rtlatt^{\vee}}}

\newcommand{\alc}{\ensuremath{A_{0}}}

\DeclareMathOperator{\spann}{span}

\makeatother

\usepackage[nameinlink,capitalize]{cleveref}%,german

\makeatletter
\@namedef{subjclassname@2020}{\textup{2020} Mathematics Subject Classification}
\makeatother

\theoremstyle{plain}
\newtheorem{theorem}{Theorem}[section]
\newtheorem{corollary}[theorem]{Corollary}
\newtheorem{lemma}[theorem]{Lemma}
\newtheorem{proposition}[theorem]{Proposition}
\theoremstyle{definition}
\newtheorem{definition}[theorem]{Definition}
\theoremstyle{remark}
\newtheorem{remark}[theorem]{Remark}
\newtheorem{remarks}[theorem]{Remarks}

\newtheorem{example}[theorem]{Example}
\numberwithin{equation}{section}

\let\phi\varphi

\title[Littlewood-Paley theory associated with root systems]{Littlewood-Paley theory for orthogonal expansions associated with root systems}
\author{Lukas Langen and Margit R\"osler}
\address{Institut f\"ur Mathematik, Universit\"at Paderborn, Warburger Str. 100, D-33098 Paderborn, Germany}
\email{llangen@math.upb.de, roesler@math.upb.de}

\subjclass[2020]{Primary: 42B25, 33C52; Secondary: 47D07, 43A85}
\keywords{Heckman-Opdam polynomials, heat semigroup, Poisson semigroup, $g$-function, Riesz transform}

\thanks{The authors were supported by the  German Research Foundation (DFG), via the grant SFB-TRR  358/1 2023-491392403.}

\begin{document}

\date{\today}

\begin{abstract}
  We introduce the non-symmetric heat and Poisson semigroups associated with the Heckman-Opdam Laplacian in the compact setting. Based on the Poisson semigroup, we study several Littlewood-Paley $g$-functions in the spirit of Stein's work for compact Lie groups and prove their $L^{p}$-boundedness for $1<p\leq 2$ and for some of them also for $1<p<\infty$.
  As an application, we define associated Riesz transforms and imaginary powers and prove their $L^{p}$-continuity for $1<p<\infty$.
  Passing to the average with respect to the action of the associated reflection group, we obtain $L^{p}$-boundedness of $g$-functions for the symmetric Poisson semigroup for all $1<p<\infty$. In particular, our framework covers the Littlewood-Paley-Stein theory for Jacobi polynomial expansions and corresponding direct product settings as special cases.
\end{abstract}

\maketitle
\section{Introduction}

Since the seminal work of Muckenhoupt and Stein \cite{MS65} for classical orthogonal expansions and the monograph of Stein \cite{St70}, treating both compact Lie groups as well as a general framework related to diffusion semigroups,  there has been a continuous development of Littlewood-Paley theory for orthogonal expansions beyond the classical Fourier setting. Among the broad literature, we mention \cite{CS79, Th93, ST03, NSt06, NS08, NS12}. 

In this paper, we initiate the development of Littlewood-Paley theory for orthogonal expansions in the compact  Heckman-Opdam setting, also known as Dunkl theory in the compact trigonometric case; see \cite{HS94, Op95, Op00, HO21}. This theory simultaneously  generalizes  the radial analysis on Riemannian symmetric spaces of the compact type on one hand, and the theory of classical Fourier series as well as Jacobi polynomial expansions in one variable on the other. The role of invariant differential operators (such as the Laplace-Beltrami operator) on a Riemannian symmetric space is taken on by Dunkl-Cherednik operators, which are commuting differential-reflection operators associated with some root system on a Euclidean space. 
Besides trigonometric Dunkl theory in its compact and non-compact variants, there is a particularly  rich harmonic analysis available in rational Dunkl theory, which generalizes radial analysis on symmetric spaces of Euclidean type; see \cite{Du89, dJ93} and  \cite{RV08, A17} for an overview. In the rational Dunkl setting, many aspects of Littlewood-Paley theory, in particular $g$-functions and Riesz transforms,  have been extensively studied, see \cite{S05, TX07, ADH19, AS12, LZL17, DH22, H23, LZ23, DW26}.
In contrast, the trigonometric Dunkl setting has so far been much less explored. See however \cite{BS11}, which deals with the Hilbert transform in rank one, and the recent paper \cite{K25} on spectral and Fourier multipliers in the general Heckman-Opdam setting. 

In the present paper, we study $g$-functions and Riesz transforms associated with multivariate orthogonal expansions in terms of Heckman-Opdam polynomials as introduced in \cite{Op95}. See also \cite{HS94, HO21}, where symmetric Heckman-Opdam polynomials are treated, which are obtained as Weyl-group symmetrizations of the non-symmetric ones. 
Heckman-Opdam polynomials are associated with a root system $R$ on a finite-dimensional Euclidean space $\mathfrak a$ and a continuous family of parameters, given by a so-called multiplicity function $k$ on $R$ which is required to be invariant under the Weyl group  of $R$. The non-symmetric Heckman-Opdam polynomials are characterized as the joint polynomial eigenfunctions of an associated family of Dunkl-Cherednik operators. Indexed by the weight lattice of $R$, these polynomials form an orthogonal basis 
of the $L^2$-space on the compact torus $\mathbb T= \mathfrak a/2\pi Q^\vee$, where $Q^\vee $ is the coroot lattice of $R$, with respect to a certain trigonometric weight function depending on $R$ and the multiplicity $k.$ Due to a lack of suitable kernel bounds, there is  currently no established Calder\'on-Zygmund theory in the general compact Heckman-Opdam setting; this is in contrast to  the framework of Jacobi expansions, see \cite{NS12, NS13}. We therefore follow the classical approach of \cite{St70}. In our setting, the role of left-invariant vector fields in \cite{St70} is taken on by Dunkl-Cherednik operators. Indeed, the full algebra of Dunkl-Cherednik operators is generated by the first-order Dunkl-Cherednik operators, and in particular, the Heckman-Opdam Laplacian is naturally built up from first-order Dunkl-Cherednik operators similar to how the classical Laplacian is built up from partial derivatives. This simple fact turns out to be crucial for our studies. However, the analysis becomes more involved compared to \cite{St70}  due to the additional reflection terms in our operators.  
We mention that a decomposition of the relevant second-order operator determining a family of orthogonal polynomials into simpler first-order parts, though of somewhat different flavour, was also already at the heart of \cite{NSt06, NS08, FSS15}. 

Let us highlight some special cases covered by our general framework. Trivially, for the multiplicity $k=0$, Dunkl-Cherednik operators coincide with usual partial derivatives and  one recovers classical Littlewood-Paley theory on the torus $\T\cong \R^{n}/2\pi\Z^{n}$. In the rank 1 case, that is for the root system $BC_{1}\subseteq\a =\R$, the symmetric Heckman-Opdam polynomials are just classical Jacobi polynomials. In particular, by symmetrizing one obtains Littlewood-Paley theory for Jacobi expansions with arbitrary parameters $\alpha,\beta\geq -1/2$. Furthermore, the direct product situation $R=BC_{1}\times\ldots\times BC_{1}\subseteq\R^{n}$ naturally covers the multi-dimensional Jacobi expansion as studied in \cite{NS08}. We refer  to Example \ref{example:rank1} for further details. Further, spherical functions of Riemannian symmetric spaces $U/K$ of the compact type are given by symmetric Heckman-Opdam polynomials with certain half-integer multiplicities $k$ arising from root space decompositions, see \cite{HS94,HO21, RR15}. Thus, our framework in particular covers Littlewood-Paley theory for radial functions, that is $K$-invariant functions, on $U/K$.

The organization of this paper is as follows: In \cref{sec:prelim}, we introduce notations and recapitulate the basic concepts of compact Heckman-Opdam theory.
In \cref{sec:heat} we introduce the heat semigroup in the general compact Heckman-Opdam setting. This is a direct generalization of the symmetric (Weyl group invariant) heat semigroup that was studied in \cite{RR11}. By subordination, we then define the Poisson semigroup in \cref{sec:poisson} and study its basic properties. Both semigroups are symmetric diffusion semigroups in the sense of Stein, which is instrumental for the implementation of general Littlewood-Paley theory for $g$-functions involving only time-derivatives. 

\cref{sec:gfun} makes up the main part of this paper. We introduce several Littlewood-Paley $g$-functions in the spirit of Stein, part  of them involving Dunkl-Cherednik operators on the spatial side, and prove their  $L^{p}$-boundedness for all $1<p\leq 2$. The different $g$-functions are intimately connected,  and their interplay allows for dealing with reflection terms which naturally appear in the non-symmetric theory. Here we  employ some ideas of \cite{S05} and \cite{LZL17} in the rational Dunkl case.  Next, in \cref{sec:riesz}, we use our previously obtained results to define natural Riesz transforms and Riesz potentials associated with the Heckman-Opdam Laplacian and prove their $L^{p}$-boundedness for all $1<p<\infty$. In turn, this proves $L^{p}$-boundedness for some of the $g$-functions involving Dunkl-Cherednik operators on the spatial side considered in the previous section.
Finally, in \cref{sec:symmetric} we specialize to the $W$-invariant (also called \emph{symmetric}) setting. Here, we obtain $L^{p}$-boundedness of $g$-functions for all $1 < p <\infty$ as a direct consequence of our previous results using the non-symmetric Riesz transforms. This reveals how the non-symmetric theory simultaneously enriches and generalizes the symmetric setting. As special cases of the symmetric setting we obtain results for multi-dimensional Jacobi expansions and radial harmonic analysis on Riemannian symmetric spaces of the compact type.

\section{Cherednik Operators and Heckman-Opdam Polynomials}\label{sec:prelim}

In this section, we develop basic facts from Dunkl-Cherednik theory and the associated orthogonal polynomials. For a general background on this topic, the reader is referred to \cite{HS94, Op95, Op00, HO21}. We also include some additional fundamental properties needed later on.

Let $R$ be a crystallographic, but not necessarily reduced root system in a Euclidean space $(\a, \langle \cdot, \cdot\rangle)$ of finite dimension $n.$  We write $|x|=\sqrt{\langle x, x\rangle}$ for the associated norm and extend $\langle \cdot, \cdot\rangle $ in a bilinear way to the complexification $\a_{\C}$ of $\a$. We identify $\a$ with its dual $\a^{\ast}=\Hom(\a,\, \R)$ via the given inner product. For $\alpha\in R$ the reflection in the hyperplane $\alpha^{\perp}\subseteq\a$ is given by $s_{\alpha}(x)=x-\langle \alpha, x\rangle \alpha^{\vee}$, where $\alpha^{\vee}=\frac{2\alpha}{\langle \alpha, \alpha\rangle }$ denotes the coroot of $\alpha$. Let $W$ be the associated Weyl group, which is generated by the reflections $s_{\alpha},\,\alpha\in R$. The action of $W$ on $\a$ extends naturally to $\a_{\C}$ and thus to functions $f\colon\a_{\C}\to\C$ via $w.f(z)=f(w^{-1}z)$. We fix a positive subsystem $R_{+}$ of $R$ as well as a non-negative multiplicity function $k$ on $R$, that is a $W$-invariant function $k\colon R\to [0,\infty[,\ \alpha\mapsto k_{\alpha}$. For $\xi\in\a_{\C}$, the (Dunkl-) Cherednik operator $T_{\xi}=T_{\xi}(R_{+}, k)$ on $\a_{\C}$ is defined by
\begin{align*}
  T_{\xi}f = \partial_{\xi}f - \langle \rho, \xi\rangle f + \sum_{\alpha\in R_{+}} k_{\alpha} \langle \alpha, \xi\rangle \frac{f-s_{\alpha}f}{1-e^{-\alpha}}, 
\end{align*}
where for $\lambda\in\a_{\C}$ the exponential polynomial $e^{\lambda}$ is defined by 
$$e^{\lambda}(z)=e^{\langle \lambda, z\rangle }\ (z\in\a_{\C}),$$ $ \partial_\xi$ denotes the directional derivative  $\,\partial_\xi f(z) = \lim_{h\in \mathbb R, h\to 0} \frac{f(z+h\xi)-f(z)}{h},$
 and $$\rho=\rho(R_{+}, k):=\frac 1 2 \sum_{\alpha\in R_{+}}k_{\alpha}\alpha$$ is the generalized Weyl vector. It is contained in the topological closure of the positive chamber
 $$ \frak a_+=\{ x\in \frak a: \langle \alpha, x\rangle > 0 \ \forall\alpha \in R_+\}.$$
 We further introduce the coroot lattice 
$\crtlatt\coloneqq \spann_{\Z}\{\alpha^{\vee}:\alpha\in R\}$ and the weight lattice 
$$P:= \{ \lambda\in\a : \langle \lambda, \alpha^{\vee}\rangle \in\Z\ \forall\alpha\in R\}. $$ 
The weight lattice gives rise to the algebra of exponential polynomials $ \spann_{\C}\{e^{\lambda} : \lambda\in \wtlatt\}$, which naturally identifies with the group algebra $ \mathbb C[P].$  The operators $T_\xi, \, \xi \in \frak a$ leave this space of exponential polynomials invariant and commute, c.f. \cite{Op95}. 
 Note that  the $e^{\lambda},\, \lambda\in \wtlatt$ are $2\pi i \crtlatt$-periodic and constitute the group characters  of the compact torus $\,i\a/2\pi i \crtlatt$.
 In contrast to the references above,  we prefer to work on the real torus 
 $$ \mathbb T:= \frak a/2\pi Q^\vee,$$
whose dual group is $\{ e^{i\lambda}: \lambda \in P\}.$ 
 We therefore introduce for $\xi \in \frak a$   the modified Cherednik operators
 \begin{equation} \label{Cher_mod} D_\xi f := \partial_\xi f -i\langle \rho, \xi\rangle f       + \, \sum_{\alpha\in R_+} ik_\alpha \langle \alpha, \xi\rangle \,d_\alpha f \end{equation}
 on $\frak a_{\mathbb C}, $ where
 $$ d_\alpha f = \frac{f-s_\alpha f}{1-e^{-i\alpha}}.$$
 For a differentiable function $F$ on $i\frak a, $ the function   $f (x): = F(ix)$ defined on $\frak a$ then satisfies \begin{equation}\label{umrechnung}
(D_\xi f \,)(x) = (T_{i\xi} F) (ix).\end{equation}
   Consider the space of trigonometric polynomials 
   \begin{equation}\label{trigpol} 
\mathcal T := \text{span}_{\mathbb C}\{e^{i\lambda}, \, \lambda \in P\}
\end{equation}
as (smooth) functions on $\mathbb T,$ given by $e^{i\lambda}(x) = e^{i\langle \lambda, x\rangle}.$ The Cherednik operators $D_\xi, \, \xi \in \frak a$ leave $\mathcal T$ invariant and commute as well. Moreover, they behave
in a natural way on spaces of differentiable functions on $\mathbb T$. Indeed, we have

\begin{lemma}\label{diff_properties} Let $f \in C^m(\mathbb T)$ with $m\in \mathbb N$. Then for $\xi \in \frak a,$ $D_\xi f \in 
C^{m-1}(\mathbb T).$ Furthermore, the operators $D_{\xi}\colon C^{m}(\T)\to C^{m-1}(\T)$ are continuous with respect to the usual topologies.
 \end{lemma}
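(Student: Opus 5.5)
The operator $D_\xi$ in \eqref{Cher_mod} has three parts: the derivative $\partial_\xi f$, the multiplication term $-i\langle\rho,\xi\rangle f$, and the finite sum of difference terms $ik_\alpha\langle\alpha,\xi\rangle d_\alpha f$. The first two clearly map $C^m(\T)$ continuously into $C^{m-1}(\T)$ (indeed into $C^m$ for the multiplication). So everything reduces to showing that for each fixed $\alpha\in R_+$ the operator $d_\alpha$ maps $C^m(\T)$ continuously into $C^{m-1}(\T)$. Here a function on $\T$ is a $2\pi\crtlatt$-periodic function on $\a$. I will use the $C^m$ topology given by the seminorms $\|f\|_{C^m}=\sum_{|\beta|\le m}\|\partial^\beta f\|_\infty$.

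First I check that $d_\alpha f$ is again $2\pi\crtlatt$-periodic. For $\mu\in\crtlatt$ we have $s_\alpha(x+2\pi\mu)=s_\alpha x+2\pi s_\alpha\mu$ with $s_\alpha\mu\in\crtlatt$, so $s_\alpha f$ is periodic. The denominator $e^{-i\alpha}$ is periodic as well, since $\langle\alpha,\mu\rangle\in\Z$ for $\alpha\in R$, $\mu\in\crtlatt$ (crystallographic). So the quotient is periodic wherever it is defined.

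The main point is the removable singularity along the zero set of $1-e^{-i\langle\alpha,x\rangle}$, i.e.\ the hyperplanes $H_j=\{\langle\alpha,x\rangle=2\pi j\}$, $j\in\Z$. I would use the representation
\[
f(x)-f(s_\alpha x)=\int_0^1\frac{d}{dt}f\bigl(x-t\langle\alpha,x\rangle\alpha^\vee\bigr)\,dt=-\langle\alpha,x\rangle\int_0^1(\partial_{\alpha^\vee}f)\bigl(x-t\langle\alpha,x\rangle\alpha^\vee\bigr)\,dt,
\]
which is valid because $s_\alpha x=x-\langle\alpha,x\rangle\alpha^\vee$. Writing $u=\langle\alpha,x\rangle$, this gives near $H_0$
\[
d_\alpha f(x)=-\frac{u}{1-e^{-iu}}\int_0^1(\partial_{\alpha^\vee}f)(x-tu\alpha^\vee)\,dt .
\]
The factor $u/(1-e^{-iu})$ is real-analytic in $u$ on $|u|<2\pi$ (equivalently, $\phi(u)=(1-e^{-iu})/u$ is entire and nonvanishing there). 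The integral is $C^{m-1}$ in $x$ by differentiation under the integral sign, with derivatives up to order $m-1$ bounded by $C\|f\|_{C^m}$. Hence on the strip $|\langle\alpha,x\rangle|\le 3\pi/2$ we get $d_\alpha f\in C^{m-1}$ with $\|d_\alpha f\|_{C^{m-1}}\le C\|f\|_{C^m}$ there.

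Near the other hyperplanes $H_j$ I reduce to $H_0$ by translation. Choose $v=j\alpha^\vee/\!\!\ldots$; more cleanly, I would use the reflection $s_{\alpha,j}$ in $H_j$ instead of $s_\alpha$. Note that $s_\alpha x$ and $s_{\alpha,j}x=s_\alpha x+2\pi j\alpha^\vee$ differ by $2\pi j\alpha^\vee\in 2\pi\crtlatt$, so $f(s_\alpha x)=f(s_{\alpha,j}x)$ by periodicity. The same integral formula then holds with $u=\langle\alpha,x\rangle-2\pi j$, and $1-e^{-iu}=1-e^{-i\langle\alpha,x\rangle}$. This gives the estimate on the strips $|\langle\alpha,x\rangle-2\pi j|\le 3\pi/2$.

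These strips cover $\a$. On each strip the constants are uniform in $j$ and $x$, because only $\|f\|_{C^m}$ and fixed smooth functions of $u$ on a fixed interval enter. So $d_\alpha f\in C^{m-1}(\T)$ with $\|d_\alpha f\|_{C^{m-1}}\le C_\alpha\|f\|_{C^m}$, and linearity gives continuity.

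The step needing the most care is the chain-rule bookkeeping for derivatives of $x\mapsto(\partial_{\alpha^\vee}f)(x-tu(x)\alpha^\vee)$. Since $u$ is linear in $x$, the map $x\mapsto x-tu(x)\alpha^\vee$ is affine with coefficients bounded uniformly in $t\in[0,1]$, so the derivatives of order $\le m-1$ are bounded by a constant times $\|f\|_{C^m}$. That completes the plan.
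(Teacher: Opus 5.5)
Your proposal is correct and follows essentially the paper's argument. Both reduce to $d_\alpha$, use periodicity to move to a lattice translate (you replace $s_\alpha x$ by $s_\alpha x+2\pi j\alpha^\vee$, the paper replaces $x$ by $x-2\pi k\alpha^\vee$), apply the fundamental theorem of calculus along the segment, and observe that the prefactor $u/(1-e^{-iu})$ is smooth near the hyperplane; your uniform strip estimate and periodicity check just make the paper's local argument more explicit. One harmless sign slip: $\int_0^1 \frac{d}{dt} f(x-t\langle\alpha,x\rangle\alpha^\vee)\,dt = f(s_\alpha x)-f(x)$, so the formula should read $d_\alpha f(x)=\frac{u}{1-e^{-iu}}\int_0^1(\partial_{\alpha^\vee}f)(x-tu\alpha^\vee)\,dt$ without the minus sign, which does not affect the regularity or continuity conclusions.
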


 \begin{proof} 
We may regard $f\in C^m(\mathbb T)$ as a function from $C^m(\frak a)$ which is $2\pi Q^\vee$-periodic. 
It suffices to analyze the operators $d_\alpha$. We need to see what happens at the points $x_{0}\in \a$ with $\langle \alpha, x_{0}\rangle = 2\pi  k, k\in\Z$. So consider $x\in \a$ close to $x_{0}$ with $\langle \alpha, x\rangle \notin 2\pi \Z$.  
 Then by the fundamental theorem of calculus, 
\begin{align*} -d_\alpha f (x) & = \frac{f(s_\alpha x) - f(x -2\pi k \alpha^\vee)}{1 - e^{-i\langle \alpha, x\rangle} } \\ & =  \frac{1}{1- e^{-i\langle \alpha, x\rangle} }      \int_0^1 \frac{d}{dt} 
f(x - 2 \pi k\alpha^\vee + t(2\pi k - \langle \alpha, x\rangle ) \alpha^\vee )dt \\ 
& = \frac{2\pi k - \langle \alpha, x\rangle}{1- e^{-i\langle \alpha, x\rangle} }
 \cdot \int_0^1 (\partial_{\alpha^\vee}\!f)\bigl(x - 2 \pi k\alpha^\vee + t(2\pi k - \langle \alpha, x\rangle )\alpha ^\vee \bigr)dt.
\end{align*}
As the first factor is smooth in a neighborhood of $x_0$, this shows that $\,d_\alpha f\in C^{m-1}(\mathbb T)$ for $f\in C^m(\mathbb T),$ and  the continuity statement follows as well.  
\end{proof}

Following \cite{Op95}, we equip  $\mathbb T$ with the $W$-invariant weight function
\begin{align*}
  \delta_{k}(x)\coloneqq \prod_{\alpha\in R_{+}}\left\vert 2\sin\frac{\langle \alpha, x\rangle}{2}\right\vert^{2k_{\alpha}} = \prod_{\alpha\in R} \big\vert 1-e^{-i\langle\alpha,x\rangle}\big\vert^{k_\alpha}
\end{align*}
and write
\begin{align*}
  \langle f, g\rangle_{\delta_{k}} \coloneqq \int_{\mathbb T} f(x)\ol{g(x)}\,\delta_{k}(x) dx
\end{align*}
for the scalar product on $L^{2}(\mathbb T,\delta_{k})$. Here $dx$ denotes the normalized Haar measure of $\mathbb T$.

\begin{remark}\label{rem:funDomain}
  Let $\{\alpha_{1}^{\vee},\ldots, \alpha_{n}^{\vee}\}\subseteq \crtlatt$ be a $\Z$-basis for $\crtlatt$. For example, if $R$ is reduced one could take $\{\alpha_{1},\ldots,\alpha_{n}\}\subseteq R$ a simple system in $R$ and $\alpha_{j}^{\vee}$ the corresponding coroots (where $n=\text{dim} \,\frak a$). Then a fundamental domain for $\mathbb T$ is given by the set $\,\{\sum_{j=1}^{n}t_{j}\alpha^{\vee}_{j} : 0\leq t_{j}< 2\pi\text{\ for all\ } j\}$, and
  \begin{align*}
    \int_{\T}f(x) dx = \frac{1}{(2\pi)^{n}}\int_{[0, 2\pi[^{n}} f\Big(\sum_{j=1}^{n}t_{j}\alpha_{j}^{\vee}\Big)\ dt_{1}\ldots dt_{n}. 
  \end{align*}
\end{remark}

\begin{lemma}\label{antisymm}
  The (modified) Cherednik operators $D_{\xi}, \xi\in \a$ are anti-symmetric on $C^{1}(\mathbb T)$ with respect to $\langle \cdot, \cdot\rangle_{\delta_k}$, i.e.
  \begin{align*}
    \langle D_{\xi}f, g\rangle_{\delta_k} = -\langle f, D_{\xi}g\rangle_{\delta_k}\quad\text{for all}\ f, g\in C^{1}(\mathbb T). 
  \end{align*}
\end{lemma}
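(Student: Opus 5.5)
The plan is a direct computation of $\langle D_\xi f,g\rangle_{\delta_k}+\langle f,D_\xi g\rangle_{\delta_k}$, showing that it vanishes. I would handle the first-order part by integration by parts against the weight $\delta_k$, and the difference part by the change of variables $x\mapsto s_\alpha x$. This substitution is legitimate because $\delta_k$ is $W$-invariant, the Haar measure on $\T$ is $W$-invariant (since $W$ preserves $2\pi\crtlatt$), and $(1-e^{-i\alpha})\circ s_\alpha = 1-e^{i\alpha}$.

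\emph{Step 1: the derivative part.} Away from the reflecting hyperplanes $H_{\alpha,m}=\{\langle\alpha,x\rangle\in 2\pi\Z\}$, the weight $\delta_k$ is smooth. Using $\cot(t/2)=i\bigl(\tfrac{2}{1-e^{-it}}-1\bigr)$ one gets
\begin{align*}
\frac{\partial_\xi\delta_k}{\delta_k}=\sum_{\alpha\in R_+}k_\alpha\langle\alpha,\xi\rangle\cot\frac{\langle\alpha,x\rangle}{2}
=-2i\langle\rho,\xi\rangle+\sum_{\alpha\in R_+}\frac{2ik_\alpha\langle\alpha,\xi\rangle}{1-e^{-i\alpha}}.
\end{align*}
The terms $-i\langle\rho,\xi\rangle f\bar g$ coming from $D_\xi f$ and the conjugate of $D_\xi g$ cancel each other. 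Integrating $\partial_\xi(f\bar g)\,\delta_k$ by parts then leaves
\begin{align*}
2i\langle\rho,\xi\rangle\int f\bar g\,\delta_k-\sum_\alpha 2ik_\alpha\langle\alpha,\xi\rangle\int\frac{f\bar g}{1-e^{-i\alpha}}\,\delta_k .
\end{align*}

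\emph{Step 2: the reflection part.} The reflection terms contribute, for each $\alpha\in R_+$,
\begin{align*}
ik_\alpha\langle\alpha,\xi\rangle\int\Bigl[\frac{(f-s_\alpha f)\bar g}{1-e^{-i\alpha}}-\frac{f(\bar g-s_\alpha\bar g)}{1-e^{i\alpha}}\Bigr]\delta_k .
\end{align*}
I would rewrite $\frac1{1-e^{i\alpha}}=1-\frac1{1-e^{-i\alpha}}$ and substitute $x\mapsto s_\alpha x$ in the terms containing $s_\alpha f\cdot\bar g$. The pieces then reorganize into
\begin{align*}
2\int\frac{f\bar g}{1-e^{-i\alpha}}\delta_k-\int f\bar g\,\delta_k+\int f\,s_\alpha\bar g\,\delta_k
\end{align*}
together with mixed terms $\pm\int \frac{f\,s_\alpha\bar g}{1-e^{-i\alpha}}\delta_k$, which cancel one another after the substitution. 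The singular pieces then cancel those from Step 1, and the remaining terms $-ik_\alpha\langle\alpha,\xi\rangle\int f\bar g\,\delta_k$ sum to $-2i\langle\rho,\xi\rangle\int f\bar g\,\delta_k$, since $\rho=\frac12\sum k_\alpha\alpha$. I expect the leftover $\int f\,s_\alpha\bar g\,\delta_k$ pieces to cancel as well, once the signs are tracked carefully through the substitution. This sign bookkeeping is the point I would double-check first.

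\emph{Main obstacle: justifying the singular integrals.} Individual integrals such as $\int f\bar g\,(1-e^{-i\alpha})^{-1}\delta_k$ need not converge absolutely when $k_\alpha$ is small, and the integration by parts must be justified near the hyperplanes. I would carry out the whole computation on $\T_\varepsilon$, defined as $\T$ with an $\varepsilon$-neighbourhood of all reflecting hyperplanes removed. This set is $W$-invariant, so the substitutions $x\mapsto s_\alpha x$ remain valid on it. The boundary terms from integration by parts are $O(\varepsilon^{2k_{\min}})$ times bounded surface measure, because $\delta_k$ vanishes to order $2k_\alpha$ at $H_{\alpha,m}$; for components with $k_\alpha=0$ there is no singularity and the corresponding hyperplanes need not be removed. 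The original integrands $D_\xi f\cdot\bar g\,\delta_k$ are bounded, by \cref{diff_properties}. Letting $\varepsilon\to0$ and using dominated convergence on the left-hand side then gives the identity.
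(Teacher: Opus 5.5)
Your argument is correct, and it handles small multiplicities differently from the paper.

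\textbf{How the paper proceeds.} The paper first assumes $k_\alpha\ge 1$ for all $\alpha$. Then $\delta_k\in C^1(\mathbb T)$ and $\delta_k\cot\frac{\langle\alpha,\cdot\rangle}{2}$ is bounded, so integration by parts over a fundamental domain produces no boundary terms. The adjoint of $d_\alpha$ is computed directly as $\langle d_\alpha f,g\rangle_{\delta_k}=\langle f,\,d_\alpha g+ig\cot\frac{\langle\alpha,\cdot\rangle}{2}\rangle_{\delta_k}$, and these $\cot$-terms cancel the ones coming from $\partial_\xi\delta_k$. The general case $k\ge 0$ is then obtained by analytic continuation in $k$, which is only asserted. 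Making it rigorous requires checking that both sides are analytic in $k$ on a connected complex domain containing $\{k\ge 0\}$.

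\textbf{What your route buys.} Your truncation to the $W$-invariant set $\mathbb T_\varepsilon$ proves the identity for all $k\ge 0$ at once, with no continuation argument. It also shows transparently why $k_\alpha>0$ is enough: the boundary terms are $O(\varepsilon^{2k_{\min}})$, and the left-hand side converges by \cref{diff_properties} and dominated convergence. The price is applying the divergence theorem on a finite union of shrunken convex polytopes, which is harmless. The reflection algebra is the same in both proofs.

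\textbf{The sign check you flagged.} Write $c=(1-e^{-i\alpha})^{-1}$, so that $s_\alpha c=1-c$. Substituting $x\mapsto s_\alpha x$ turns $-\int c\,(s_\alpha f)\,\bar g\,\delta_k$ into $-\int(1-c)\,f\,s_\alpha\bar g\,\delta_k$. This exactly cancels the term $+\int(1-c)\,f\,s_\alpha\bar g\,\delta_k$ coming from $-f\,\overline{d_\alpha g}$. Hence all $f\,s_\alpha\bar g$ terms, mixed or not, disappear, and the bracket integrates to $\int(2c-1)\,f\bar g\,\delta_k=-i\int\cot\frac{\langle\alpha,\cdot\rangle}{2}\,f\bar g\,\delta_k$. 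This cancels the Step 1 contribution directly, so the detour through $\rho$ is unnecessary, though it is correct.

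\textbf{A minor inaccuracy.} The integrals $\int f\bar g\,(1-e^{-i\alpha})^{-1}\delta_k$ converge absolutely for every $k_\alpha>0$, since the integrand is $O(\operatorname{dist}^{2k_\alpha-1})$ near $H_{\alpha,m}$. What the truncation is really needed for is the integration by parts, because $\delta_k\notin C^1$ when some $k_\alpha\le\frac12$. Convergence of these integrals is not the issue.
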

\begin{proof}
  This follows from \cite[Prop. 2.3]{Op95}, but as the reference \cite[Prop. 3.8]{Ch91} cited there (implicitly) requires $k_\alpha \in \mathbb N_0 ,$ we include an explicit proof for the reader's convenience. We first assume that  $k\geq 1,$ i.e. $k_\alpha \geq 1 $ for all $\alpha$, in which case $\delta_k \in C^1(\mathbb T).$ We consider a fundamental domain according to \cref{rem:funDomain}. Integration by parts w.r.t. the coordinate $t_{j}$ then gives
  \begin{align*}
    \int_{\mathbb T} \partial_{\alpha_{j}^{\vee}}f(x)\ol{g(x)}\,\delta_{k}(x)dx = -\int_{\mathbb T} f(x)\partial_{\alpha_{j}^{\vee}}\bigl(\ol{g(x)}\delta_{k}(x)\bigr) dx.
  \end{align*}
  By linearity this extends to arbitrary $\xi\in \a$ instead of $\alpha_{j}^{\vee}$. As
  \begin{align*}
    \partial_{\xi}\delta_{k}(x) = \delta_{k}(x)\cdot \sum_{\alpha\in R_{+}}k_{\alpha}\langle \alpha, \xi\rangle \cot\frac{\langle \alpha, x\rangle }{2}, 
  \end{align*}
  we obtain
  \begin{align*}
    \langle \partial_{\xi}f, g\rangle_{\delta_k} = -\langle f, \partial_{\xi}g\rangle_{\delta_k} - \sum_{\alpha\in R_{+}} k_{\alpha}\langle \alpha, \xi\rangle \bigg\langle f,\, g\cot\frac{\langle \alpha, \cdot\,\rangle}{2}\bigg\rangle_{\delta_k}. 
  \end{align*}
  Further,
  \begin{align*}
    \langle d_{\alpha}f, g\rangle_{\delta_{k}} = \Big\langle f,\, (1-s_{\alpha})\Bigl(\frac{g}{1-e^{i\alpha}}\Bigr)\Big\rangle_{\delta_k} = \Big\langle f,\, d_{\alpha}g +i g\cot\frac{\langle \alpha,\cdot\,\rangle }{2}\Big\rangle_{\delta_k}.  
  \end{align*}
  Putting things together, we obtain the assertion for $k\geq 1.$ The general case $k\geq 0$ follows by analytic extension.
\end{proof}

On $C^{2}(\mathbb T)$, we consider the (modified) Heckman-Opdam Laplacian
\begin{align}\label{Def_L_k}
  L_{k}\coloneqq \sum_{j=1}^{n} D_{\xi_{j}}^{2} +\,|\rho|^{2} 
\end{align}
with some orthonormal basis $(\xi_{j})_{1\leq j\leq n}$ of $\a$. It is independent of the choice of this basis; indeed, a similar calculation as in \cite{Sch08} shows that
\begin{align*}
  L_{k} = \Delta + \sum_{\alpha\in R_{+}} k_{\alpha}\cot\frac{\langle \alpha, \cdot\rangle}{2}\partial_{\alpha} - \sum_{\alpha\in R_+} k_\alpha \frac{|\alpha|^2} {4\sin^2\tfrac{\langle\alpha, \,\cdot\rangle}{2}}\bigl(1-s_\alpha)
\end{align*}
where $\Delta$ denotes the usual Laplacian on the torus $\mathbb T$.
We split $L_k$ into its differential part and its reflection part, 
\begin{align*}
L_k^{\text{diff}} & = 	\Delta + \sum_{\alpha\in R_{+}} k_{\alpha}\cot\frac{\langle \alpha, \cdot\rangle}{2}\,\partial_{\alpha}\,,\\
L_k^{\text{ref}} &= \sum_{\alpha\in R_+} k_\alpha \frac{|\alpha|^2} {4\sin^2\tfrac{\langle\alpha, \,\cdot\rangle}{2}}\bigl(1-s_\alpha),
\end{align*}
so that $\, L_k = L_k^{\text{diff}} - L_k^{\text{ref}}\,.$ 
The operator $L_k^{\text{diff}}$ generalizes the radial part of the Laplace-Beltrami operator on a Riemannian symmetric space of compact type, which is obtained for particular half-integer values of  $k,$ see \cite[Chap. II, Prop. 3.11]{H00}.

The analysis on the torus $\mathbb T$ with respect to the weight $\delta_k$ is governed by the 
so-called non-symmetric Jacobi polynomials (also called non-symmetric Heckman-Opdam polynomials) associated with $R_+$ and $k.$ For their definition, we recall the usual dominance order on $P,$ which is defined by $\mu\leq \lambda :\Longleftrightarrow \lambda -\mu\in \text{span}_{\mathbb N_0}(R_+)$. 
Following \cite{HO21, Op00}, we define a modified dominance order on $P$ as follows:
\begin{align*}
\,\mu\triangleleft\lambda\,\text{ if either }\mu_+<\lambda_+\text{ or if }\,\mu_+=\lambda_+\,\text{ and }\lambda<\mu.
\end{align*}
 Moreover, $\,\mu\trianglelefteq  \lambda$ if $\mu\triangleleft \lambda $ or $\mu=\lambda.$
Here, $\lambda_{+}$ denotes the unique element from the $W$-orbit $W.\lambda$ of $\lambda$ in $\a$ which is contained in the closed chamber $\weylcham$\,.
The non-symmetric Heckman-Opdam polynomials 
 $\{ E_\lambda\,: \,\lambda\in P\} \subset \mathcal T$ associated with $R_+$ and $k$ are then
uniquely  characterized by the two conditions (c.f. \cite[Sect. 8.3]{HO21}) \begin{enumerate}
  \item[\rm{(i)}] $E_{\lambda} = E_\lambda(k;.\,) = \sum_{\mu \trianglelefteq\lambda} c_{\lambda\mu}e^{i \mu}\,\,$ with $c_{\lambda\mu} = c_{\lambda\mu}(k)\in\mathbb R,\ c_{\lambda\lambda}=1$;
\item[\rm{(ii)}] $\langle E_{\lambda}, e^{i\mu}\rangle_{\delta_k}=0\, $ for all $\mu\in P$ with $\mu \triangleleft \lambda$.
\end{enumerate}
The uniqueness is immediate from the fact that  $\text{span}_{\mathbb R}\{e^{i\mu}: \mu \trianglelefteq\lambda\}$ is a finite-dimensional Hilbert space:  Indeed, the set $\{\mu\in P : \mu \trianglelefteq\lambda\}$ is contained in the convex hull of the $W$-orbit $W.\lambda$ of $\lambda$ in $\frak a,$ see e.g. \cite{RV04}. We also mention that \cite{Op95} uses a slightly different partial order on $P,$ which however leads to the same Jacobi polynomials $E_\lambda$. 

Note that $E_\lambda(-x) = \overline{E_\lambda(x)}$ for $x\in \mathbb T$ and that $E_\lambda(x) = e^{i\langle \lambda, x\rangle}$ if $k=0.$

\begin{remarks}\label{rem:sahi} 1. 
Due to a deep result of Sahi \cite{S00a}, the coefficients $c_{\lambda\mu}(k)$ are rational functions of $k$ with non-negative coefficients. In fact, \cite{S00a} shows this for reduced crystallographic root systems. The same proof works for the only non-reduced irreducible crystallographic root systems, namely those of type  $BC_{n}$, which follows from the recursions established in \cite{S00b}. Hence this property holds  for all irreducible crystallographic root systems. Finally, we may decompose an arbitrary crystallographic root system into a disjoint union of irreducible ones.  The associated Heckman-Opdam polynomials split into products of Heckman-Opdam polynomials with respect to the irreducible subsystems (\cite{B25}). Thus the $c_{\lambda\mu}(k)$ are rational functions of $k$ with non-negative coefficients and in particular, $c_{\lambda\mu}(k)\geq 0$. We also mention Appendix A of the dissertation \cite{B24diss} for an  indepth discussion of these matters.

2. The symmetric (i.e. $W$-invariant) Heckman-Opdam polynomials treated in \cite{HS94} are obtained from the non-symmetric ones by Weyl-group symmetrization. They are indexed by the set of dominant weights $P_+ = P \cap \overline{\mathfrak a_+}$ and given by
$$ P_\lambda(x) = \frac{|W\lambda|}{|W|}\sum_{w\in W} E_\lambda( w^{-1}x), \quad \lambda \in P_+.$$
\end{remarks}

The non-symmetric Heckman-Opdam polynomials are simultaneous eigenfunctions of the Cherednik operators $D_\xi$. More precisely,
by \cite{Op95}  and in view of \eqref{umrechnung}, 
\begin{equation}\label{Cherednik_EF} D_\xi E_\lambda =  i\langle\,\widetilde\lambda,\xi\rangle\, 
E_\lambda \quad (\xi\in \frak a, \, \lambda\in P)\end{equation}
with the shifted spectral variable 
$$ \widetilde\lambda =  \lambda + 
\frac{1}{2} \sum_{\alpha\in R_+} k_\alpha \epsilon(\langle\lambda,\alpha^\vee\rangle) \alpha,\,$$
 where $\epsilon: \mathbb R\to \{\pm 1\}$ is given by 
 $\epsilon(x) = 1$ for $x>0$, $\epsilon(x) = -1$ for $x\leq 0$. 
 Then according to Propos. 2.10. of \cite{Op95}, we have $\widetilde\lambda\in W.(\lambda_++\rho).$
Thus by the definition of the Heckman-Opdam Laplacian,
\begin{equation}\label{eigenvalues_L_k} L_k E_\lambda = -\theta_\lambda E_\lambda
	\end{equation}
with 
$$ \theta_\lambda = |\widetilde\lambda|^2 - |\rho|^2 = |\lambda_+ + \rho|^2 - |\rho|^2 = \langle \lambda_++2\rho, \lambda_+\rangle.$$

Actually, the  set $\{E_\lambda: \lambda \in P\}$ forms an orthogonal basis of $L^2(\mathbb T, \delta_k)$, c.f. \cite[Cor. 2.11]{Op95}. 

\begin{lemma}\label{ev} \begin{enumerate}
\item[\rm{(a)}] For any $x,y\in \overline{\frak a_+}\,,$ we have $\langle x,y\rangle \geq 0.$
\item[\rm{(b)}] $\theta_{\lambda}>0$ for $\lambda\in P\setminus\{0\}$ and $\theta_{0}=0$.
\end{enumerate}
\end{lemma}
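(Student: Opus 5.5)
For part (a), I will use the standard fact that the closed positive chamber $\weylcham$ is contained in its dual cone. Choose a basis $\{\alpha_1,\ldots,\alpha_n\}$ of simple roots of $R_+$; for non-reduced $R$, take the simple system of the reduced subsystem of indivisible roots, which defines the same chamber. Let $\{\omega_1,\ldots,\omega_n\}$ be the dual basis of fundamental coweights, $\langle \omega_i,\alpha_j\rangle=\delta_{ij}$. Then $x\in\weylcham$ if and only if $x=\sum_i c_i\omega_i$ with $c_i=\langle x,\alpha_i\rangle\ge 0$, so $\weylcham$ is the cone generated by the $\omega_i$.

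It therefore suffices to show $\langle \omega_i,\omega_j\rangle\ge 0$. The key input is that the inverse of the Gram matrix $A=(\langle\alpha_i,\alpha_j\rangle)_{i,j}$ has non-negative entries, since $(\langle\omega_i,\omega_j\rangle)=A^{-1}$. This is classical: $A$ is positive definite with non-positive off-diagonal entries, because distinct simple roots form obtuse angles. Hence $A$ is a nonsingular M-matrix, so $A^{-1}\ge 0$ entrywise.

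To keep this self-contained, I will argue directly. Take $x=\omega_j$ and write $x=\sum_k c_k\alpha_k$. Split $x=x^+-x^-$ according to the signs of the $c_k$, with $x^\pm$ non-negative combinations of disjoint sets of simple roots. Then $\langle x^+,x^-\rangle\le 0$ by obtuseness. Moreover, $\langle x,x^-\rangle\ge 0$ because $x\in\weylcham$ and $x^-$ is a non-negative combination of simple roots. Consequently
\[
|x^-|^2=\langle x^+,x^-\rangle-\langle x,x^-\rangle\le 0,
\]
so $x^-=0$. Thus every $\omega_j$ is a non-negative combination of simple roots. Pairing with $\omega_i$, which is non-negative on the simple roots, gives $\langle\omega_i,\omega_j\rangle\ge 0$, and (a) follows by bilinearity. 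This sign-splitting step is the only real obstacle.

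For part (b), I use $\theta_\lambda=\langle\lambda_++2\rho,\lambda_+\rangle=|\lambda_+|^2+2\langle\rho,\lambda_+\rangle$. Since $\rho\in\weylcham$, as noted in the paper, and $\lambda_+\in\weylcham$, part (a) gives $\langle\rho,\lambda_+\rangle\ge 0$. Hence $\theta_\lambda\ge|\lambda_+|^2=|\lambda|^2$, which is strictly positive for $\lambda\ne0$. Finally, $\lambda=0$ gives $\lambda_+=0$ and therefore $\theta_0=0$.
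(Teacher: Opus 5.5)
Your proof is correct and follows the paper's route: write elements of $\overline{\mathfrak a_+}$ in the basis of fundamental (co)weights, use that the simple roots are pairwise obtuse to get $\langle \omega_i,\omega_j\rangle\geq 0$, and then for (b) apply (a) to $\rho,\lambda_+\in\overline{\mathfrak a_+}$ in $\theta_\lambda=|\lambda_+|^2+2\langle\rho,\lambda_+\rangle$. The only difference is that you actually prove the nonnegativity of the dual-basis Gram matrix, which the paper merely asserts; your sign-splitting argument for this is sound.
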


\begin{proof}

(a) Let $\lambda_{1},\ldots, \lambda_{n}$ denote the fundamental dominant weights relative  to the positive simple roots $\alpha_{1},\ldots, \alpha_{n}\in R_+$, which are characterized by $\langle \lambda_{i}, \alpha_{j}^{\vee}\rangle =\delta_{ij}$. Then $ x = \sum_{i=1}^n c_i\lambda_i\,$ with $c_i =\langle x, \alpha_i^\vee\rangle \geq 0,$ and similar for $y.$ But as $\langle\alpha_i,\alpha_j\rangle \leq 0,$ the elements of the dual basis satisfy $\langle \lambda_i,\lambda_j\rangle \geq  0$. This implies part (a). 

For part (b), note that $\lambda_+\in \overline{\frak a_+}.$ Moreover,  it is  known that   $\rho\in\weylcham\,$, see e.g. Section 2 of \cite{RKV13}. Thus by part (a),
  \begin{align*}
    \langle \lambda_{+}+2\rho, \lambda_{+}\rangle = \langle\lambda_{+}, \lambda_{+}\rangle + \langle 2\rho, \lambda_{+}\rangle \geq 0.
  \end{align*}
  Furthermore, $\theta_{\lambda}=0$ implies that $ \lambda_{+} = 0,$ since $\langle 2\rho, \lambda_{+}\rangle \geq 0$.
\end{proof}

It will be convenient to renormalize the non-symmetric Heckman-Opdam polynomials according to
\begin{align*}
  R_{\lambda} \coloneqq \frac{1}{E_{\lambda}(0)}E_{\lambda} \quad\text{and define}\quad r_{\lambda}\coloneqq (\|R_{\lambda}\|_{L^{2}(\T, \delta_{k})}^{2})^{-1}. 
\end{align*}

Be aware that in \cite{RR11} the $R_{\lambda}$ denote the $W$-invariant renormalized Jacobi polynomials. Note also that 
$$ 	
R_{\lambda}=\sum_{\mu\trianglelefteq\lambda}a_{\lambda\mu}e^{i\mu} \quad \text{  with }\,a_{\lambda\mu}\geq 0\ \text{ and } \sum_{\mu\trianglelefteq\lambda}a_{\lambda\mu} = 1 $$  as a consequence of \cref{rem:sahi}.  As $|\mu|\leq |\lambda|$ for $\mu \trianglelefteq \lambda$, the $R_\lambda$ therefore satisfy the estimate 
\begin{align}\label{estimate_R} |R_\lambda(z)| \leq e^{|\lambda||\text{Im}(z)|} \quad \text{ for } z\in \frak a_{\mathbb C}.\end{align}
In particular, $\|R_{\lambda}\|_{\infty, \T}=1$. The above convex representation also shows that  all derivatives of $R_\lambda$ on $\mathbb T$
are of polynomial growth in $|\lambda|.$ More precisely,
\begin{equation}\label{R_ableitung} \|\partial^\alpha R_\lambda\|_{\infty, \mathbb T} \, \leq |\lambda|^{|\alpha|} \quad (\alpha \in \mathbb N_0^n) .\end{equation}

There is a natural generalization of the classical Fourier transform on $\mathbb T$ in the Heckman-Opdam setting, namely 
$$ \widehat f^{\,k}(\lambda) := \int_{\mathbb T} f(x)\overline{R_\lambda(x)}\delta_k(x)dx \quad \text{ for } f\in L^1(\mathbb T, \delta_k) \text{ and } \lambda \in P.$$
If $f\in C^1(\mathbb T),$ then by the anti-symmetry of the Cherednik operators $D_\xi$  together with the eigenvalue equation \eqref{Cherednik_EF}, we obtain
\begin{equation}\label{Cherednik_FT} (D_\xi f)^{\wedge k}(\lambda) = i\langle\widetilde\lambda,\xi\rangle\, \widehat f^{\,k}(\lambda) \quad (\xi \in \mathfrak a).\end{equation}

\section{The non-symmetric Heat Semigroup}\label{sec:heat}

We  introduce a non-symmetric analogue of the heat semigroup studied in \cite{RR11} by removing the restriction to $W$-invariant functions. Note that the scaling of the root system and the multiplicity function there differed by a factor $2$ from our present notation, c.f. Remark 2.3. of 
loc.~cit.

\begin{definition}
  The (non-symmetric) heat kernel $\Gamma_{k}$ on $\T$  is defined by
  \begin{align*}
    \Gamma_{k}(t, x, y)\coloneqq \sum_{\lambda\in\wtlatt} r_{\lambda}e^{-\theta_{\lambda}t} R_{\lambda}(x)R_{\lambda}(-y)
     \quad  (t\in\, ]0,\infty[, x,y\in \mathbb T).
  \end{align*}
\end{definition}

We proceed as in \cite{RR11} to prove convergence of this series.  Recall that each $\lambda\in P$ is of the form $\lambda=w\lambda_+$ with some $w\in W.$
\begin{lemma}
  There exists a constant $C>0$ such that for all $\lambda\in P_+$ and $w\in W,$ 
  \begin{align*}
    |r_{w\lambda}| \leq\, C\cdot\prod_{\alpha\in R_{+}, \lambda_{\alpha}\neq 0} \lambda_{\alpha}^{2k_{\alpha}}. 
  \end{align*}
  Here, $\lambda_{\alpha}\coloneqq \langle \lambda, \alpha^{\vee}\rangle\geq 0.$
\end{lemma}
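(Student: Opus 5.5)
The estimate rests on the explicit norm formula for the non-symmetric polynomials due to Opdam \cite{Op95} (see also \cite[Sect.~8]{HO21}, \cite{Ch91}). That formula expresses $\|E_\lambda\|^2_{\delta_k}/|E_\lambda(0)|^2$, i.e.\ $r_\lambda^{-1}$ up to the constant $\|1\|^2_{\delta_k}$, as a finite product over $\alpha\in R_+$. Each factor is a ratio of Gamma functions evaluated at arguments of the form $\langle\widetilde\lambda,\alpha^\vee\rangle + c\,k_\alpha$ (and $k_{\alpha/2}$ shifts in the non-reduced case). The shifts $c$ depend only on the sign $\epsilon(\langle\lambda,\alpha^\vee\rangle)$, hence only on the chamber containing $w$ relative to $\lambda$. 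Passing to the real torus $\mathbb T$ via \eqref{umrechnung} does not affect these constants. So the first step is to write $r_{w\lambda}$ for $\lambda\in P_+$ and $w\in W$ as
\[
r_{w\lambda} = c_k\prod_{\alpha\in R_+} \frac{\Gamma(a_\alpha+b_\alpha)\,\Gamma(a_\alpha+b'_\alpha)}{\Gamma(a_\alpha+b''_\alpha)\,\Gamma(a_\alpha+b'''_\alpha)},\qquad a_\alpha = \langle \lambda+\rho,\alpha^\vee\rangle \ \text{or a similar affine expression in } \lambda_\alpha,
\]
using $\widetilde{w\lambda}\in W.(\lambda+\rho)$ from Propos.~2.10 of \cite{Op95}. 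The shifts $b,\dots,b'''$ are bounded and take only finitely many values as $w$ ranges over $W$. The net shift in each factor, numerator shifts minus denominator shifts, equals $2k_\alpha$, possibly with contributions from $k_{\alpha/2}$ and $k_{2\alpha}$ that are regrouped by root strings.

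Second, I estimate each factor separately in two regimes. When $\lambda_\alpha\neq0$, we have $a_\alpha\ge c>0$ comparable to $\lambda_\alpha+1\asymp\lambda_\alpha$, because $\lambda_\alpha\ge1$ is an integer. The standard asymptotics $\Gamma(x+a)/\Gamma(x+b)\asymp x^{a-b}$, uniform for $x\ge x_0>0$ and $a,b$ in a compact set, then bound the factor by $C\lambda_\alpha^{2k_\alpha}$. When $\lambda_\alpha=0$, the arguments lie in a finite set determined by $k$, the Weyl group and $\langle\rho,\alpha^\vee\rangle$, so the factor is a constant. The case $\langle\rho,\alpha^\vee\rangle$ small needs care: I must check that no argument of a denominator Gamma is a pole and that numerator arguments are positive. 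For $k\ge0$ this holds because the shifts are nonnegative combinations of the $k$'s. Taking the product over $\alpha\in R_+$ and the maximum over the finitely many $w\in W$ gives the claimed constant $C$.

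The main obstacle is the bookkeeping in the norm formula: keeping track of the $w$-dependent shifts, together with the non-reduced case where $\alpha,2\alpha\in R$. For the latter I would group $\alpha$ with $2\alpha$ and note that $\lambda_{2\alpha}=\lambda_\alpha/2$, so both exponents combine into powers of $\lambda_\alpha$. If no explicit non-symmetric formula is convenient, the fallback is to relate $r_{w\lambda}$ to the symmetric $r_\lambda$ for $P_\lambda$ from \cite{RR11, HO21}. The ratio $\|E_{w\lambda}\|^2/\|E_\lambda\|^2$ is a product of factors $\frac{(\langle\widetilde\lambda,\alpha^\vee\rangle)^2-k_\alpha^2}{(\langle\widetilde\lambda,\alpha^\vee\rangle)^2}$ or its inverse, which are uniformly bounded, and $E_\lambda(0)$ relates to $P_\lambda(0)$ similarly. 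Then I would invoke the known polynomial bound for the symmetric case from \cite{RR11}.
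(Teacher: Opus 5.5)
Your plan is essentially the paper's proof. The paper takes the explicit formulas for $E_{w\lambda}(0)$ and $\|E_{w\lambda}\|_{2,\delta_k}^2$ from \cite[Ch.~8]{HO21}, writes $r_{w\lambda}=\widetilde c_{w_0}(\rho)^2\prod_{\alpha\in R_+}f_\alpha(\lambda_\alpha)$ with each $f_\alpha$ a ratio of four Gamma functions at $\lambda_\alpha+\langle\rho,\alpha^\vee\rangle$ plus bounded shifts of net value $2k_\alpha$, and concludes from $\Gamma(z+a)/\Gamma(z+b)\sim z^{a-b}$ and the finiteness of $W$, as you propose. One correction to your pole check: the shifts are not all nonnegative (they include $-\tfrac12 k_{\alpha/2}$ and $-\tfrac12 k_{\alpha/2}-k_\alpha$), so what keeps the Gamma arguments $\ge \lambda_\alpha+\delta_{ww_\lambda}(\alpha)$, and hence away from the poles when $\lambda_\alpha\ge 1$, is instead the inequality $\langle\rho,\alpha^\vee\rangle\ge k_\alpha+\tfrac12 k_{\alpha/2}$.
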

\begin{proof}
  We need the $L^{2}$-norms as well as the evaluations at $0$ of the (non-symmetric) Jacobi polynomials, which  are explicitly known (see \cite{HO21}, Chapter 8). In fact, consider $\lambda\in\domwt$. Then for $w\in W$,
\begin{align*}
  E_{w\lambda}(0) = \frac{\wt c_{w_{0}}(\rho)}{\wt c_{w w_{\lambda}}(\lambda+\rho)} > 0
\end{align*}
where
\begin{align*}
  \wt c_{w}(\lambda) \coloneqq \prod_{\alpha\in R_{+}} \frac{\Gamma(\lambda_\alpha +\frac 1 2 k_{\alpha/2}+\delta_{w}(\alpha))}{\Gamma(\lambda_\alpha +\frac 1 2 k_{\alpha/2}+k_{\alpha}+\delta_{w}(\alpha))}\quad\text{with}\quad \delta_{w}(\alpha) \coloneqq\begin{cases}
    0 &\text{if}\ w\alpha\in R_{+},\\ 1 &\text{otherwise}.
  \end{cases}
\end{align*}
Here, $w_{0}$ denotes the longest element in $W$ and $w_{\lambda}$ the longest element in the isotropy group $W_{\lambda}\subseteq W$ of $\lambda$.
Furthermore,
\begin{align*}
  \|E_{w\lambda}\|_{2,\delta_{k}}^{2} = \frac{c^{*}_{w w_{\lambda}}(-\lambda-\rho)}{\wt c_{w w_{\lambda}}(\lambda+\rho)},
\end{align*}
where
\begin{align*}
  c^{*}_{w}(\lambda)\coloneqq\prod_{\alpha\in R_{+}} \frac{\Gamma(-\lambda_\alpha -\frac 1 2 k_{\alpha/2}-k_{\alpha}+\delta_{w}(\alpha))}{\Gamma(-\lambda_\alpha - \frac 1 2 k_{\alpha/2}+\delta_{w}(\alpha))}.
\end{align*}

  We thus calculate 
  \begin{align*}
    r_{w\lambda} &= \frac{E_{w\lambda}(0)^{2}}{\|E_{w\lambda}\|_{2,\delta_{k}}^{2}} =  \frac{\wt c_{w_{0}}(\rho)^{2}}{\wt c_{ww_{\lambda}}(\lambda+\rho)\, c^{*}_{ww_{\lambda}}(-\lambda-\rho)} = \wt c_{w_{0}}(\rho)^{2} \prod_{\alpha\in R_{+}} f_{\alpha}(\lambda_{\alpha})
  \end{align*}
  with
  \begin{align*}
    f_{\alpha}(\lambda_{\alpha}) = \frac{\Gamma(\lambda_{\alpha}+\rho_{\alpha}+\frac 1 2 k_{\alpha/2}+k_{\alpha}+\delta_{ww_{\lambda}}(\alpha))\, \Gamma(\lambda_{\alpha}+\rho_{\alpha}-\frac 1 2 k_{\alpha/2}+\delta_{ww_{\lambda}}(\alpha))}{\Gamma(\lambda_{\alpha}+\rho_{\alpha}+\frac 1 2 k_{\alpha/2}+\delta_{ww_{\lambda}}(\alpha))\, \Gamma(\lambda_{\alpha}+\rho_{\alpha}-\frac 1 2 k_{\alpha/2}-k_{\alpha}+\delta_{ww_{\lambda}}(\alpha))}.
  \end{align*}
  Using the asymptotics
  \begin{align*}
    \frac{\Gamma(z+a)}{\Gamma(z+b)}\sim z^{a-b}\quad\text{for}\,\, z\to\infty, 
  \end{align*}
  we thus get
  \begin{align*}
    f_{\alpha}(\lambda_{\alpha})\sim \lambda_{\alpha}^{2k_{\alpha}}.
  \end{align*}
  as  $\lambda_{\alpha}\to\infty$. We now continue as in \cite{RR11} to conclude the proof.
  \end{proof}

We thus obtain

\begin{proposition}
  The series defining $\Gamma_k$ converges absolutely on $\opint\times \a\times \a$ and uniformly on each subset $[\delta,\infty[\times \frak a\times \frak a$  with $\delta >0.$ The long-time behaviour of $\Gamma_{k}$ is given by
  \begin{align*}
    \lim_{t\to\infty} \Gamma_{k}(t, x, y) = r_{0} = \frac{1}{\,\int_{\T}\delta_{k}(\xi)d\xi\,}\,. 
  \end{align*}
\end{proposition}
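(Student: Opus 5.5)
The plan is to dominate each term of the series by a summable sequence that does not depend on $x,y$, and then to apply the Weierstrass M-test. For real $x,y$ the estimate \eqref{estimate_R} gives $|R_\lambda(x)|\le 1$ and $|R_\lambda(-y)|\le 1$. Also $r_\lambda>0$, since $r_\lambda$ is the inverse of a squared norm. So for $t\ge\delta$ the term indexed by $\lambda$ is bounded in absolute value by $r_\lambda e^{-\theta_\lambda\delta}$. It then suffices to prove that
\[
\sum_{\lambda\in P} r_\lambda e^{-\theta_\lambda \delta}<\infty .
\]
Absolute convergence on $\opint\times\a\times\a$ and uniform convergence on $[\delta,\infty[\times\a\times\a$ both follow from this.

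To establish summability, I write each $\lambda\in P$ as $w\mu$ with $\mu=\lambda_+\in P_+$ and $w\in W$. Every $\mu$ arises at most $|W|$ times in this way, and $\theta_{w\mu}=\theta_\mu$. The preceding lemma gives
\[
r_{w\mu}\le C\prod_{\alpha\in R_+,\,\mu_\alpha\ne0}\mu_\alpha^{2k_\alpha}\le C'(1+|\mu|)^{N}
\]
for some $N$, because $\mu_\alpha\le |\mu||\alpha^\vee|$. For the eigenvalue, \cref{ev}(a) gives $\langle 2\rho,\mu\rangle\ge 0$, so $\theta_\mu=|\mu|^2+\langle 2\rho,\mu\rangle\ge|\mu|^2$. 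The sum is therefore bounded by
\[
|W|\,C'\sum_{\mu\in P}(1+|\mu|)^N e^{-\delta|\mu|^2}.
\]
This is finite because $P$ is a lattice in $\a$: the number of lattice points with $|\mu|\le m$ grows only polynomially in $m$. The only substantive input here is the polynomial bound on $r_\lambda$ from the lemma, and that is already available. The remaining work is bookkeeping.

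For the long-time limit, I split off the term $\lambda=0$. Since $E_0=1$, we have $R_0=1$ and $r_0=\|1\|_{2,\delta_k}^{-2}=1/\int_\T\delta_k$, with $\theta_0=0$, so this term equals $r_0$ for every $t$. For $t\ge1$ and $\lambda\ne0$, \cref{ev}(b) gives $\theta_\lambda>0$. The remainder is then bounded by $\sum_{\lambda\neq0} r_\lambda e^{-\theta_\lambda t}$. Each summand tends to $0$ as $t\to\infty$, and each is dominated by $r_\lambda e^{-\theta_\lambda}$, which is summable by the first part. Dominated convergence for series then gives that the remainder tends to $0$, uniformly in $x,y$. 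This yields the stated limit $r_0$.
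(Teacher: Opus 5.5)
Your proposal is correct and follows essentially the same route as the paper, which proves the polynomial bound on $r_{w\lambda}$ and then defers the remaining steps to \cite{RR11}. Those steps are the ones you carry out: $|R_\lambda|\le 1$ on $\a$, $\theta_\lambda\ge|\lambda|^2$ and a Weierstrass M-test over the lattice for convergence, and isolating the $\lambda=0$ term (with $R_0=1$, $\theta_0=0$) for the long-time limit.
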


Whenever convenient, we shall identify $\Gamma_{k}$ with a function on $\opint\times \a\times \a\,$ which is $2\pi \crtlatt$-periodic in $x$ and $y$. In view of estimate \eqref{estimate_R} for the polynomials $R_\lambda$ and with the same argument as in \cite[Prop. 3.5]{RR11} we obtain 

\begin{proposition}
  For fixed $t>0$ the heat kernel $\Gamma_{k}(t, \cdot, \cdot)$ extends to a holomorphic function on $\a_{\C}\times \a_{\C}$ which is $2\pi \crtlatt$-periodic in the real part of both arguments, namely
  \begin{align*}
    \Gamma_{k}(t, z, w) = \sum_{\lambda\in\wtlatt}r_{\lambda}e^{-\theta_{\lambda}t}R_{\lambda}(z)R_{\lambda}(-w). 
  \end{align*}
  In particular, $\Gamma_{k}\in C^{\infty}(\opint\times \T\times \T)$.
\end{proposition}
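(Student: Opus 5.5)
The plan is to show that the series converges locally uniformly on $\a_{\C}\times\a_{\C}$. Holomorphy of the limit then follows from Weierstrass' theorem, since each term $R_\lambda(z)R_\lambda(-w)$ is a finite linear combination of exponentials $e^{i\langle\mu,z\rangle}e^{-i\langle\nu,w\rangle}$ and hence entire.

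\emph{Step 1: bound on a single term.} Fix $t>0$ and a compact set $K\subseteq\a_{\C}\times\a_{\C}$, and choose $M>0$ with $|\Im(z)|,|\Im(w)|\leq M$ for all $(z,w)\in K$. By the estimate \eqref{estimate_R}, every term satisfies
\[
\bigl|r_\lambda e^{-\theta_\lambda t}R_\lambda(z)R_\lambda(-w)\bigr|\leq |r_\lambda|\,e^{-\theta_\lambda t}\,e^{2M|\lambda|}.
\]

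\emph{Step 2: summability.} By the preceding lemma, $|r_{w\lambda}|$ is bounded by a polynomial in $\lambda_+$. Moreover $\theta_\lambda=\langle\lambda_++2\rho,\lambda_+\rangle\geq|\lambda_+|^2=|\lambda|^2$, where the inequality uses \cref{ev}(a) to get $\langle 2\rho,\lambda_+\rangle\geq 0$. Hence the term is dominated by
\[
C\,(1+|\lambda|)^N e^{-t|\lambda|^2+2M|\lambda|}.
\]
This is summable over the lattice $P$, because the Gaussian factor beats both the exponential and the polynomial growth. Every $\lambda$ with $|\lambda|$ in a fixed range has $\lambda_+$ in that range, and the number of lattice points grows only polynomially. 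By the Weierstrass M-test, the series converges absolutely and uniformly on $K$. So $\Gamma_k(t,\cdot,\cdot)$ is holomorphic on $\a_{\C}\times\a_{\C}$. It is $2\pi\crtlatt$-periodic in the real parts, since each $e^{i\mu}$ with $\mu\in P$ is, because $\langle\mu,\crtlatt\rangle\subseteq\Z$.

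\emph{Step 3: joint smoothness.} To get $\Gamma_k\in C^\infty(\opint\times\T\times\T)$, I will differentiate termwise on $[\delta,\infty[\times\T\times\T$. A $t$-derivative of order $j$ produces a factor $\theta_\lambda^j$. By \eqref{R_ableitung}, a spatial derivative of multi-order $\alpha$ produces at most $|\lambda|^{|\alpha|}$. Both factors are polynomial in $|\lambda|$, since $\theta_\lambda\leq|\lambda|^2+2|\rho||\lambda|$. The same dominating series, with an enlarged power $N$ and $M=0$, therefore converges uniformly for $t\geq\delta$. Uniform convergence of all termwise derivatives then gives joint smoothness. Alternatively, joint smoothness in $(x,y)$ already follows from holomorphy, and only the $t$-derivatives need this argument.

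\emph{Main obstacle.} The only delicate point is the uniform control of $|r_\lambda|$ over all of $P$, including the walls where some $\lambda_\alpha=0$. This is exactly what the preceding lemma provides: its bound holds with a single constant $C$ for all $w\in W$. With that lemma available, the rest is the routine M-test argument carried out as in \cite[Prop.~3.5]{RR11}.
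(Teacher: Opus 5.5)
Your proposal is correct and follows essentially the same route as the paper. The paper's proof consists of invoking estimate \eqref{estimate_R} together with the polynomial bound on $r_\lambda$ and the Gaussian decay of $e^{-\theta_\lambda t}$, in a Weierstrass M-test argument as in \cite[Prop.~3.5]{RR11}; your Step 3 additionally spells out the termwise differentiation via \eqref{R_ableitung}, which the paper leaves implicit.
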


\begin{lemma}
  \begin{enumerate}
    \item For fixed $w\in\a_{\C}$ the function $g(x, t)\coloneqq\Gamma_{k}(t, x, w)$ is a solution to the heat equation $\,L_{k}g=\partial_{t}g\text{ on }\T\times\opint.$
\item  $\displaystyle\int_{\T} \Gamma_{k}(t, z, y)\, \delta_{k}(y)\, dy = 1\,$ for all $z\in \a_{\C}$.
\item  $\displaystyle\Gamma_{k}(t+s, z, w)=\int_{\T}\Gamma_{k}(t, z, y)\Gamma_{k}(s, y, w)\,\delta_{k}(y)\, dy\, $ for all $z, w\in\a_{\C}$.
\item $\displaystyle\int_{\T}\Gamma_{k}(t, z, y) R_{\lambda}(y)\,\delta_{k}(y)\, dy = e^{-\theta_{\lambda}t}R_{\lambda}(z)\,$ for all $z\in\a_{\C}$.
  \end{enumerate}
\end{lemma}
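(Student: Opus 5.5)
All four statements will be proved by termwise manipulation of the series
$$\Gamma_k(t,z,w)=\sum_{\lambda\in P} r_\lambda e^{-\theta_\lambda t}R_\lambda(z)R_\lambda(-w).$$
Two inputs make this legitimate. First, the polynomial bound on $r_\lambda$ from the preceding lemma. Second, the estimates \eqref{estimate_R} and \eqref{R_ableitung}. Together they give locally uniform absolute convergence in $(t,z,w)$ for $t\ge\delta>0$, $z,w$ in compacta of $\frak a_{\C}$, and the same for all termwise derivatives in $t$ and in $x\in\frak a$. Indeed, each derivative only adds a factor polynomial in $|\lambda|$ or in $\theta_\lambda$. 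This is beaten by $e^{-\theta_\lambda\delta}$ together with $e^{|\lambda|(|\mathrm{Im} z|+|\mathrm{Im} w|)}$, because $\theta_\lambda\ge |\lambda|^2$ by \cref{ev}(a) (as $\langle 2\rho,\lambda_+\rangle\ge 0$ and $|\lambda_+|=|\lambda|$). The number of $\lambda$ with $|\lambda|\le N$ grows only polynomially.

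For (1), I will apply $L_k$ termwise in $x$ for fixed $w$. This is allowed because $L_k=\sum_j D_{\xi_j}^2+|\rho|^2$, each $D_\xi$ is continuous $C^m(\T)\to C^{m-1}(\T)$ by \cref{diff_properties}, and the series converges in $C^2(\T)$ by the derivative bounds. Then \eqref{eigenvalues_L_k} gives $L_kR_\lambda=-\theta_\lambda R_\lambda$. Since $\partial_t e^{-\theta_\lambda t}=-\theta_\lambda e^{-\theta_\lambda t}$, both sides of the heat equation agree term by term.

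For (4), I will integrate termwise against $R_\lambda(y)\delta_k(y)$, using uniform convergence in $y\in\T$ for fixed $z$. Write $R_\mu(-y)=\overline{R_\mu(y)}$; this holds because the coefficients $a_{\mu\nu}$ are real. By orthogonality of $\{E_\mu\}$,
$$\int_\T R_\mu(-y)R_\lambda(y)\delta_k(y)\,dy=\langle R_\lambda,R_\mu\rangle_{\delta_k}=\delta_{\lambda\mu}\,r_\lambda^{-1}.$$
Hence only the $\mu=\lambda$ term survives and the integral equals $e^{-\theta_\lambda t}R_\lambda(z)$. Statement (2) is the case $\lambda=0$ of (4), since $R_0=1$ and $\theta_0=0$. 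Statement (3) follows by inserting the series for $\Gamma_k(s,y,w)$ in $y$, which converges uniformly on $\T$. Each term $r_\mu e^{-\theta_\mu s}R_\mu(y)R_\mu(-w)$ is then treated by (4), and resumming gives the series for $\Gamma_k(t+s,z,w)$.

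The only real obstacle is justifying the interchanges. The delicate one is in (1), where the reflection terms in $D_\xi$ must pass through the infinite sum. I would handle this via \cref{diff_properties}: show that the partial sums converge in $C^2(\T)$ (in $x$, with complex $w$ fixed), using \eqref{R_ableitung} and the bound $|R_\lambda(-w)|\le e^{|\lambda||\mathrm{Im} w|}$. Continuity of $D_\xi$ from $C^2$ to $C^1$ and then from $C^1$ to $C^0$ then gives termwise application of $L_k$.
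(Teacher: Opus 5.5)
Your proposal is correct and is essentially the argument the paper intends: the lemma is stated without proof, as an immediate consequence of the series representation of $\Gamma_k$ (convergence controlled by the polynomial bound on $r_\lambda$ and estimate \eqref{estimate_R}), the eigenvalue equation $L_kR_\lambda=-\theta_\lambda R_\lambda$, and orthogonality of the $R_\lambda$, exactly as in the symmetric case of Remling--R\"osler. Your justification of the interchanges fills in details the paper leaves implicit; in particular, passing $L_k$ through the sum via $C^2(\T)$-convergence and \cref{diff_properties} is sound.
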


\begin{definition}
  For $f\in L^{1}(\T,\,\delta_{k})$ we define
  \begin{align*}
    H(t)f(x) \coloneqq\begin{cases}
      \int_{\T}\Gamma_{k}(t, x, y)f(y)\,\delta_{k}(y)\, dy &\text{ for}\ t>0,\\
      f(x)&\text{ for}\ t=0.
    \end{cases}
  \end{align*}
\end{definition}

We obtain the following result extending \cite{RR11}:
\begin{theorem}
  The family $(H(t))_{t\geq 0}$ is a Feller-Markov semigroup on $(C(\T),\,\|\cdot\|_{\infty})$, i.e. it is a contractive and positivity-preserving strongly continuous operator semigroup.   Its generator is given by the closure $\ol L_{k}$ of the operator $L_{k}$ with domain $\mathcal D(L_k)=\mathcal T.$ 

We call $(H(t))_{t\geq 0}$ the \emph{(non-symmetric) Heckman-Opdam heat semigroup on $\T$}.
\end{theorem}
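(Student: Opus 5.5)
Our plan is to follow the strategy of \cite{RR11}, replacing $W$-invariant functions by arbitrary ones. The semigroup law is immediate from part (3) of the preceding lemma and Fubini, and $H(t)\mathbf 1=\mathbf 1$ follows from part (2). By part (4), $H(t)$ acts diagonally on $\mathcal T$ via $H(t)R_\lambda=e^{-\theta_\lambda t}R_\lambda$. Hence on $\mathcal T$ the map $t\mapsto H(t)f$ is differentiable in sup-norm with derivative $L_kH(t)f$, by \eqref{eigenvalues_L_k}, and $H(t)f\to f$ uniformly as $t\to 0$. Since the kernel is bounded for each $t>0$, $H(t)$ maps $C(\T)$ into $C(\T)$.

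\textbf{Positivity.} This is the main step. Once $\Gamma_k\ge0$ is known, contractivity $\|H(t)f\|_\infty\le\|f\|_\infty$ follows from (2), and strong continuity on $C(\T)$ follows by density of $\mathcal T$ (Stone--Weierstrass) together with the uniform bound $\|H(t)\|\le1$.

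To prove positivity, we would show that $L_k$ with domain $\mathcal T$ satisfies the positive maximum principle. Let $f\in\mathcal T$ be real-valued and attain its maximum at $x_0$. For the differential part, $\Delta f(x_0)\le0$ and $\nabla f(x_0)=0$. The latter kills the first-order terms $k_\alpha\cot\frac{\langle\alpha,x_0\rangle}{2}\partial_\alpha f(x_0)$, provided $\sin\frac{\langle\alpha,x_0\rangle}{2}\neq0$. For the reflection part, $f(x_0)-f(s_\alpha x_0)\ge0$ and $k_\alpha\ge0$, so $-L_k^{\mathrm{ref}}f(x_0)\le0$. Thus $L_kf(x_0)\le0$ whenever $x_0$ lies off the singular hyperplanes $\langle\alpha,x\rangle\in2\pi\Z$.

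On a hyperplane we use the singular-term formula from \cref{diff_properties}: the combination $\cot\frac{\langle\alpha,x\rangle}{2}\partial_\alpha f-\frac{|\alpha|^2}{4\sin^2}(f-s_\alpha f)$ extends continuously. At $x_0$ on $\langle\alpha,x\rangle=2\pi m$, a Taylor expansion in the direction $\alpha$ shows that its limit equals a nonnegative multiple of $\partial_\alpha^2 f(x_0)$, which is $\le0$ at a maximum. We expect this boundary case to be the delicate point and would carry out the expansion explicitly: writing $u=\langle\alpha,x\rangle-2\pi m$, one has $f(x)-f(s_\alpha x)=u\,\partial_{\alpha^\vee}f+O(u^3)$ along the line, and the leading terms combine to give $\tfrac{|\alpha|^2}{2}\,\partial^2_{\alpha^\vee}$-type contributions.

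Given the positive maximum principle, together with $\mathcal T$ being dense and invariant under $H(t)$, we apply the Hille--Yosida--Ray theorem. Since $(\mu-L_k)\mathcal T=\mathcal T$ for $\mu>0$ (because $\theta_\lambda\ge0$ by \cref{ev}), the closure $\ol L_k$ generates a Feller semigroup, and this semigroup agrees with $H(t)$ on $\mathcal T$. Hence $H(t)$ is positive and contractive, and its generator is $\ol L_k$; the latter also follows because $\mathcal T$ is an invariant core.

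If the maximum-principle route at the hyperplanes proves awkward, the fallback is an analytic-continuation argument in $k$ as in \cite{RR11}. Alternatively, one could reduce to the symmetric case through the relation between $L_k$ and $L_k^{\mathrm{diff}}$ on $W$-invariant functions.
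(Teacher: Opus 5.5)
Your proposal is correct and follows essentially the paper's route. The paper also verifies the hypotheses of \cite[Ch.~4, Thm.~2.2]{EK86} for $(L_k,\mathcal T)$: dense range of $\mu-L_k$ via $(\mu-L_k)R_\lambda=(\mu+\theta_\lambda)R_\lambda$, and the positive maximum principle, where at a point with $\langle\alpha,x_0\rangle\in 2\pi\Z$ a Taylor expansion using $2\pi Q^\vee$-periodicity shows the singular $\alpha$-terms tend to $\frac{2}{|\alpha|^2}\partial_\alpha^2 f(x_0)=\frac{|\alpha|^2}{2}\partial_{\alpha^\vee}^2 f(x_0)\le 0$; it then identifies the resulting Feller semigroup with $H(t)$ on $\mathcal T$. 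The only hypothesis you leave implicit is the paper's condition (i), $L_k\overline f=\overline{L_kf}$ for $f\in\mathcal T$, which lets one pass between real- and complex-valued functions and is immediate because $L_k$ has real coefficients.
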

\begin{proof}
  The proof is essentially the same as in \cite{RR11}, Thm. 3.9. There is, however, a slight inaccuracy in the proof of the positive maximum principle, which we fix here. According to \cite[Ch. 4, Thm. 2.2]{EK86}, we have to verify the following conditions:
  \begin{enumerate}\itemsep=-2pt
  \item[\rm{(i)}] 	If $f\in \mathcal D(L_k)$, then also $\overline f\in \mathcal D(L_k),$ and $L_k\overline f = \overline{L_k f}.$
\item[\rm{(ii)}] There exists some $t>0$ such that the range of $\,t-L_k$ is dense in $C(\mathbb T).$
 \item[\rm{(iii)}] (Positive maximum principle): Let $f\in\mathcal T$ be real-valued with a non-negative maximum attained at $x_{0}\in\T$. Then $L_{k}f(x_{0})\leq 0$.
 \end{enumerate}
  Condition (i) is obvious and (ii) is immediate from $(t-L_k)R_\lambda = (t+\theta_\lambda) R_\lambda.$
  Condition (iii) requires  closer inspection. We consider $f$ as a $2\pi \crtlatt$-periodic function on $\a$.  If $x_{0}\in \frak a$ is regular as an element of $\mathbb T$, i.e. $\langle \alpha, x_{0}\rangle \notin 2\pi\Z$ for all $\alpha\in R$, then the statement is obvious, since
  \begin{align*}
    L_{k}f(x_{0}) \leq L_{k}^{\operatorname{diff}} f(x_{0}) = \Delta f(x_{0})\leq 0. 
  \end{align*}
  Suppose now  $\langle \alpha, x_{0}\rangle = 2\pi k$ for some $k\in\Z$ and $\alpha\in R_{+}$. Let $x\in\a$ be close to $x_{0}$ and 
  regular. We make a  Taylor expansion at $x-2\pi k\alpha^{\vee}, $ keeping in mind the periodocity of $f$ and its derivatives. This yields 
  \begin{align*}
    f(s_{\alpha}x)-f(x) & =\,f(s_{\alpha}x)- f(x-2\pi k\alpha^{\vee})\\
                       &=\,  \big( 2\pi k-\langle \alpha, x\rangle \big)\partial_{\alpha^{\vee}}f(x) + \frac{1}{2} \big(2\pi k - \langle \alpha, x\rangle\big)^{2} (\alpha^\vee)^{T} D^{2}f(z_x)\alpha^\vee
                       \end{align*}
 with some $z_x$ on the line segment $[x-2\pi k\alpha^{\vee}, s_{\alpha}x]$.
  We thus obtain
  \begin{align*}
     g_\alpha(x) &:=  \cot\frac{\langle \alpha, x\rangle}{2}\partial_{\alpha}f(x) - \frac{|\alpha|^{2}}{4\sin^{2}\frac{\langle \alpha, x\rangle }{2}}\bigl(f(x)-f(s_{\alpha}x)\bigr) \\
     & = \Bigl( \cot\frac{\langle\alpha, x\rangle}{2} + \frac{2\pi k -\langle \alpha, x\rangle}{2\sin^2 
     \frac{\langle\alpha, x\rangle}{2}} \Bigr)\cdot\partial_\alpha f(x) 
      +\frac{1}{2|\alpha|^{2}}\frac{\bigl(2\pi k - \langle \alpha, x\rangle\bigr)^2}{\sin^{2}\frac{\langle \alpha, x\rangle }{2}}\alpha^{T}D^{2}\!f(z_x)\alpha \,\\
      &  =: \varphi_1(x)\partial_\alpha f(x) + \varphi_2(x)\,\alpha^{T}D^{2}\!f(z_x)\alpha     \end{align*}
    As $x\to x_0$, we have  $\langle\alpha, x\rangle \to 2\pi k\,, \varphi_1(x) \to 0$ and $\,\displaystyle \varphi_2(x) \to 2/|\alpha|^2.$
    Moreover, $s_\alpha x\to x_{0}-2\pi k\alpha^{\vee},$   and therefore the line segment  $[x-2\pi k\alpha^{\vee}, s_{\alpha}x]$ shrinks to the singleton $\{x_{0}-2\pi k\alpha^{\vee}\}.$  
    Again by the periodicity of $f$, we thus get
    $$ \lim_{x\to x_0} g_\alpha(x) =  \frac{2}{|\alpha|^2} \alpha^T D^2\!f(x_0)\alpha\, \leq 0.$$ 
    This shows that $L_kf(x_0) \leq 0.$ 
   \end{proof}

   With the same proof as for the $W$-invariant case in \cite{RR11}, we conclude
\begin{corollary}
  The heat kernel $\Gamma_{k}$ is strictly positive, i.e.
  \begin{align*}
    \Gamma_{k}(t, x,y) > 0 \qquad\text{for all}\ (t, x,y)\in\ \opint\,\times\,\T\times\T. 
  \end{align*}
\end{corollary}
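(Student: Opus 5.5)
The plan is to show first that $\Gamma_k\geq 0$, and then upgrade this to strict positivity using the semigroup property together with analyticity.

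\emph{Non-negativity.} By the preceding theorem, $H(t)$ is positivity preserving on $C(\T)$. Fix $t>0$ and $x\in\T$. For every non-negative $f\in C(\T)$ we have
\begin{align*}
\int_{\T}\Gamma_k(t,x,y)f(y)\,\delta_k(y)\,dy = H(t)f(x)\geq 0 .
\end{align*}
Since $\Gamma_k(t,x,\cdot)$ is continuous (indeed real analytic, by the holomorphic extension) and real-valued, and since $\delta_k>0$ off the singular set, which is Lebesgue-null, it follows that $\Gamma_k(t,x,y)\geq 0$ for all $y$. Real-valuedness holds because $\overline{R_\lambda(x)}=R_\lambda(-x)$, so the series defining $\Gamma_k$ is invariant under complex conjugation after the substitution $\lambda\mapsto$ the term indexed by the conjugate polynomial, or more simply because $H(t)$ maps real functions to real functions.

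\emph{Strict positivity.} Suppose $\Gamma_k(t_0,x_0,y_0)=0$ for some $t_0>0$. Write $t_0=2s$ and use the semigroup identity (iii) of the lemma above:
\begin{align*}
0=\Gamma_k(2s,x_0,y_0)=\int_{\T}\Gamma_k(s,x_0,y)\,\Gamma_k(s,y,y_0)\,\delta_k(y)\,dy .
\end{align*}
The integrand is non-negative and continuous, so $\Gamma_k(s,x_0,y)\Gamma_k(s,y,y_0)=0$ for all $y$. For fixed $s$, $x_0$ and $y_0$, both factors are restrictions of real-analytic functions of $y\in\a$, via the holomorphic extension of $\Gamma_k(s,\cdot,\cdot)$. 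Their product vanishes identically on the connected space $\a$, so one of the two factors vanishes identically, say $\Gamma_k(s,x_0,\cdot)\equiv 0$; the other case is symmetric.

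This contradicts property (ii), $\int_{\T}\Gamma_k(s,x_0,y)\delta_k(y)\,dy=1$. In the other case, $\Gamma_k(s,\cdot,y_0)\equiv 0$, I would instead use the symmetry $\Gamma_k(s,y,y_0)=\overline{\Gamma_k(s,-y_0,-y)}$, which comes from the series and $R_\lambda(-x)=\overline{R_\lambda(x)}$. Alternatively I would derive a contradiction from property (iv): pairing with $R_0=1$ in the $y$-variable would force $\int \Gamma_k(s,y,y_0)\delta_k(y)\,dy=0$. This integral is $\sum_\lambda r_\lambda e^{-\theta_\lambda s}R_\lambda(-y_0)\langle R_\lambda,1\rangle$, and by orthogonality only $\lambda=0$ survives, giving $r_0\|1\|^2=1\neq 0$.

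The main obstacle is justifying the analyticity argument, since the product of real-analytic functions vanishing identically only forces one factor to vanish because the ring of real-analytic functions on a connected domain is an integral domain. I will invoke the holomorphic extension proposition for this. A second point needing care is the handling of the second case via the $y$-integral identity, which requires the orthogonality computation above rather than (ii) directly.
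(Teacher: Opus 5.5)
Your proposal is correct and is essentially the argument the paper imports from \cite{RR11}: non-negativity from positivity preservation of $H(t)$, then strict positivity from the semigroup identity, real-analyticity of $\Gamma_k(s,\cdot,\cdot)$ in each variable, and the normalization $\int_{\T}\Gamma_k(s,x,y)\,\delta_k(y)\,dy=1$. Two small remarks: your first justification of real-valuedness is garbled, but your fallback suffices, since positivity preservation against all $f\geq 0$ already forces $\Gamma_k(t,x,\cdot)$ to be real and non-negative; and the second case is dispatched most directly by the symmetry $\Gamma_k(s,y,y_0)=\Gamma_k(s,y_0,y)$ noted in the paper.
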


Note that $\Gamma_{k}(t, x, y)=\Gamma_{k}(t, y, x)$ as $\Gamma_{k}$ is real-valued on the above domain and $\overline{R_{\lambda}(x)}=R_{\lambda}(-x)$ for all $x\in\T$. 
As $C(\mathbb T)$ is dense in each of the spaces $L^p(\mathbb T, \delta_k), \, 1\leq p <\infty$, standard arguments as in Proposition 3.11 of \cite{RR11} also imply the following

\begin{corollary}
  The family $(H(t))_{t\geq 0}$ defines a symmetric diffusion semigroup on $\mathbb T$ in the sense of \cite{St70}, i.e. it is a contractive operator semigroup on $L^p(\mathbb T)$ for $1\leq p \leq\infty$ which is Markovian, i.e. $H(t)1 = 1,$ and each $H(t)$ is self-adjoint on $L^2(\mathbb T).$  Moreover, the heat semigroup is strongly continuous for $1\leq p<\infty.$ 
 
\end{corollary}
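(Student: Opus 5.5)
The plan is to transfer the properties of $(H(t))_{t\geq 0}$ from $C(\T)$ to the spaces $L^p(\T,\delta_k)$, using only the kernel properties established above. The four ingredients are positivity of $\Gamma_k$, the normalization $\int_\T\Gamma_k(t,x,y)\delta_k(y)\,dy=1$, the symmetry $\Gamma_k(t,x,y)=\Gamma_k(t,y,x)$, and the semigroup identity (3) of the preceding lemma. I would proceed in the following order.

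\textbf{Step 1: Markov property and contractivity.} For $f\in L^1(\T,\delta_k)$ the integral defining $H(t)f$ converges absolutely, since $\Gamma_k(t,\cdot,\cdot)$ is continuous and hence bounded on $\T\times\T$. The Markov property $H(t)1=1$ is exactly the normalization (2) of the lemma. For contractivity on $L^\infty$, positivity of $\Gamma_k$ together with (2) gives
\[
|H(t)f(x)|\leq \int_\T \Gamma_k(t,x,y)|f(y)|\,\delta_k(y)\,dy\leq \|f\|_\infty .
\]
For $L^1$, I would integrate in $x$, apply Fubini, and use symmetry plus (2) in the variable $x$; this gives $\|H(t)f\|_1\leq\|f\|_1$. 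Riesz--Thorin interpolation, or directly Jensen's inequality applied to the probability measure $\Gamma_k(t,x,y)\delta_k(y)\,dy$, then yields $\|H(t)f\|_p\leq\|f\|_p$ for all $1\leq p\leq\infty$.

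\textbf{Step 2: semigroup law and self-adjointness.} The identity $H(t+s)=H(t)H(s)$ on $L^p$ follows from (3) and Fubini, which is justified because all kernels are bounded and positive. Self-adjointness of $H(t)$ on $L^2(\T,\delta_k)$ follows because the kernel is real and symmetric. Alternatively, on the orthogonal basis $\{R_\lambda\}$ the operator $H(t)$ acts as multiplication by the real number $e^{-\theta_\lambda t}$, by part (4) of the lemma.

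\textbf{Step 3: strong continuity for $1\leq p<\infty$.} This is the only step needing an approximation argument, and it is the main (though mild) obstacle. For $g\in C(\T)$, the Feller theorem gives $\|H(t)g-g\|_\infty\to 0$ as $t\to 0$, and hence $\|H(t)g-g\|_{p}\to 0$, since $\delta_k\,dx$ is a finite measure. For $f\in L^p$ and $\varepsilon>0$ I would choose $g\in C(\T)$ with $\|f-g\|_p<\varepsilon$; such $g$ exists by density of $C(\T)$ in $L^p(\T,\delta_k)$. The uniform bound from Step 1 then gives
\[
\|H(t)f-f\|_p\leq \|H(t)(f-g)\|_p+\|H(t)g-g\|_p+\|g-f\|_p\leq 2\varepsilon+\|H(t)g-g\|_p ,
\]
which is below $3\varepsilon$ for small $t$. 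This completes the argument, mirroring Proposition 3.11 of \cite{RR11}.
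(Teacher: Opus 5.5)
Your proposal is correct and follows essentially the same route as the paper, which simply invokes the standard arguments of Proposition 3.11 in \cite{RR11}: contractivity, the Markov property and self-adjointness come from positivity, normalization and symmetry of $\Gamma_k$, and strong continuity on $L^p(\T,\delta_k)$, $1\leq p<\infty$, comes from the Feller property on $C(\T)$ together with density of $C(\T)$ in $L^p(\T,\delta_k)$. You might add that continuity at $t=0$, combined with contractivity and the semigroup law, already gives strong continuity at every $t\geq 0$.
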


\section{The Poisson Semigroup}\label{sec:poisson}

As usual, we define the Poisson semigroup $(P(t))_{t\geq 0}$ by subordination from $(H(t))_{t\geq 0}$ with respect to the convolution semigroup $(\sigma_{t})_{t\geq 0}$ on $\hopint$ with the probability measures
\begin{align*}
  d\sigma_{t}(s) = \frac{1}{\sqrt{4\pi}}ts^{-3/2}e^{-t^{2}/4s},\quad t\geq 0. 
\end{align*}

This means that for $f\in L^{p}(\T,\,\delta_{k})$ with $1\leq p \leq  \infty,$ $P(0)f:=f\,$ and for $t> 0$, $P(t)f\in L^{p}(\T,\,\delta_{k})$ is defined as the Bochner integral 
\begin{align*}
  P(t)f \coloneqq \int_{0}^{\infty} H(s)f\ d\sigma_{t}(s) = \frac{1}{\sqrt{\pi}}\int_{0}^{\infty}\frac{e^{-\tau}}{\sqrt{\tau}}H(t^{2}/4\tau)f\ d\tau. 
\end{align*}

By construction, $(P(t))_{t\geq 0}$ constitutes a symmetric diffusion semigroup.  It is strongly continuous on $C(\mathbb T)$ and on $L^p(\mathbb T, \delta_k)$ for $1\leq p<\infty$.  For $t>0$, the Poisson integral of $f\in L^{p}(\T,\,\delta_{k})$ is given explicitly by
\begin{align*}
  P(t)f(x) = \int_{\T}P_{k}(t, x, y) f(y)\,\delta_{k}(y) dy
\end{align*}
with the Poisson kernel
$$ P_{k}(t, x, y)= \int_{0}^{\infty}\Gamma_{k}(s, x, y)\ d\sigma_{t}(s), \quad t\in ]0,\infty[, x,y\in \mathbb T.$$ 
  The well-known identity 
 \begin{align*}
  \frac{1}{\sqrt{\pi}}\int_{0}^{\infty} \frac{e^{-\tau}}{\sqrt{\tau}}e^{-c^{2}/4\tau}\ d\tau = e^{-c}\quad\text{for\ } c>0. 
\end{align*}
leads to the expression
\begin{align}\label{poissonseries} 
 P_k(t,x,y) =  
  \sum_{\lambda\in \wtlatt}r_{\lambda}e^{-t\sqrt{\theta_{\lambda}}}R_{\lambda}(x)R_{\lambda}(-y).
  \end{align}
Note that $P_k>0.$ Further, as the constants $r_\lambda$ and the derivatives of the  $R_\lambda$ are of polynomial growth in $|\lambda|$ according to estimate \eqref{R_ableitung},  it is immediate that $P_k \in C^\infty(]0,\infty[ \times \mathbb T \times \mathbb T).$ Indeed,  the series and all its partial derivatives converge normally on  $[\delta, \infty[\times \mathbb T\times \mathbb T$ for each $\delta>0.$ 
Hence for $f\in L^p(\mathbb T, \delta_k), \, 1 \leq p \leq \infty,$ the  function $u(x, t):= P(t)f(x)$ belongs to $C^{\infty}(\T\times \opint)$ and solves the (generalized) Laplace equation $$(L_{k}+\partial_{t}^{2})u = 0 \quad\text{ on } \,\T\times \,\opint.$$ 
For $f\in L^1(\mathbb T, \delta_k)$ a short calculation shows that
$$ (P(t)f)^{\,\wedge k}(\lambda) = \, e^{-t\sqrt{\theta_\lambda}}\,\widehat f^{\,k}(\lambda) .$$
In particular,  for $f\in L^{2}(\T,\,\delta_{k})$ with orthogonal expansion
$\, f = \sum_{\lambda\in \wtlatt}a_{\lambda}R_{\lambda} \,$ 
it is immediate that
$\, P(t)f = \sum_{\lambda\in\wtlatt} e^{-t\sqrt{\theta_{\lambda}}}a_{\lambda}R_{\lambda}\,$ 
in $L^{2}(\T, \delta_{k})$.

We say that  a function $u\in C^{\infty}(\T\times\opint)$ belongs to $C^{\infty}(\T\times [0,\infty])$, if all its partial derivatives extend  continuously to $\T\times [0, \infty[$ and have a limit for $t\to \infty$. 
When starting with smooth initial data $f$, we obtain the following decay properties for $u_f(\cdot, t) =P(t)f$.

\begin{lemma}\label{lem:smoothAtTimeBoundary}
  For  $f\in C^{\infty}(\T)$, the function $u_f(x, t)=P(t)f(x)$ belongs to $C^{\infty}(\T\times [0,\infty]).$ Moreover, for each polynomial $p\in\C[t]$, $\alpha\in \mathbb N_0^n$ and strictly positive powers $m\in \mathbb N$  we have 
  $$\lim_{t\to\infty} p(t)\partial_{x}^{\alpha}\partial_{t}^{m}u_f(\cdot, t) =0,$$  
  the convergence being uniform on $\mathbb T.$ 
\end{lemma}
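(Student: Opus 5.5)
The plan is to work entirely with the eigenfunction expansion of $f$. Since $f\in C^\infty(\T)$, its coefficients $\widehat f^{\,k}(\lambda)$ decay faster than any power of $|\lambda|$. To see this, apply \eqref{Cherednik_FT} repeatedly, or equivalently use the self-adjointness of $L_k$ via \cref{antisymm} together with \eqref{eigenvalues_L_k}. This gives
$$\theta_\lambda^N\, \widehat f^{\,k}(\lambda) = (-1)^N\,\bigl(L_k^N f\bigr)^{\wedge k}(\lambda).$$
By \cref{diff_properties}, $L_k^N f\in C(\T)$, so $|\widehat f^{\,k}(\lambda)|\le C_N\,\theta_\lambda^{-N}$ for $\lambda\neq 0$. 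Since $\theta_\lambda=|\lambda_++\rho|^2-|\rho|^2\ge |\lambda|^2$, by \cref{ev}(a) and $\rho\in\weylcham$, we obtain rapid decay in $|\lambda|$. Then
$$u_f(x,t)=\sum_{\lambda\in P} r_\lambda\, e^{-t\sqrt{\theta_\lambda}}\,\widehat f^{\,k}(\lambda)\, R_\lambda(x),$$
which follows from \eqref{poissonseries} with $a_\lambda=r_\lambda \widehat f^{\,k}(\lambda)$.

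Next, differentiate this series termwise in $x$ and $t$. A generic term of $\partial_x^\alpha\partial_t^m u_f$ is
$$(-\sqrt{\theta_\lambda})^m\, r_\lambda\, e^{-t\sqrt{\theta_\lambda}}\, \widehat f^{\,k}(\lambda)\, \partial^\alpha R_\lambda(x).$$
By \eqref{R_ableitung}, the polynomial bound on $r_\lambda$ from the lemma in \cref{sec:heat}, $\theta_\lambda\lesssim |\lambda|^2+1$, and the rapid decay of $\widehat f^{\,k}$, the series of suprema over $(x,t)\in\T\times[0,\infty[$ converges, since $e^{-t\sqrt{\theta_\lambda}}\le 1$. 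Hence all partial derivatives converge uniformly on $\T\times[0,\infty[$, so they extend continuously to $t=0$. Only finitely many $\lambda$ lie in any ball, which makes the summation legitimate.

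For the limit at $t\to\infty$, split off the term $\lambda=0$. It contributes the constant $r_0\widehat f^{\,k}(0)$ to $u_f$ and is annihilated by $\partial_t^m$ for $m\ge1$; it also has zero $x$-derivatives. For $\lambda\ne0$, I need a uniform spectral gap $\theta_\lambda\ge c>0$. This holds because $\theta_\lambda\ge|\lambda|^2$ and $P\setminus\{0\}$ is discrete, so $|\lambda|\ge\min_{\mu\in P\setminus\{0\}}|\mu|>0$. Writing $e^{-t\sqrt{\theta_\lambda}}\le e^{-(t-1)\sqrt c}\,e^{-\sqrt{\theta_\lambda}}$ for $t\ge1$ gives
$$\sup_x\bigl|\partial_x^\alpha\partial_t^m u_f(x,t)\bigr|\le C\,e^{-\sqrt c\,t}$$
for all $m\ge0$ after removing the $\lambda=0$ term, and for $m\ge1$ without removing anything. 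Multiplying by $p(t)$ still gives a quantity tending to $0$ uniformly, which is the stated decay. The same estimate shows that every derivative has a limit as $t\to\infty$: the limit is $0$ except for $u_f$ itself, whose limit is the constant $r_0\widehat f^{\,k}(0)$. This gives $u_f\in C^\infty(\T\times[0,\infty])$.

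The only mildly delicate step is the rapid decay of $\widehat f^{\,k}$. One must justify moving $L_k^N$ onto the $R_\lambda$ for $f$ merely in $C^\infty$ rather than in $\mathcal T$. This follows from \cref{antisymm}, applied $2N$ times to the $C^1$ functions $D_{\xi_j}^{\ell}f$, which are $C^1$ by \cref{diff_properties}. Everything else is routine dominated convergence.
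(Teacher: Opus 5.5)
Your proof is correct and follows essentially the same route as the paper. Both arguments get rapid decay of $\widehat f^{\,k}(\lambda)$ by moving powers of Cherednik operators onto $R_\lambda$, and then use normal convergence of the termwise differentiated eigenfunction series up to $t=0$. The paper uses $(\sum_j D_{\xi_j}^2)^j$ via \eqref{Cherednik_FT}, while you use $L_k^N$; the two differ only by the shift $|\rho|^2$. Your explicit spectral-gap bound $\theta_\lambda\ge\min_{\mu\in P\setminus\{0\}}|\mu|^2>0$ spells out what the paper's brief remark, that ``the same calculation'' gives decay against any polynomial weight $p(t)$ as $t\to\infty$, implicitly needs.
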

\begin{proof}
  We need to show smoothness of $u_f$ at the endpoints $t=0$ and $t=\infty.$ Let $\alpha\in \mathbb N_0^n, m\in \mathbb N_0.$ 
  By the dominated convergence theorem and the normal convergence of the series \eqref{poissonseries} and its partial derivatives on  $[\delta, \infty[\times \mathbb T\times \mathbb T$ with $\delta>0$ it follows that
  $$ \partial_{x}^{\alpha}\partial_{t}^{m} u_f(x, t) = \int_{\T} \Bigl(\sum_{\lambda\in\wtlatt} r_{\lambda}  \partial_{t}^{m}e^{-t\sqrt{\theta_{\lambda}}} \,\partial_{x}^{\alpha} R_{\lambda}(x) R_{\lambda}(-y)\Bigr) f(y)\delta_{k}(y) dy$$ 
  on $\mathbb T \times ]0,\infty[.$  Recall that the $\theta_\lambda$ are also of polynomial growth. 
For $\alpha\in\N_{0}^{n}, \,m\in \N_{0}$ we thus get
  \begin{align*}
    \lim_{t\to\infty} \partial_{x}^{\alpha}\partial_t^{m} u_f(x, t) &= \int_{\T} \sum_{\lambda\in\wtlatt} r_{\lambda}  (-\sqrt{\theta_\lambda})^m \lim_{t\to\infty} e^{-t\sqrt{\theta_{\lambda}}}\, \partial_{x}^{\alpha} R_{\lambda}(x) R_{\lambda}(-y) f(y)\delta_{k}(y) dy\\
    &=\begin{cases}
      \displaystyle r_{0}\int_{\T}f\delta_{k} dy & \text{if}\ \alpha=0\text{ and}\ m = 0,\\
      \, 0 &\text{otherwise}.
    \end{cases}
  \end{align*}
  This proves  $u\in C^{\infty}(\T\times\, ]0, \infty]).$ 
  
 The same calculation shows that even $\,\lim_{t\to\infty} p(t) \partial_{x}^{\alpha}\partial_{t}^{m}u_f(x, t) \,= 0\,$ for any polynomial $p$ as soon as $m> 0$, and the uniform convergence on $\mathbb  T$ is also obvious.  

  It remains to prove that all the limits 
  $\,\lim_{t\to 0} \partial_{x}^{\alpha}\partial_{t}^{m}u_f(x, t)\,$  with $ \alpha\in \mathbb N_0^n, m\in \mathbb N_0$ 
   exist pointwise on $\mathbb T.$ 
For initial data $f\in \mathcal T$ this is easy. Indeed, for $f=R_{\mu}$ we calculate 
  \begin{align*}
    u_f(x, t) &= \int_{\T}\sum_{\lambda \in \wtlatt} r_{\lambda}e^{-t\sqrt{\theta_{\lambda}}} R_{\lambda}(x) R_{\lambda}(-y)R_{\mu}(y)\,\delta_{k}(y)dy\\
            &= \sum_{\lambda\in\wtlatt} r_{\lambda}R_{\lambda}(x) e^{-t\sqrt{\theta_{\lambda}}} \langle R_{\mu}, R_{\lambda}\rangle_{\delta_{k}}\\
            &= R_{\mu}(x)e^{-t\sqrt{\theta_{\mu}}}.
  \end{align*}
  Therefore $$\lim_{t\to 0} \partial_{x}^{\alpha}\partial_{t}^{m}u_f(x, t) = (-\sqrt{\theta_{\mu}})^{m}\,\partial_{x}^{\alpha}f(x).$$
  For general $f\in C^{\infty}(\T)$ the situation becomes more intricate. 
We argue as in the proof of the Paley-Wiener theorem \cite[Thm. 8.6]{Op95}. To this end, we consider the generalized Fourier coefficients $\widehat f^{\,k}(\lambda). $
For a polynomial $p\in \mathbb C[\frak a_{\mathbb C}]$ denote by $p(D)$ the associated Cherednik operator. Then  
according to formula  \eqref{Cherednik_FT},  
\begin{align*}
  (p(D)f)^{\wedge k}(\lambda) = p(i\wt\lambda\,) \widehat f^{\,k}(\lambda).
\end{align*}
Taking $p(z)\coloneqq \langle z, z\rangle^{j}$ with $j\in \mathbb N_0$ 
this becomes
\begin{align*}
  (p(D)f)^{\wedge k}(\lambda) = (-1)^j |\widetilde \lambda|^{2j} \widehat f^{\,k}(\lambda).
                                          \end{align*}
So for each $j\in\N_{0}$ there is a constant $C_{f,j}\geq 0$ such that
\begin{align}\label{decay}
  |\wt\lambda\,|^{2j}\,|\widehat f^{\,k}(\lambda)| \leq C_{f, j}. 
\end{align}
Now fix $\alpha \in \mathbb N_0^n, \, m\in \mathbb N_0.$ 
Estimate \eqref{decay} implies that the series
$$g(x, t):=\sum_{\lambda\in\wtlatt}r_{\lambda}\partial_{x}^{\alpha} R_{\lambda}(x)\partial_{t}^{m} e^{-t\sqrt{\theta_{\lambda}}}\,\widehat{f}^{\,k}(\lambda)$$
converges normally on $\a\times [0,\infty]$. Hence we conclude that
\begin{align*}
  \lim_{t\to 0} \partial_{x}^{\alpha}\partial_{t}^{m}u_f(x, t) &= \lim_{t\to 0} \int_{\T} \Bigl(\sum_{\lambda}r_{\lambda}\partial_{t}^{m}e^{-t\sqrt{\theta_{\lambda}}}\, \partial_{x}^{\alpha}R_{\lambda}(x)R_{\lambda}(-y)\Bigr) f(y)\delta_{k}(y) dy\\
  &= \lim_{t\to 0} g(x, t) = g(x, 0) 
\end{align*}
exists. This shows that  $u_f\in C^{\infty}(\T\times [0,\infty])$ and finishes the proof.
\end{proof}

\begin{lemma}\label{Poisson_Cherednik} The Cherednik operators $D_\xi$ commute with the action of the Poisson semigroup $(P(t))_{t\geq 0}$ on $C^1(\T),$ i.e.
$$D_\xi P(t)f = P(t)D_\xi f$$
for all $f\in C^1(\T)$ and $t\geq 0.$
\end{lemma}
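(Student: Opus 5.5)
The case $t=0$ is trivial, so fix $t>0$ and $f\in C^1(\T)$. I will compare both sides through their generalized Fourier coefficients $\widehat{\,\cdot\,}^{\,k}(\lambda)$. Since $\{R_\lambda\}$ is an orthogonal basis of $L^2(\T,\delta_k)$, two continuous functions with the same coefficients for all $\lambda\in P$ coincide.

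First I check that both sides are continuous. By \cref{diff_properties}, $D_\xi f\in C(\T)$, so $P(t)D_\xi f$ is continuous. Moreover $P(t)f\in C^\infty(\T)$, because $P_k(t,\cdot,y)$ is smooth in $x$ with derivatives bounded uniformly in $y$. Hence $D_\xi P(t)f$ is smooth, again by \cref{diff_properties}.

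Next I compute the coefficients. Formula \eqref{Cherednik_FT} applies to $P(t)f\in C^1(\T)$, and the multiplier formula for the Poisson semigroup applies to both $f$ and $D_\xi f$. This gives
\begin{align*}
(D_\xi P(t)f)^{\wedge k}(\lambda) &= i\langle\wt\lambda,\xi\rangle\, e^{-t\sqrt{\theta_\lambda}}\,\widehat f^{\,k}(\lambda),\\
(P(t)D_\xi f)^{\wedge k}(\lambda) &= e^{-t\sqrt{\theta_\lambda}}\,(D_\xi f)^{\wedge k}(\lambda) = e^{-t\sqrt{\theta_\lambda}}\, i\langle\wt\lambda,\xi\rangle\,\widehat f^{\,k}(\lambda).
\end{align*}
In the second line \eqref{Cherednik_FT} is used once more, now for $f$ itself. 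The two expressions agree for every $\lambda\in P$, so $D_\xi P(t)f=P(t)D_\xi f$ in $L^2(\T,\delta_k)$. By continuity of both sides the identity holds pointwise.

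The only real obstacle is justifying that \eqref{Cherednik_FT} holds on $C^1(\T)$ and not just on $\mathcal T$. The paper states this as a consequence of \cref{antisymm} together with the eigenvalue equation \eqref{Cherednik_EF}, since
\[
\langle D_\xi f,R_\lambda\rangle_{\delta_k}=-\langle f,D_\xi R_\lambda\rangle_{\delta_k}=-\overline{i\langle\wt\lambda,\xi\rangle}\,\widehat f^{\,k}(\lambda)=i\langle\wt\lambda,\xi\rangle\,\widehat f^{\,k}(\lambda),
\]
using that $\langle\wt\lambda,\xi\rangle$ is real. So I will simply invoke it.

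An alternative route avoids Fourier coefficients and differentiates under the integral. One writes
\[
D_\xi P(t)f(x)=\int_{\T} \bigl(D_\xi^{x}P_k(t,x,y)\bigr) f(y)\,\delta_k(y)\,dy
\]
and moves $D_\xi$ onto the kernel. This relies on termwise application of $D_\xi$ to the normally convergent series \eqref{poissonseries} and the symmetry relation $D_\xi^{x}P_k = -D_\xi^{y}P_k$ coming from $R_\lambda(-y)$. I regard the coefficient argument above as cleaner and will use it.
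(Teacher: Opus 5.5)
Your argument is correct and is essentially the paper's: both use that $D_\xi$ and $P(t)$ act diagonally on the $R_\lambda$, with multipliers $i\langle\wt\lambda,\xi\rangle$ (via \eqref{Cherednik_FT}) and $e^{-t\sqrt{\theta_\lambda}}$. The paper applies $D_\xi$ termwise to $P(t)f=\sum_{\lambda} r_\lambda e^{-t\sqrt{\theta_\lambda}}\widehat f^{\,k}(\lambda)R_\lambda$ and resums, whereas you apply \eqref{Cherednik_FT} to $P(t)f$ itself and conclude by uniqueness of coefficients, which avoids the termwise step.

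One caution about the alternative you set aside: the relation $D_\xi^{x}P_k=-D_\xi^{y}P_k$ fails in general for $k\neq 0$, because $D_\xi$ has non-real coefficients (for real $h$ one finds $\operatorname{Im} D_\xi h=-\tfrac12\sum_{\alpha\in R_+}k_\alpha\langle\alpha,\xi\rangle\, s_\alpha h$). The correct identity, which is exactly what the sesquilinear anti-symmetry of \cref{antisymm} requires, is $D_\xi^{x}P_k=-\overline{D_\xi^{y}P_k}$.
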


\begin{proof} Using \eqref{Cherednik_EF} and \eqref{Cherednik_FT}, we calculate  \begin{align*}
    D_{\xi}P(t)f(x) &=  D_{\xi}\left( \sum_{\lambda\in P} r_{\lambda}e^{-t\sqrt{\theta_{\lambda}}}\, \widehat f^{\,k}(\lambda) R_\lambda\right)(x)\\
                &=\sum_{\lambda\in P} r_{\lambda} e^{-t\sqrt{\theta_{\lambda}}}\,\widehat f^{\, k}(\lambda)\,                 i\langle \widetilde\lambda, \xi\rangle R_{\lambda}(x) \\
                &= \sum_{\lambda\in P}r_{\lambda}e^{-t\sqrt{\theta_{\lambda}}} (D_{\xi} f)^{\wedge k}(\lambda) R_{\lambda}(x)\\
                &=P(t)(D_{\xi}f)(x).
  \end{align*}
	\end{proof}

For $f\in L^{p}(\T,\,\delta_{k})$ with $1\leq p\leq \infty$ we define the maximal function $P^{*}\!f$ associated to the Poisson semigroup by
\begin{align*}
  P^{*}\!f(x)\coloneqq \sup_{t>0}|P(t)f(x)|. 
\end{align*}

\begin{lemma}\label{lem:maxfuncContOp}
  The operator $f\mapsto P^{*}\!f$ is bounded on $L^{p}(\T,\,\delta_{k})$ for $1 < p\leq \infty$ and of weak type $(1, 1)$. 
\end{lemma}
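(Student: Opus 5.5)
The plan is to deduce everything from Stein's maximal theorem for symmetric diffusion semigroups \cite[Ch.~III]{St70}, together with a domination of the Poisson maximal function by the heat maximal function. Write
\[
H^{*}f(x) := \sup_{s>0}|H(s)f(x)|.
\]
By the last corollary of \cref{sec:heat}, $(H(t))_{t\geq 0}$ is a symmetric diffusion semigroup on $L^p(\T,\delta_k)$. Stein's maximal theorem therefore gives $\|H^{*}f\|_{p}\leq C_p\|f\|_p$ for $1<p\leq\infty$ (for $p=\infty$ this is trivial by contractivity). The Poisson semigroup is subordinated via the probability measures $\sigma_t$. For $t>0$ and almost every $x$ we have
\[
|P(t)f(x)|\leq\int_0^\infty |H(s)f(x)|\,d\sigma_t(s)\leq H^{*}f(x),
\]
so $P^{*}f\leq H^{*}f$ pointwise. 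This settles the range $1<p\leq\infty$. One can also apply Stein's theorem directly to $(P(t))$, which is itself a symmetric diffusion semigroup by construction.

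A measurability detail needs care. Since $P_k$ is smooth on $]0,\infty[\times\T\times\T$, the map $t\mapsto P(t)f(x)$ is continuous on $]0,\infty[$ for every $x$. Hence the supremum may be taken over rational $t$, and $P^{*}f$ is measurable. For the heat semigroup, use $\Gamma_k\in C^\infty$ in the same way. Also use the positivity $\Gamma_k>0$: then $|H(s)f|\leq H(s)|f|$, so it suffices to treat $f\geq0$.

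The weak type $(1,1)$ estimate is the main obstacle. Stein's general theorem gives no endpoint at $p=1$, and no Calder\'on--Zygmund kernel bounds are available here. My first attempt is to exploit the structure of $(\T,\delta_k\,dx)$ as a space of homogeneous type. With the Riemannian distance on $\T$, the measure $\delta_k\,dx$ is doubling, since $\delta_k$ is a product of powers $|\sin(\langle\alpha,x\rangle/2)|^{2k_\alpha}$ of distance functions to the walls. The aim is then to show
\[
P_k(t,x,y)\lesssim \frac{1}{\mu(B(x,t))}
\]
with suitable decay, or at least that $P^{*}f$ is dominated by the Hardy--Littlewood maximal function $M_{\delta_k}$ on this space. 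Weak $(1,1)$ would then follow from the Vitali covering lemma. Such Gaussian-type bounds are what the introduction says is missing in general, so I expect this route to be delicate.

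The alternative I would pursue is to reduce to the $W$-invariant setting. Averaging over $W$ gives the symmetric kernel
\[
\sum_{w}\Gamma_k(t,wx,y).
\]
Its heat kernel is known to satisfy Gaussian bounds (\cite{RR11} and the Jacobi/symmetric space literature), and $\Gamma_k\leq\sum_w\Gamma_k(t,wx,\cdot)$ since all terms are positive. This gives
\[
P^{*}f(x)\leq \sum_w P^{*}_{W}(|f|)(wx)\cdot C,
\]
where $P^{*}_W$ is the symmetric maximal operator acting on $|f|$ with the symmetrized kernel. The weak type bound then transfers because $w$ preserves $\delta_k\,dx$. If such symmetric bounds cannot be cited, I would fall back on the Hopf--Dunford--Schwartz maximal ergodic theorem applied to the Markov contraction semigroup $(H(t))$. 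That theorem yields weak $(1,1)$ for the ergodic averages $\frac1T\int_0^T H(s)f\,ds$. Subordination then lets me dominate $P^{*}$ by these averages, since $t\mapsto P(t)f$ is an average of $H(s)f$ against the density of $\sigma_t$, whose decreasing majorant has bounded integral uniformly in $t$ after rescaling $s=t^2\tau$. The last point, checking that this kernel majorization is uniform in $t$, is the step I expect to require the most care.
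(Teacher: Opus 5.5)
Your fallback argument is the paper's proof, and it is complete. The paper writes $P(t)f=t^{-2}\int_0^\infty\phi(s/t^2)H(s)f\,ds$ with $\phi(s)=\frac{1}{\sqrt{4\pi}}e^{-1/4s}s^{-3/2}$. It integrates by parts to get $|P(t)f(x)|\le A\sup_{s>0}\frac1s\int_0^s|H(\tau)f(x)|\,d\tau$ with $A$ independent of $t$, and then applies the Hopf--Dunford--Schwartz ergodic theorem. Your decreasing-majorant formulation is equivalent, and the step you expect to be delicate is routine:
\begin{itemize}
\item $\phi$ increases on $]0,1/6]$ and decreases on $[1/6,\infty[$, so $\psi(s):=\sup_{r\ge s}\phi(r)$ is bounded, continuous, decreasing and integrable.
\item Write $\psi_t(s)=t^{-2}\psi(s/t^2)=\nu_t(]s,\infty[)$ with the positive measure $\nu_t=-d\psi_t$. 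Fubini gives, for $g\ge 0$,
$$\int_0^\infty\psi_t(s)g(s)\,ds\le\Big(\sup_{T>0}\frac1T\int_0^Tg(s)\,ds\Big)\int r\,d\nu_t(r)=\|\psi\|_1\,\sup_{T>0}\frac1T\int_0^Tg(s)\,ds.$$
\item This is uniform in $t$ because $\|\psi_t\|_1=\|\psi\|_1$.
\end{itemize}
Apply this with $g(s)=H(s)|f|(x)$. The Hopf--Dunford--Schwartz theorem gives $L^p$-boundedness for $1<p\le\infty$ and the weak type $(1,1)$ bound at once, so this single domination proves the whole lemma. Your separate argument for $1<p\le\infty$, namely $P^*f\le H^*f$ plus Stein's maximal theorem for the symmetric diffusion semigroup $(H(t))$, is correct. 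It just uses a much deeper result than needed.

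You should drop the route through the $W$-invariant setting, even though you list it as your preferred one. The domination $P_k(t,x,y)\le\sum_{w\in W}P_k(t,x,wy)$ does reduce matters to the $W$-average of $|f|$, since $\delta_k$ is $W$-invariant. But that reduction gains nothing without kernel estimates, and Gaussian bounds for the symmetric heat kernel are not available for general $R$ and $k$. \cite{RR11} does not contain them. The sharp estimates in the literature, such as \cite{NS13} for Jacobi expansions, or those from the geometric multiplicities of the group case, do not cover arbitrary root systems and multiplicities. This is exactly the lack of kernel bounds the introduction points out.
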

\begin{proof}
  The proof is standard, c.f. \cite{St70}, Lemma 1 on p.48f\,: First, integration by parts in
  \begin{align*}
    P(t)f = \frac{1}{t^{2}}\int_{0}^{\infty}\phi\Big(\frac{s}{t^{2}}\Big)H(s)f\ ds \quad\text{with}\ \phi(s)=\frac{1}{\sqrt{4\pi}}e^{-1/4s}s^{-3/2} 
  \end{align*}
  shows that
  \begin{align*}
    |P(t)f(x)| \leq A\sup_{s>0}\frac{1}{s}\int_{0}^{s}|H(\tau)f(x)|\,d\tau, 
  \end{align*}
  with some constant $A>0$ independent of $f, t$ and $x$. Further, it follows by the Hopf-Dunford-Schwartz ergodic theorem (\cite[Chapt. VIII.7]{DS58}) that the maximal function $M$ defined by
  \begin{align*}
    Mf(x)=\sup_{s>0}\frac{1}{s}\int_{0}^{s} |H(\tau)f(x)|\, d\tau
  \end{align*}
  is bounded on $L^{p}(\T,\,\delta_{k})$ for $1<p\leq \infty$ and of weak type $(1,1)$. This implies the assertion.
\end{proof}

  \section{The Littlewood-Paley $g$-functions}\label{sec:gfun}

The aim of this section is to introduce several Littlewood-Paley $g$-functions in the compact Heckman-Opdam setting and prove their $L^{p}$-boundedness for $1<p\leq 2$. We essentially follow the approach of \cite[Chapt. II]{St70} for compact Lie groups. In our setting, Cherednik operators will take the place of left-invariant vector fields on a compact Lie group. From a structural point of view, we deviate from \cite{St70}, as Cherednik operators are not pure differential operators and contain reflection terms. Their handling requires additional effort. 
In the following, the positive system $R_{+}\subset \a $ with multiplicity $k\geq 0$ is always fixed. 

\begin{definition}\label{grad_def}
We fix an orthonormal basis $(\xi_{j})_{1\leq j\leq n}$ of $\a$ and introduce the following notations for  $h\in C^{1}(\T\times \opint)$: 
the modified Cherednik operators
 $$ \wt D_{\xi} h \coloneqq D_{\xi}h + i\langle \rho, \xi\rangle h \,=\, \partial_\xi h + \sum_{\alpha \in R_+} ik_\alpha \langle\alpha, \xi \rangle\, d_\alpha h\quad (\xi\in \a),$$
 the gradient and the space-time gradient
 $$ \nabla_{\!x} h \coloneqq (\partial_{\xi_{1}}h, \ldots, \partial_{\xi_{n}}h)^{T},\quad 
  \nabla h \coloneqq (\partial_{\xi_{1}}h, \ldots, \partial_{\xi_{n}}h, \partial_{t}h)^{T},$$
 as well as  the space-time Dunkl gradient 
  $$ \wt\nabla_{k}h \coloneqq (\wt D_{\xi_{1}}h, \ldots, \wt D_{\xi_{n}}h, \partial_{t}h)^{T}.$$
Accordingly, we define
\begin{align*}
  |\nabla_{\!x}h|^2 := \sum_{j=1}^{n}|\partial_{\xi_{j}}h|^{2}, \>\> |\nabla h|^2\coloneqq |\partial_{t}h|^{2}+\sum_{j=1}^{n} |\partial_{\xi_{j}}h|^{2},\>\>
  |\wt\nabla_{k} h|^2\coloneqq |\partial_{t}h|^{2}+\sum_{j=1}^{n} |\wt D_{\xi_{j}}h|^{2}. 
\end{align*}
Note that these expressions are independent of the choice of the orthonormal basis $(\xi_j).$
Further, we introduce the space-time Heckman-Opdam Laplacian 
\begin{align*}
  \Delta_{k}\coloneqq L_{k}+\partial_{t}^{2}
\end{align*}
on $C^{2}(\T\times\opint)$; here $L_{k}$ is understood to act with respect to $x\in\T$.

For  $f\in L^{p}(\T,\,\delta_{k})$ we again denote by $u_{f}(x, t)=P(t)f(x)$ the Poisson integral of $f$ and define the following Littlewood-Paley $g$-functions:
\begin{align*}
  g_{\partial_{\xi}}(f)(x) &\coloneqq \left(\int_{0}^{\infty} t\,|\partial_{\xi} u_{f}(x, t)|^{2}\ dt\right)^{1/2}, 
                           &g_{\wt D_{\xi}}(f)(x) \coloneqq \left(\int_{0}^{\infty} t\,|\wt D_{\xi} u_{f}(x, t)|^{2}\ dt\right)^{1/2},\\
  g_{\nabla}(f)(x)&\coloneqq \left(\int_{0}^{\infty}t\,|\nabla u_{f}(x, t)|^{2}\ dt\right)^{1/2},
                  &g_{\wt\nabla_{k}}(f)(x)\coloneqq \left(\int_{0}^{\infty}t\,|\wt\nabla_{k} u_{f}(x, t)|^{2}\ dt\right)^{1/2}.
  \end{align*}
Further,
\begin{align*}
  g_{0}^{\alpha}(f)(x)&\coloneqq \left(\int_{0}^{\infty} t\left\vert\frac{u_{f}(x, t)-u_{f}(s_{\alpha}x, t)}{1-e^{-i\langle \alpha, x\rangle}}\right\vert^{2} dt\right)^{1/2}\,=\,   \left(\int_0^\infty t\,|d_\alpha u_f(x,t)|^2dt\right)^{1/2},  
  \end{align*}
  where reflection operators are understood to act with respect to the variable $x$, and                                                                                                                          
\begin{align*} g_{0}(f)(x)&\coloneqq \max\big\{g_{0}^{\alpha}(f)(x): \alpha\in R_+ \text{ with } k_\alpha >0\big\} \\
    g_{\Delta_{k}}(f)(x)&\coloneqq \left(\int_{0}^{\infty} t\, |\Delta_{k}(u_{f}^{2})(x, t)|\, dt\right)^{1/2}.
  \end{align*}
 \end{definition}
  
 The main result of this section will be the following theorem concerning the $L^p$-boundedness of these $g$-functions.

\begin{theorem}\label{thm:gFunctionBound}
  Denote $\|\cdot\|_{p}\coloneqq\|\cdot\|_{p, \delta_{k}}$ and fix $\xi\in\a$ with $|\xi|\leq 1$.
  \begin{enumerate}
    \item For $1<p\leq 2$ and a subscript $\bullet\in\{0, \partial_{\xi}, \nabla, \Delta_{k}\}$, there exists a constant $A_{\bullet, p}>0$ such that for all $f\in L^{p}(\T, \delta_{k})$,
  \begin{align*}
    \|g_{\bullet}(f)\|_{p} \leq A_{\bullet, p}\|f\|_{p}. 
  \end{align*}

    \item For $1<p<\infty$ and a subscript $\bullet\in\{\wt D_{\xi}, \wt\nabla_{k}\}$, there exists a constant $A_{\bullet, p}>0$ such that the assertion of part (1) holds.
  \end{enumerate}
  \end{theorem}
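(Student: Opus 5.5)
We follow Stein's scheme for compact Lie groups, with Cherednik operators in the role of left-invariant vector fields. We work first with $f\in C^\infty(\T)$, $f\geq 0$ (and $f>0$ after adding a constant), and extend to general $f\in L^p$ by density and positivity/sublinearity. The basic pointwise identity, for $p>1$ and $u=u_f>0$, is a Bochner-type formula for $\Delta_k(u^p)$. Expanding $L_k=L_k^{\rm diff}-L_k^{\rm ref}$ gives
\begin{align*}
\Delta_k(u^p)=p(p-1)u^{p-2}|\nabla u|^2+\sum_{\alpha\in R_+}k_\alpha\frac{|\alpha|^2}{4\sin^2\frac{\langle\alpha,x\rangle}{2}}\Bigl(u(s_\alpha x)^p-u(x)^p-p\,u(x)^{p-1}\bigl(u(s_\alpha x)-u(x)\bigr)\Bigr),
\end{align*}
using $\Delta_k u=0$. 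The reflection term is nonnegative by convexity of $s\mapsto s^p$, and for $1<p\le 2$ it dominates $c_p\,u^{p-2}\max(u,u\circ s_\alpha)^{p-2}\cdots|u-u\circ s_\alpha|^2/|1-e^{-i\alpha}|^2$. More concretely, by the elementary inequality $b^p-a^p-pa^{p-1}(b-a)\ge c_p(a+b)^{p-2}(b-a)^2$, it dominates $u^{p-2}$ times $|d_\alpha u|^2$ up to the comparison $u(s_\alpha x)\lesssim P^*f$. Thus $\Delta_k(u^p)\ge 0$, and in fact it controls $|\nabla u|^2$ and $|d_\alpha u|^2$ weighted by $u^{p-2}$.

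Next we prove the key integrated estimate: for $F\in C^\infty(\T\times[0,\infty])$ with rapid decay of $t$-derivatives (\cref{lem:smoothAtTimeBoundary}),
\[
\int_\T\int_0^\infty t\,\Delta_k F\,dt\,\delta_k dx=\int_\T F(x,0)\delta_k dx-\lim_{t\to\infty}\int_\T F\,\delta_k.
\]
This uses that $\int_\T L_k G\,\delta_k=0$, which follows from the anti-symmetry of \cref{antisymm} applied with $g=1$ (note $D_\xi 1=-i\langle\rho,\xi\rangle$, so a short check is needed; alternatively use self-adjointness of $L_k$ and $L_k1=0$), together with integration by parts in $t$. Applied to $F=u^p$ this gives $\int g_*(f)^2$-type bounds. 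Then Stein's argument runs as follows. Pointwise, $g_\nabla(f)(x)^2\le C\,(P^*f(x))^{2-p}\int_0^\infty t\,\Delta_k(u^p)\,dt=:C(P^*f)^{2-p}I(x)$, and the same holds for each $g_0^\alpha$ with the reflection term, where $u(s_\alpha x)\le P^*f(s_\alpha x)$; we handle this by replacing $P^*f$ with $\sum_{w\in W}P^*f\circ w$, whose $L^p(\delta_k)$ norm is controlled since $\delta_k$ is $W$-invariant. Hölder with exponents $2/p$ and $2/(2-p)$, combined with $\int I\,\delta_k\le\|f\|_p^p$ and \cref{lem:maxfuncContOp}, yields $\|g_\bullet f\|_p\lesssim\|f\|_p$ for $\bullet\in\{\nabla,\partial_\xi,0\}$. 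For $g_{\Delta_k}$, we write $\Delta_k(u^2)=2|\nabla u|^2+(\text{reflection part})$; its absolute value is bounded by $g_\nabla^2+\sum_\alpha (g_0^\alpha)^2$-type quantities, giving case (1). Real-valuedness is not an issue: for complex $f$, split into positive parts.

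Part (2) for $1<p\le2$ follows from part (1), since $\wt D_\xi u=\partial_\xi u+\sum ik_\alpha\langle\alpha,\xi\rangle d_\alpha u$ gives $g_{\wt D_\xi}\le g_{\partial_\xi}+C g_0$. For $2\le p<\infty$ we use duality, which is the main obstacle, since the spatial $g$-functions are not handled by the positive-function trick. We would prove the reverse-type inequality $\|f-r_0\!\int f\delta_k\|_{p'}\lesssim\|g_{\wt\nabla_k}f\|_{p'}$ via polarization: by anti-symmetry of $\wt D_\xi$ up to the $\rho$-term and the spectral calculus,
\[
\langle f_1,f_2\rangle=4\int_\T\int_0^\infty t\,\partial_t u_1\overline{\partial_t u_2}\,dt\,\delta_k\quad\text{for mean-zero }f_i,
\]
with analogous identities for $\wt D_\xi$. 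Alternatively, and more robustly, we would defer this: once the Riesz transforms $D_\xi(-L_k)^{-1/2}$ are shown to be $L^p$-bounded (Section 6), we get $g_{\wt D_\xi}(f)=g_1(R_\xi f)$-type identities. Here $g_1$ is the time-derivative $g$-function of the symmetric diffusion semigroup, bounded for all $1<p<\infty$ by Stein's general theory, and $\partial_t u_{R f}=$ a multiple of $D_\xi u_f$ by \cref{Poisson_Cherednik}. This is the route we would take.
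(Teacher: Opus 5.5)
Your proposal is correct, and for part (2) it follows the paper. For $1<p\le 2$ you reduce to part (1) via $g_{\wt D_\xi}\le g_{\partial_\xi}+c\,g_0$ (and likewise $g_{\wt\nabla_k}\le g_\nabla+c\,g_0$). For $p>2$ you use the Riesz transforms of \cref{sec:riesz}, the identity $g_{\wt D_{\xi_j}}(f)=g_1(\mathcal R_j f)$, and Stein's bound for $g_1$. This is not circular: the $L^p$-bound for $\mathcal R_j$ comes from three ingredients, none of which uses part (2):
\begin{itemize}
\item the $p'\le 2$ estimate for $g_{\wt\nabla_k}$;
\item Stein's reverse inequality $\|\mathcal R_j f\|_{p'}\le B\|g_1(\mathcal R_j f)\|_{p'}$;
\item duality via $\mathcal R_j^*=-\mathcal R_j$.
\end{itemize}
Your first, polarization-based route is not needed.

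Where you genuinely differ is the positivity step in part (1). You observe that the whole reflection remainder $p u^{p-1}L_k^{\operatorname{ref}}(u)-L_k^{\operatorname{ref}}(u^p)$ from \cref{lem:laplacePpowerOfHarmonic} is, pointwise, a nonnegative combination of the convexity defects $b^p-a^p-pa^{p-1}(b-a)$, where $a=u(x)$ and $b=u(s_\alpha x)$. By Taylor's formula these defects are $\ge \tfrac{p(p-1)}{2}\max(a,b)^{p-2}(a-b)^2$ for $1<p\le 2$. Hence $\Delta_k(u^p)\ge 0$ pointwise, and it dominates both $p(p-1)u^{p-2}|\nabla u|^2$ and a multiple of $(f^{**})^{p-2}|d_\alpha u|^2$. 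Tonelli together with \cref{lem:integraltDelta} then gives $\int_\T I\,\delta_k\le \|f\|_p^p$ at once. The paper instead splits $U_p=U_p^{(1)}+U_p^{(2)}$ and keeps only the manifestly nonnegative part $U_p^{(1)}$. It then removes $U_p^{(2)}$ using its oddness under $s_\alpha$, integrated over the $W$-invariant exhaustion $\Omega_N$; this mirrors the mechanism used by Soltani and by Liao--Zhang--Li. Your route is shorter and avoids the cancellation and the limit over $\Omega_N$. It also yields the stronger pointwise fact that $u^p$ is $\Delta_k$-subharmonic. The paper's splitting is not needed for the estimate itself.

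Two small corrections.
\begin{itemize}
\item The Riesz transform must be $\wt D_\xi(-L_k)^{-1/2}$, not $D_\xi(-L_k)^{-1/2}$. Only $\wt D_\xi$ annihilates constants, and this is what makes $\partial_t u_{\mathcal R_j f}=-\wt D_{\xi_j}u_f$ exact without a mean-value correction.
\item For $g_{\Delta_k}$ you should not split $f$ into positive parts, because $g_{\Delta_k}$ is not sublinear. Instead, the bilinear identity $\Delta_k(u^2)=2\nabla u\cdot\nabla u+\sum_{\alpha\in R_+}k_\alpha|\alpha|^2 (u-s_\alpha u)^2/\bigl(4\sin^2\tfrac{\langle\alpha,\cdot\rangle}{2}\bigr)$ holds for complex $u$. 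It gives $g_{\Delta_k}(f)^2\le 2g_\nabla(f)^2+\sum_{\alpha}k_\alpha|\alpha|^2 g_0^\alpha(f)^2$ directly.
\end{itemize}
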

  The proof of (1) requires several steps and will take up the rest of this section. Part (2) will then be a consequence once we have established Riesz transforms in \cref{sec:riesz}.
We start with some lemmata concerning the space-time Laplacian $\Delta_{k}$.

\begin{lemma}\label{lem:laplacePpowerOfHarmonic}
  Suppose that $v\in C^{2}(\T\times \opint)$ is strictly positive and $\Delta_{k}$-harmonic, i.e. $\Delta_{k} v = 0$. Then for any $p>1$, 
  \begin{align*}
    \Delta_{k}(v^{p})= p(p-1)v^{p-2}|\nabla v|^{2} + pv^{p-1}L_{k}^{\operatorname{ref}}(v)-L_{k}^{\operatorname{ref}}(v^{p}). 
  \end{align*}
\end{lemma}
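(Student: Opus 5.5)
We use the splitting $L_k = L_k^{\operatorname{diff}} - L_k^{\operatorname{ref}}$, so that $\Delta_k = \mathcal{L} - L_k^{\operatorname{ref}}$ with the purely differential operator
\[
\mathcal{L} := L_k^{\operatorname{diff}} + \partial_t^2 = \Delta + \partial_t^2 + \sum_{\alpha\in R_+} k_\alpha \cot\frac{\langle\alpha,\cdot\rangle}{2}\,\partial_\alpha .
\]
The identity then follows by applying $\mathcal{L}$ and $L_k^{\operatorname{ref}}$ separately to $v^p$.

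Since $v>0$ and $v\in C^2$, the power $v^p$ is again $C^2$ for every real $p>1$, and all formulas below are pointwise identities. For the time being we work at regular points $x$, i.e. $\langle\alpha,x\rangle\notin 2\pi\Z$ for all $\alpha\in R$, where the coefficients of $\mathcal L$ and $L_k^{\operatorname{ref}}$ are finite.

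\emph{Differential part.} $\mathcal{L}$ is a second-order operator without zeroth-order term, and its principal part is the Euclidean space-time Laplacian $\Delta+\partial_t^2$. The chain rule therefore gives
\[
\mathcal{L}(v^p) = p v^{p-1}\mathcal{L}v + p(p-1)v^{p-2}|\nabla v|^2 .
\]
Two checks: the first-order terms $\cot\frac{\langle\alpha,\cdot\rangle}{2}\partial_\alpha$ satisfy the Leibniz rule and so contribute only to $pv^{p-1}\mathcal L v$; and $|\nabla v|^2=\sum_j|\partial_{\xi_j}v|^2+|\partial_t v|^2$ (with $v$ real, as it is positive) is exactly the squared norm coming from $\Delta+\partial_t^2$ in an orthonormal basis.

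\emph{Reflection part.} $\Delta_k v=0$ means $\mathcal{L}v = L_k^{\operatorname{ref}}v$. Substituting this,
\[
\Delta_k(v^p) = \mathcal L(v^p) - L_k^{\operatorname{ref}}(v^p) = p(p-1)v^{p-2}|\nabla v|^2 + p v^{p-1}L_k^{\operatorname{ref}}(v) - L_k^{\operatorname{ref}}(v^p),
\]
which is the claimed formula at all regular points.

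The only point needing care is at the singular hyperplanes $\langle\alpha,x\rangle\in 2\pi\Z$, where $\mathcal L$ and $L_k^{\operatorname{ref}}$ are individually singular. I would handle this by continuity. The left side $\Delta_k(v^p)$ is continuous, since $\Delta_k$ maps $C^2$ to continuous functions by the argument of \cref{diff_properties}. The terms $p(p-1)v^{p-2}|\nabla v|^2$ are continuous as well. The remaining combination
\[
p v^{p-1}L_k^{\operatorname{ref}}(v) - L_k^{\operatorname{ref}}(v^p) = \sum_{\alpha\in R_+} k_\alpha\frac{|\alpha|^2}{4\sin^2\frac{\langle\alpha,\cdot\rangle}{2}}\Bigl(v^p(s_\alpha x)-v^p(x) - p v^{p-1}(x)\bigl(v(s_\alpha x)-v(x)\bigr)\Bigr)
\]
has a finite continuous extension, as the Taylor argument in the positive maximum principle proof shows. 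The bracket is a second-order Taylor remainder of $s\mapsto s^p$, so it vanishes quadratically in the distance to the hyperplane, compensating the $\sin^{-2}$ factor. The identity thus holds on the dense set of regular points and extends to all of $\T\times\opint$ by continuity.
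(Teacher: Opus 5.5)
Your proof is correct and follows the paper's argument: apply the chain rule to the purely differential operator $L_k^{\operatorname{diff}}+\partial_t^2$, then use $\Delta_k v=0$ to replace $(L_k^{\operatorname{diff}}+\partial_t^2)v$ by $L_k^{\operatorname{ref}}v$. Your extra continuity argument at the singular hyperplanes goes beyond the paper, which computes only at regular points (all that is used later), and it is sound. It is cleanest to write each reflection bracket as $k_\alpha|\alpha|^2\,\Phi\bigl(v(x),v(s_\alpha x)\bigr)\,|d_\alpha v(x)|^2$, where $\Phi(a,b)=p(p-1)\int_0^1(1-s)\bigl(a+s(b-a)\bigr)^{p-2}\,ds$ is continuous on $]0,\infty[^2$. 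This gives continuity of the extension, not merely boundedness.
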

\begin{proof}
  A short calculation gives
  \begin{align*}
    \partial_{t}^{2}(v^{p})=p(p-1)v^{p-2}(\partial_{t}v)^{2}+pv^{p-1} \partial_{t}^{2}v
  \end{align*}
  and
  \begin{align*}
    L_{k}^{\text{diff}}(v^{p}) &= \Delta(v^{p})+\sum_{\alpha\in R_{+}}k_{\alpha}\cot\frac{\langle \alpha, \cdot\rangle}{2} \partial_{\alpha}(v^{p})\\
                               &= p(p-1)v^{p-2}|\nabla_{x}v|^{2} + pv^{p-1}L_{k}^{\text{diff}}(v).
  \end{align*}
  Thus combining the previous two equations with the harmonicity we obtain
  \begin{align*}
    \Delta_{k}(v^{p})&= (L_{k}^{\text{diff}}+\partial_{t}^{2}-L_{k}^{\text{ref}})(v^{p})\\
                     &=p(p-1)v^{p-2}|\nabla v|^{2} + pv^{p-1}(L_{k}^{\text{diff}}+\partial_{t}^{2})(v)-L_{k}^{\text{ref}}(v^{p})\\
                     &=p(p-1)v^{p-2}|\nabla v|^{2}+ pv^{p-1}(L_{k}+L_{k}^{\text{ref}}+\partial_{t}^{2})(v)-L_{k}^{\text{ref}}(v^{p})\\
                     &= p(p-1)v^{p-2}|\nabla v|^{2} + pv^{p-1}L_{k}^{\text{ref}}(v)-L_{k}^{\text{ref}}(v^{p}).
  \end{align*}
\end{proof}
We note that $L_{k}$ acts on $W$-invariant functions belonging to $C^{2}(\T\times\opint)^{W}$ as a differential operator and has a significantly simpler form on this closed subspace. Thus, if we were only interested in a $W$-invariant setting, this would allow for a proof closer to that in \cite{St70}. We shall come back to the $W$-invariant setting in Section \ref{sec:symmetric}.
In the present situation, however, we have to deal with the reflection terms. For this, we adapt ideas from \cite{S05} and \cite{LZL17}.

\begin{lemma}\label{lem:integraltDelta}
  Let $v\in C^{2}(\T\times [0, \infty])$ and assume that $\,t\partial_{t}v(\cdot, t)\to 0$ uniformly on $\T$ as $t\to 0$ or $t\to\infty$. Then
  \begin{align*}
    \int_{0}^{\infty} \!\int_{\T} t\,\Delta_{k}v(x, t)\delta_{k}(x) \,dx \,dt \,= \int_{\T} (v(x, 0)-v(x, \infty)) \delta_{k}(x)\, dx. 
  \end{align*}
  In particular, the integral on the left side converges.
\end{lemma}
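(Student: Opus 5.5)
The plan is to split $\Delta_k = L_k + \partial_t^2$ and treat the two pieces separately after integrating against $\delta_k(x)\,dx$. First, for each fixed $t\in\opint$, the spatial piece integrates to zero:
\[
\int_{\T} L_k v(x,t)\,\delta_k(x)\,dx = 0 .
\]
This follows from $L_k = \sum_j D_{\xi_j}^2 + |\rho|^2$ together with the anti-symmetry in \cref{antisymm}. For $g=v(\cdot,t)\in C^2(\T)$, the function $D_{\xi_j} g$ lies in $C^1(\T)$ by \cref{diff_properties}. Pairing with the constant function $1$ gives
\[
\langle D_{\xi_j}^2 g, 1\rangle_{\delta_k} = -\langle D_{\xi_j} g, D_{\xi_j}1\rangle_{\delta_k}, \qquad D_{\xi}1 = -i\langle\rho,\xi\rangle .
\]
Since $d_\alpha 1=0$, we get
\[
\langle D_{\xi_j} g, D_{\xi_j}1\rangle_{\delta_k} = i\langle \rho,\xi_j\rangle\langle D_{\xi_j} g,1\rangle_{\delta_k} = -i\langle\rho,\xi_j\rangle\langle g, D_{\xi_j}1\rangle_{\delta_k} = -\langle\rho,\xi_j\rangle^2\langle g,1\rangle_{\delta_k}.
\]
Hence $\langle D_{\xi_j}^2 g,1\rangle_{\delta_k} = \langle\rho,\xi_j\rangle^2\langle g,1\rangle_{\delta_k}$ up to sign. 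Tracking the signs, and using that the $D_\xi$ are anti-symmetric so that $D_\xi^2$ is symmetric, gives $\langle D_{\xi_j}^2 g,1\rangle = \langle g, D_{\xi_j}^21\rangle = -\langle\rho,\xi_j\rangle^2\langle g,1\rangle$. Summing over $j$ yields $-|\rho|^2\langle g,1\rangle$, which the $+|\rho|^2$ term cancels exactly. Equivalently, $L_k 1 = 0$ and $L_k$ is symmetric on $C^2(\T)$.

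It remains to handle the time piece. We need
\[
\int_0^\infty\!\int_{\T} t\,\partial_t^2 v\,\delta_k\,dx\,dt = \int_{\T}\bigl(v(x,0)-v(x,\infty)\bigr)\delta_k(x)\,dx .
\]
Work on $[\varepsilon, N]$ and use Fubini, which is justified since on a compact time interval everything is continuous on $\T\times[\varepsilon,N]$. Integrating by parts in $t$ gives
\[
\int_\varepsilon^N t\,\partial_t^2 v\,dt = \bigl[t\,\partial_t v\bigr]_\varepsilon^N - \bigl(v(x,N)-v(x,\varepsilon)\bigr).
\]
By hypothesis, $t\,\partial_t v\to 0$ uniformly as $t\to0$ and as $t\to\infty$. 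Since $v\in C^2(\T\times[0,\infty])$, we also have $v(\cdot,N)\to v(\cdot,\infty)$ and $v(\cdot,\varepsilon)\to v(\cdot,0)$ uniformly. Therefore the $\delta_k$-integrals over $\T$ converge as $\varepsilon\to0$ and $N\to\infty$. Combined with the vanishing of the spatial part, this shows that the improper integral $\lim_{\varepsilon\to0,\,N\to\infty}\int_\varepsilon^N\int_\T t\,\Delta_k v\,\delta_k\,dx\,dt$ exists and has the claimed value. This is how the convergence assertion should be read: as an iterated improper integral, first over $\T$ and then over $t$.

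The main subtlety is the spatial step, namely justifying that $\int_\T L_k g\,\delta_k = 0$ for merely $C^2$ functions and general $k\ge0$. Here one must check that $D_{\xi_j}g\in C^1(\T)$ so that \cref{antisymm} applies twice; \cref{diff_properties} gives exactly this. A secondary point is that $v(\cdot,t)$ must be $C^2$ in $x$ for each $t>0$, which holds by the assumption $v\in C^2(\T\times[0,\infty])$.
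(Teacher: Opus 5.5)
Your proposal is correct and follows the paper's proof essentially step for step. The spatial part $\int_\T L_k v(\cdot,t)\,\delta_k\,dx$ vanishes by the anti-symmetry of the $D_\xi$: the paper iterates $\int_\T D_\xi f\,\delta_k\,dx=-i\langle\rho,\xi\rangle\int_\T f\,\delta_k\,dx$, which amounts to your $L_k1=0$ plus symmetry of $L_k$. The time part comes from integrating $t\,h''(t)$ by parts, with $h(t)=\int_\T v(x,t)\delta_k(x)\,dx$, which is what your truncation to $[\varepsilon,N]$ does. One small slip: the last equality in your displayed chain drops the complex conjugate on $D_{\xi_j}1$. In fact $\langle D_{\xi_j}g,D_{\xi_j}1\rangle_{\delta_k}=+\langle\rho,\xi_j\rangle^2\langle g,1\rangle_{\delta_k}$, which gives $\langle D_{\xi_j}^2g,1\rangle_{\delta_k}=-\langle\rho,\xi_j\rangle^2\langle g,1\rangle_{\delta_k}$ directly, consistent with your symmetry argument.
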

\begin{proof}
Consider $f\in C^{2}(\T)$. By the anti-symmetry of the Cherednik operators $D_{\xi}$, we have 
  \begin{align*}
    \int_{\T} (D_{\xi} f)\,\delta_{k}\, dx \,=\,-\int_{\T}f\cdot \ol{D_{\xi}1}\,\delta_{k}\ dx \,=\, -i\langle\rho, \xi\rangle \int_{\T} f\,\delta_{k}\, dx 
  \end{align*}
  and
  \begin{align*}
    \int_{\T}(D_{\xi}^{2}f)\,\delta_{k}\, dx \,=\, -\langle \rho, \xi\rangle^{2}\int_{\T}f\,\delta_{k}\, dx. 
  \end{align*}
  Thus for $f\in C^{2}(\T)$ we obtain, with an arbitrary orthonormal basis $(\xi_{j})$ of $\a$,
  \begin{align}\label{eqn:LkIntegralZero}
    \int_{\T}(L_{k}f)\, \delta_{k}\, dx\, =\,\int_{\T}\Bigl(\sum_{j=1}^{n}D_{\xi_{j}}^{2}+|\rho|^{2}\Bigr)(f)\, \delta_{k}\, dx\, = 0. 
  \end{align}
  We define $h\in C^{2}([0,\infty])$ as the  integral
  \begin{align*}
    h(t)\coloneqq \int_{\T}v(x, t)\,\delta_{k}(x)\, dx.
  \end{align*}
  Then by our assumptions, $\,th'(t)\to 0$ as $t\to 0$ or $t\to\infty$.
  From \eqref{eqn:LkIntegralZero} and integration by parts,  we get
  \begin{align*}
    \int_{0}^{\infty}t\,\Delta_{k}v(x, t)\,\delta_{k}(x)\, dx\, dt =& \int_{0}^{\infty}t \int_{\T}\partial^{2}_{t}v(x, t)\,\delta_{k}(x)\, dx\,dt \\
    =&\int_{0}^{\infty}th''(t)\, dt\\
    =&\big[th'(t)\big]_{0}^{\infty}-\int_{0}^{\infty} h'(t)\, dt\\
    =&\ h(0)-h(\infty).
  \end{align*}\end{proof}

As the following result shows, our various $g$-functions are closely related (compare this with Lemma 2.1. of \cite{LZL17}).

\begin{proposition}\label{prop:gineq}
  Let $c := \sum_{\alpha\in R_+}k_\alpha|\alpha|. $  Then for $\xi\in \frak a$ with $|\xi|\leq 1$ and $f\in L^{p}(\T, \delta_{k})$ with $1\leq p<\infty$, 
    \begin{enumerate}
    \item $g_{\wt D_{\xi}}(f)\leq g_{\partial_{\xi}}(f)+cg_{0}(f)$ and $g_{\partial_{\xi}}(f)\leq g_{\wt D_{\xi}}(f)+ c g_{0}(f)$.
    \item $g_{\wt \nabla_{k}}(f) \leq g_{\nabla}(f)+cg_{0}(f)$ and $g_{\nabla}(f)\leq g_{\wt\nabla_{k}}(f)+cg_{0}(f)$.
    \item If $f$ is real-valued, then $g_{\Delta_{k}}(f)^{2}= 2g_{\nabla}(f)^{2}+\sum_{\alpha\in R_{+}}k_{\alpha}|\alpha|^{2}g^{\alpha}_{0}(f)^{2}$.
  \end{enumerate}
  \end{proposition}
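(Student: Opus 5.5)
The plan is to prove all three parts pointwise in $(x,t)$ for $u=u_f$ and then integrate against $t\,dt$. Recall that $u_f\in C^\infty(\T\times\opint)$. By \cref{diff_properties}, each $d_\alpha u_f(\cdot,t)$ is continuous on $\T$. So all pointwise identities below only have to be checked on the dense set of regular $x$ (where $\langle\alpha,x\rangle\notin 2\pi\Z$ for all $\alpha$), and they then extend by continuity.

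For (1), I start from the definition
$$\wt D_\xi u=\partial_\xi u+\sum_{\alpha\in R_+} ik_\alpha\langle\alpha,\xi\rangle\, d_\alpha u .$$
Since $|\xi|\le 1$ gives $|\langle\alpha,\xi\rangle|\le|\alpha|$, this yields pointwise
$$|\wt D_\xi u|\le|\partial_\xi u|+\sum_{\alpha\in R_+,\,k_\alpha>0}k_\alpha|\alpha|\,|d_\alpha u| .$$
Taking the $L^2(]0,\infty[,t\,dt)$-norm and applying Minkowski's inequality gives
$$g_{\wt D_\xi}(f)\le g_{\partial_\xi}(f)+\sum_{\alpha:\,k_\alpha>0} k_\alpha|\alpha|\,g_0^\alpha(f)\le g_{\partial_\xi}(f)+c\,g_0(f).$$
The reverse inequality follows in the same way from $\partial_\xi u=\wt D_\xi u-\sum_\alpha ik_\alpha\langle\alpha,\xi\rangle d_\alpha u$.

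For (2), I argue the same way in $\C^{n+1}$. The difference $\wt\nabla_k u-\nabla u$ is the vector $\sum_\alpha ik_\alpha\, d_\alpha u\,(\langle\alpha,\xi_1\rangle,\dots,\langle\alpha,\xi_n\rangle,0)^T$. The Euclidean norm of $(\langle\alpha,\xi_j\rangle)_j$ equals $|\alpha|$ because $(\xi_j)$ is orthonormal, so $\bigl||\wt\nabla_k u|-|\nabla u|\bigr|\le\sum_\alpha k_\alpha|\alpha||d_\alpha u|$. Minkowski's inequality in $L^2(t\,dt)$ then gives both inequalities.

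For (3), take $f$ real-valued. Then $u=u_f$ is real-valued, because the Poisson kernel $P_k$ is real. The computation in the proof of \cref{lem:laplacePpowerOfHarmonic} with $p=2$ does not use positivity of $v$. It only uses the chain rule for $L_k^{\rm diff}$ and $\partial_t^2$, together with $\Delta_k u=0$. Hence
$$\Delta_k(u^2)=2|\nabla u|^2+2u\,L_k^{\rm ref}u-L_k^{\rm ref}(u^2).$$
The key algebraic step is the identity
$$2u\,(u-s_\alpha u)-\bigl(u^2-(s_\alpha u)^2\bigr)=(u-s_\alpha u)^2 .$$
Combined with $|1-e^{-i\langle\alpha,x\rangle}|^2=4\sin^2\frac{\langle\alpha,x\rangle}{2}$, it yields
$$\Delta_k(u^2)=2|\nabla u|^2+\sum_{\alpha\in R_+}k_\alpha|\alpha|^2\,|d_\alpha u|^2\ \ge 0 .$$
Since this is nonnegative, the absolute value in the definition of $g_{\Delta_k}$ can be dropped. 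Multiplying by $t$ and integrating over $]0,\infty[$ gives the claimed identity, where terms with $k_\alpha=0$ simply do not contribute.

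The main point needing care is precisely this reflection-term bookkeeping in (3): the sign conventions $L_k=L_k^{\rm diff}-L_k^{\rm ref}$, and the justification at singular points. The latter is handled by the continuity of $d_\alpha u$ from \cref{diff_properties}.
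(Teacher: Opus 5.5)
Your proposal is correct and follows essentially the same route as the paper. For (1) and (2) you prove pointwise bounds in $(x,t)$ for $u=u_f$ and then apply Minkowski's inequality in $L^2(]0,\infty[,t\,dt)$; for (3) you use the identity $2u(u-s_\alpha u)-(u^2-(s_\alpha u)^2)=(u-s_\alpha u)^2$ together with $|1-e^{-i\langle\alpha,x\rangle}|^2=4\sin^2\frac{\langle\alpha,x\rangle}{2}$. The differences are only cosmetic: in (2) you apply the triangle inequality in $\C^{n+1}$ directly instead of expanding $|\wt\nabla_k u|^2$, in (3) you reuse the computation of \cref{lem:laplacePpowerOfHarmonic} at $p=2$ (correctly noting that positivity is not needed there) instead of recomputing $L_k(u^2)$, and your continuity argument at singular points is a harmless extra precaution.
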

	\begin{proof}
  We consider the Hilbert space $\H:=L^{2}(\opint, \,t\,dt)$ with its $L^{2}$-norm $\|\cdot\|_{\H}$ and inner product $\langle \cdot, \cdot\rangle_{\H}$. For measurable functions $g:\,]0,\infty[\to \mathbb C$, we use the notation
  $$ \|g\|_\mathcal H := \Bigl(\int_0^\infty t|g(t)|^2dt\Bigr)^{1/2}$$
  also if this integral is infinite. 
  Recalling Definition \ref{grad_def}, we further write $$\widetilde D_{\xi}=\partial_{\xi}+D_{\xi}^{\operatorname{ref}} \quad \text{ with } \quad  D_{\xi}^{\text{ref}} \coloneqq \sum_{\alpha\in R_{+}}ik_{\alpha}\langle \alpha, \xi\rangle d_\alpha$$ 
  and put $u(x, t)\coloneqq u_f(x,t)=P(t)f(x)$. With these notations, we have
    $$\| \partial_\xi u(x,.)\|_{\H}\, = \, g_{\partial_\xi}(f)(x) $$
    and 
 $$  \| D_\xi^{\text{ref}} u(x,.)\|_{\H} \,\leq \, \sum_{\alpha\in R_+} k_\alpha |\langle \alpha, \xi \rangle |\, g_0^\alpha(f)(x) \, \leq \, cg_0(f)(x).$$
  Hence
$$ g_{\wt D_{\xi}}(f)(x) = \|(\partial_{\xi}+D_{\xi}^{\text{ref}}\,)u(x, .)||_{\H} \, \leq  g_{\partial_\xi}(f)(x) + \,c g_0(f)(x). $$
   The second inequality in (1) is proven in the same way.
  We further calculate
  \begin{align*}
    |\wt\nabla_{k}u|^{2} = |\nabla u|^{2}+2\sum_{\alpha\in R_{+}} k_{\alpha} \,\Im\bigg(\sum_{j=1}^{n} \langle \alpha, \xi_{j}\rangle\,\partial_{\xi_{j}}u\ \overline{d_{\alpha}u}\bigg)+\sum_{\alpha, \beta\in R_{+}} k_{\alpha}k_{\beta}\langle \alpha,\beta\rangle d_{\alpha}u \cdot \overline{d_{\beta}u}. 
  \end{align*}
  It follows that 
  \begin{equation}\label{widetilde_Abschaetzung} |\wt\nabla_k u| \,\leq |\nabla u| + \,c'\cdot\sum_{\alpha \in R_+: k_\alpha>0} |d_\alpha u|\end{equation}
  and therefore 
  \begin{align*} g_{\wt\nabla_k}(f)(x) &= \|\wt\nabla_ku(x,.)\|_\H \,\leq \, \|\nabla u(x, .)\|_\H + \,c\cdot \max_{\alpha\in R_+:k_\alpha>0} \|d_\alpha u(x,.)\|_\H \\
  &=\, 	g_{\nabla}(f)(x) + c \,g_0(f)(x).
  \end{align*}
The second inequality of (2) is proven in the same way. For part (3) observe first that $u$ is real-valued since $f$ is real-valued. Thus 
  \begin{align*}
    L_{k}(u^{2}) &= \Delta(u^{2}) + \sum_{\alpha\in R_{+}} k_{\alpha}\cot\frac{\langle \alpha, \cdot\rangle }{2}\partial_{\alpha}(u^{2})-\sum_{\alpha\in R_{+}}k_{\alpha}\frac{|\alpha|^{2}}{4\sin^{2}\frac{\langle \alpha, \cdot\rangle }{2}}(u^{2}-s_\alpha u^{2})\\
                 &=2uL_{k}u+2|\nabla_{x}u|^{2}+\sum_{\alpha\in R_{+}} k_{\alpha}\frac{|\alpha|^{2}}{4\sin^{2}\frac{\langle \alpha,\cdot\rangle }{2}}(u-s_\alpha u)^{2},
  \end{align*}
  because $(u-s_{\alpha}u)^{2}-2u (u-s_{\alpha}u)=(s_{\alpha}u)^{2}-u^{2}$. Since $(L_{k}+\partial_{t}^{2})u = 0$, we conclude
  \begin{align*}
    \Delta_{k}(u^{2})=(L_{k}+\partial_{t}^{2})(u^{2})=2|\nabla u|^{2} + \sum_{\alpha\in R_{+}}k_{\alpha}\frac{|\alpha|^{2}}{4\sin^{2}\frac{\langle \alpha, \cdot\rangle }{2}} \vert u-s_\alpha u\vert^{2}. 
  \end{align*}
  Upon noting that $\,4\sin^{2}\frac{\langle \alpha, x\rangle}{2} = |1-e^{-i\langle \alpha, x\rangle}|^{2}$, we obtain (3).
\end{proof}

In the next proposition, we prove $L^{p}$-boundedness of $g_{0}$ and $g_{\nabla}$ in the case $1<p\leq 2$. 

\begin{proposition}\label{prop:gNablaBoundPLess2}
  For $1<p\leq 2$ and $\bullet\in\{0, \nabla\}$ there exists a constant $A_{p}>0$, depending on $R$ and $k$, such that for all $f\in L^{p}(\T,\delta_{k}),$
  \begin{align*}
    \|g_{\bullet}(f)\|_{p} \leq A_{p}\|f\|_{p}\,. 
  \end{align*}
 Here again $\|.\|_p = \|.\|_{p, \delta_k}$.
  \end{proposition}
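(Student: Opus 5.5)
We follow Stein's argument for $1<p\le 2$, adapted to the reflection terms. By density and Fatou's lemma it suffices to treat $f\in C^\infty(\T)$. Splitting into real and imaginary parts and writing $f=f^+-f^-$, we may also assume $f\ge 0$. Replacing $f$ by $f+\ve$ and letting $\ve\to 0$ at the end, we may assume $f\ge\ve>0$. Then $u=u_f>0$ by positivity of $P_k$, and $u\in C^\infty(\T\times[0,\infty])$ by \cref{lem:smoothAtTimeBoundary}.

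The key is a pointwise bound on $\Delta_k(u^p)$. By \cref{lem:laplacePpowerOfHarmonic},
\[
\Delta_k(u^p)=p(p-1)u^{p-2}|\nabla u|^2+\sum_{\alpha\in R_+}k_\alpha\frac{|\alpha|^2}{4\sin^2\frac{\langle\alpha,\cdot\rangle}{2}}\Bigl(pu^{p-1}(u-s_\alpha u)-(u^p-(s_\alpha u)^p)\Bigr).
\]
Set $a=u(x,t)$ and $b=u(s_\alpha x,t)$. Convexity of $s\mapsto s^p$ gives $b^p-a^p-pa^{p-1}(b-a)\ge c_p\,(a+b)^{p-2}(a-b)^2$ for $1<p\le2$; this follows from Taylor's formula with integral remainder and $\xi^{p-2}\ge(a+b)^{p-2}$ for $\xi$ between $a$ and $b$. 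Since $(a+b)^{p-2}\ge (u^*(x)+u^*(s_\alpha x))^{p-2}$ with $u^*=P^*f$, we obtain
\[
\Delta_k(u^p)(x,t)\ \ge\ c\, (P^*f(x)+P^*f(s_\alpha x))^{p-2}\Bigl(|\nabla u(x,t)|^2+k_\alpha|\alpha|^2\,|d_\alpha u(x,t)|^2\Bigr)
\]
for each $\alpha$ with $k_\alpha>0$. In particular the right-hand side is non-negative. Define $F(x):=P^*f(x)+\max_{w\in W}P^*f(wx)$. Then $F\ge P^*f$, $F\ge u(x,t)$, and $\|F\|_p\le C\|f\|_p$ by \cref{lem:maxfuncContOp} and $W$-invariance of $\delta_k$. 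Multiplying by $t$ and integrating in $t$ yields the pointwise bound
\[
g_\nabla(f)(x)^2+g_0(f)(x)^2\le C\,F(x)^{2-p}\,I(x),\qquad I(x):=\int_0^\infty t\,\Delta_k(u^p)(x,t)\,dt .
\]

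Next apply \cref{lem:integraltDelta} to $v=u^p$. This $v$ is $C^2$ on $\T\times[0,\infty]$ since $u\ge\ve$. The hypothesis $t\partial_t v\to0$ holds at $0$ by boundedness of derivatives, and at $\infty$ by the decay in \cref{lem:smoothAtTimeBoundary}. The lemma gives $\int_\T I\,\delta_k\,dx\le\int_\T f^p\delta_k\,dx$, using $I\ge0$. Hölder's inequality with exponents $2/p$ and $2/(2-p)$ then gives
\[
\int g_\bullet(f)^p\delta_k\le C\int F^{p(2-p)/2}I^{p/2}\delta_k\le C\|F\|_p^{p(2-p)/2}\Bigl(\int I\,\delta_k\Bigr)^{p/2}\le C\|f\|_p^{p}.
\]
The case $p=2$ is immediate from this computation, with no maximal function needed.

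The main obstacle is the convexity step for the reflection terms. The difference $pu^{p-1}(u-s_\alpha u)-(u^p-(s_\alpha u)^p)$ must be bounded below by a multiple of $(u-s_\alpha u)^2$ times a power of a quantity that is controlled in $L^p$. This forces the factor $(u(x)+u(s_\alpha x))^{p-2}$, which is why $F$ is built as a $W$-symmetrized maximal function. One must also check that the singular factor $\sin^{-2}$ causes no integrability issue. This holds because we only use non-negativity together with the identity of \cref{lem:integraltDelta}, and $\Delta_k(u^p)$ is smooth by \cref{diff_properties}. Finally, the reduction from general $f$ to $f\ge\ve$ uses sublinearity: $g_\bullet(f)\le g_\bullet(f^+)+g_\bullet(f^-)$ and $g_\bullet(f+\ve)=g_\bullet(f)$, since the constant $\ve$ contributes nothing to any derivative or difference.
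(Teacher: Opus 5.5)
Your proof is correct, but it handles the reflection terms differently from the paper.

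\textbf{The paper's route.} It splits $U_p=pu^{p-1}L_k^{\text{ref}}(u)-L_k^{\text{ref}}(u^p)$ into two pieces. The first, $U_p^{(1)}$, is built from $(u^{p-1}-s_\alpha u^{p-1})(u-s_\alpha u)\geq 0$. The remainder $U_p^{(2)}$ has $\alpha$-th summand odd under $s_\alpha$. The paper then works with $I(x)=\int_0^\infty t\,(\Delta_k(u^p)-U_p^{(2)})\,dt$. It removes $U_p^{(2)}$ by integrating over the $W$-invariant exhaustions $\Omega_N$ of the regular set. Finally it controls $g_0$ via $|a^{p-1}-b^{p-1}|\geq (p-1)\max\{a,b\}^{p-2}|a-b|$.

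\textbf{Your route.} You observe that each summand of $U_p$ is the tangent-line defect $b^p-a^p-pa^{p-1}(b-a)$ of the convex function $s\mapsto s^p$. Hence $U_p\geq 0$ pointwise and $\Delta_k(u^p)\geq 0$. Taylor's formula gives the quantitative lower bound $\tfrac{p(p-1)}{2}(a+b)^{p-2}(a-b)^2$.

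\textbf{What each buys.} Your argument is shorter and cleaner. It needs no cancellation of an odd term, no limits over $\Omega_N$, and no care with the factor $\sin^{-2}$ near the walls. Tonelli applies directly to $t\,\Delta_k(u^p)\geq 0$, and $g_\nabla$ and $g_0$ come out of a single pointwise inequality. It also makes explicit the stronger fact that $u^p$ is $\Delta_k$-subharmonic, which the paper never states. The paper's decomposition stays closer to the template of the rational Dunkl arguments it adapts, trading pointwise positivity for an integral cancellation.

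\textbf{Order of reductions.} $f^{\pm}$ is not smooth when $f$ is, so the reductions should be reordered. First reduce to $0\leq f\in L^p(\T,\delta_k)$, and only then approximate. A convenient choice is $P(1/n)f$. It is smooth and nonnegative, satisfies $\|P(1/n)f\|_p\leq\|f\|_p$, and has $u_{P(1/n)f}(x,t)=u_f(x,t+1/n)$. Hence $g_\bullet(P(1/n)f)\uparrow g_\bullet(f)$ by monotone convergence.
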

\begin{proof} We essentially follow the proof of \cite[p. 50f]{St70}, but with a slight variation similar to \cite{S05} and \cite{LZL17}. 
It suffices to consider $f\in C^{\infty}(\T)$ with $f\geq\ve>0$, so that $u\in C^{\infty}(\T\times [0,\infty])$ and $u(x, t)\geq\ve >0$ by the strict positivity of the Poisson kernel. We define $F\in C^{\infty}(\T\times [0,\infty])$ by $$F(x, t):= u(x, t)^{p}.$$ Note that $t\partial_{t}F(\cdot, t)\to 0$ uniformly on $\T$ as $t\to 0$ or $t\to\infty$ as a consequence of \cref{lem:smoothAtTimeBoundary}.
Then put $$U_{p}\coloneqq U_{p}^{(1)}+U_{p}^{(2)},$$ where 
\begin{align*}
  U_{p}^{(1)} &\coloneqq \frac{p}{2}\sum_{\alpha\in R_{+}}k_{\alpha}|\alpha|^{2}\frac{(u^{p-1}-s_{\alpha} u^{p-1})(u-s_{\alpha}u)}{4\sin^{2}\frac{\langle \alpha, \cdot\rangle}{2}},
\end{align*}
and
\begin{align*}
  U_{p}^{(2)} &\coloneqq \frac{p}{2}\sum_{\alpha\in R_{+}}k_{\alpha}|\alpha|^{2}\frac{(u^{p-1}+s_{\alpha}u^{p-1})(u-s_{\alpha}u)}{4\sin^{2}\frac{\langle \alpha,\cdot\rangle }{2}} - \sum_{\alpha\in R_{+}}k_{\alpha}|\alpha|^{2}\frac{u^{p}-s_{\alpha}u^{p}}{4\sin^{2}\frac{\langle \alpha, \cdot\rangle}{2}}.
\end{align*}
Note that in fact
\begin{align*}
  U_{p} = pu^{p-1}L_{k}^{\operatorname{ref}}(u)-L_{k}^{\operatorname{ref}}(u^{p}),
\end{align*}
as we just regrouped some terms.
Furthermore, observe that $U_{p}^{(1)}\geq 0$ as the expressions $u^{p-1}-s_{\alpha}u^{p-1}$ and $u-s_{\alpha}u$ have the same sign.

According to \cref{lem:laplacePpowerOfHarmonic}, 
\begin{align}\label{eqn:deltaU1U2split}
  \Delta_{k}F-U_{p}^{(2)}= p(p-1)u^{p-2}|\nabla u|^{2} + U_{p}^{(1)} \geq 0.
\end{align}
In the following integrations over $\mathbb T,$ we shall need to approximate the domain of integration in order to avoid singularities. A dense open subset of $\mathbb T$ is given by $W.\alc^{\circ}$ with the open
fundamental alcove 
$\,\alc^{\circ}\coloneqq\{x\in\a : 0< \langle \alpha, x\rangle < 2\pi\ \> \forall \alpha\in R_{+}\}.$  
In this spirit, we consider $\Omega_{N}:=W.A_{N}$ for $N\in \mathbb N,$ where
\begin{align*}
  A_{N}\coloneqq \{x\in \a : \tfrac 1 N<\langle \alpha, x\rangle <2\pi-\tfrac 1 N\ \text{for all}\ \alpha\in R_{+}\}.
\end{align*}
Then
\begin{align*}
  \int_{\Omega_{N}} L_{k}^{\text{ref}}(u^{p})\,\delta_{k}\, dx\, &=\, \sum_{\alpha\in R_{+}} k_{\alpha}|\alpha|^{2}\int_{\Omega_{N}} \frac{u^{p}-s_{\alpha}(u^{p})}{4\sin^{2}\frac{\langle \alpha,\cdot\rangle }{2}}\,\delta_{k}\, dx = \,0,
\end{align*}
as $\Omega_N$ is $W$-invariant and the integrand is odd with respect to $x\mapsto s_{\alpha}x$. Similarly,
\begin{align*}
  \int_{\Omega_{N}} \frac{(u^{p-1}+s_{\alpha} u^{p-1})(u-s_{\alpha}u)}{4\sin^{2}\frac{\langle \alpha, \cdot\rangle}{2}}\, \delta_{k}\, dx \, = 0.
\end{align*}
Consequently, 
\begin{align*}
  \int_{\Omega_{N}} U^{(2)}_{p}(x, t)\,\delta_{k}(x)\ dx = 0. 
\end{align*}
Indeed, the rearranging of terms $U_{p}=U_{p}^{(1)}+U_{p}^{(2)}$ was precisely done in this fashion so that $U_{p}^{(2)}$ contains terms invisible for integrals over $W$-invariant domains and $U_{p}^{(1)}$ is non-negative.

We write $f^{*} = P^{*}\!f$ for the maximal function of $f$.  Using \cref{eqn:deltaU1U2split} we calculate
\begin{align*}
  (g_{\nabla}(f)(x))^{2} &= \,\int_{0}^{\infty} t|\nabla u(x, t)|^{2}\, dt\\
                         &\leq \,\frac{1}{p(p-1)} \int_{0}^{\infty} t u(x, t)^{2-p}(\Delta_{k} u^{p}(x, t)-U^{(2)}_{p}(x, t))\, dt\\
                         &\leq \,A_{p} (f^{*}(x))^{2-p} \int_{0}^{\infty} t\,(\Delta_{k}F(x, t)-U^{(2)}_{p}(x, t))\, dt
\end{align*}
with $\, A_p = \frac{1}{p(p-1)}, $ because $u(x, t)\leq f^{*}(x)$. 

Now put 
$$I(x)\coloneqq \int_{0}^{\infty}t\,(\Delta_{k}F(x, t)-U^{(2)}_{p}(x, t))\ dt \geq 0.$$
In consequence of \cref{lem:integraltDelta}, we obtain
\begin{align*}
  \int_{\T} I(x)\,\delta_{k}(x)\, dx \,&=\,\int_{0}^{\infty} \int_{\T} t\,(\Delta_{k}F - U^{(2)}_{p})\, \delta_{k}\, dx\, dt\\
                                       &=\,\lim_{N\to\infty} \int_{0}^{\infty}\int_{\Omega_{N}} t\,\Delta_{k}F\,\delta_{k}\, dx\, dt - \lim_{N\to\infty}\int_{0}^{\infty}t\int_{\Omega_{N}} U^{(2)}_{p}\, \delta_{k}\, dx\, dt\\
                                    &\leq\, \int_{0}^{\infty}\int_{\T}t\,\Delta_{k}F\,\delta_{k}\, dx\, dt\\
                                   &=\,\int_{\T} F(x, 0)\,\delta_{k}(x)\ dx - \int_{\T} F(x, \infty)\,\delta_{k}(x)\, dx\\
                                   &\leq\, \int_{\T} F(x, 0)\,\delta_{k}(x)\, dx = \|f\|_{p}^{p}.
\end{align*}
Here the change of the order of integration in the first equality  is justified by the nonnegativity of the integrand. 
Thus by H\"older's inequality and  \cref{lem:maxfuncContOp}, 
\begin{align}\label{ineq:gnablaCalculation}
  \|g_{\nabla}(f)\|^{p}_{p} &\leq A'_{p} \int_{\T} (f^{*}(x))^{(2-p)p/2} I(x)^{p/2}\,\delta_{k}(x)\, dx\nonumber\\
                                     &\leq A'_{p} \left(\int_{\T} |f^{*}(x)|^{p}\,\delta_{k}(x)\, dx\right)^{(2-p)/2} \left(\int_{\T} I(x)\,\delta_{k}(x)\, dx\right)^{p/2}\nonumber\\
                                     &\leq A_{p}'' \,\|f\|_{p}^{p(2-p)/2}\|f\|_{p}^{p^{2}/2} = A''_{p} \|f\|_{p}^{p}. 
\end{align}
So $g_{\nabla}$ is $L^{p}$-continuous for $1<p\leq 2$. 

For $g_{0}$ we proceed in a similar fashion: 
In view of \cref{eqn:deltaU1U2split} it suffices to prove that for each $\alpha\in R_{+}$,
\begin{align*}
  |d_{\alpha}u(x, t)|^{2}\leq A_{p} f^{**}(x)^{2-p}\, U_{p}^{(1)}(x, t)
\end{align*}
with some function $f^{**}$ satisfying $\|f^{**}\|_{p}\leq C_{p}\|f\|_{p}$. Indeed, we may then employ \cref{eqn:deltaU1U2split} and perform the same calculation as in \cref{ineq:gnablaCalculation} with $g_{0}$ in place of $g_{\nabla}$, since
\begin{align*}
  U_{p}^{(1)}\leq \Delta_{k}F-U_{p}^{(2)}. 
\end{align*}

But for $1<p\leq 2$ and $a, b\geq 0$ we have
\begin{align*}
  |a^{p-1}-b^{p-1}| \geq (p-1)\max\{a, b\}^{p-2}|a-b| 
\end{align*}
by integrating the function $s\mapsto s^{p-2}$. Thus in particular,
\begin{align*}
  (u^{p-1}(x, t)-u^{p-1}(s_{\alpha}x, t))(u(x, t)-u(s_{\alpha}x, t)) \geq (p-1) (\max_{w\in W} f^{*}(wx))^{p-2} (u(x, t)-u(s_{\alpha}x, t))^{2}. 
\end{align*}
The function $f^{**}(x)\coloneqq \max_{w\in W} f^{*}(wx)$ obviously satisfies the desired properties. This finishes the proof.
\end{proof}

By virtue of \cref{prop:gineq}, this result now implies part (1) of \cref{thm:gFunctionBound}.

Although we shall not need the next Lemma, it exemplifies how the modified gradient $\wt\nabla_{k}$ is adapted to the structure of the Poisson semigroup. The reader should compare this with \cite[p. 53]{St70}.
\begin{lemma}[Subharmonicity]\label{lem:gNablaKSubharmonic}
  Let $f\in C^{\infty}(\T)$ and $u(x, t)=P(t)f(x)$. Then
  \begin{align*}
    |\wt\nabla_{k} u(x, t)|^{2} \leq P(\tfrac t 2)(|\wt\nabla_{k} u(\cdot, \tfrac t 2)|^{2})(x).
  \end{align*}
\end{lemma}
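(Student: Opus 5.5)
The plan follows Stein's argument for compact Lie groups: commute the Cherednik operators with the Poisson semigroup, then apply Cauchy--Schwarz to a positivity-preserving Markov operator.

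First, since $f\in C^\infty(\T)$, \cref{lem:smoothAtTimeBoundary} shows that $u(\cdot,s)=P(s)f\in C^\infty(\T)$ for every $s\ge 0$. The semigroup property gives $u(\cdot,t)=P(\tfrac t2)\bigl(u(\cdot,\tfrac t2)\bigr)$. By \cref{Poisson_Cherednik}, $D_\xi$ commutes with $P(\tfrac t2)$ on $C^1(\T)$. The operator $\wt D_\xi=D_\xi+i\langle\rho,\xi\rangle$ differs from $D_\xi$ by a scalar, so it also commutes with $P(\tfrac t2)$. Moreover $\partial_t$ commutes with $P(\tfrac t2)$ acting in $x$: writing $u(\cdot,t)=P(\tfrac t2)u(\cdot,\tfrac t2)$ and differentiating is awkward, so instead use $u(\cdot,t)=P(t-s)u(\cdot,s)$ for $s<t$. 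Differentiating in $s$ at fixed $t$ does not help either. The clean route is $\partial_t u(\cdot,t)=P(\tfrac t2)\bigl(\partial_s u(\cdot,s)|_{s=t/2}\bigr)$. This holds because $\partial_t P(t)f=P(t)(-\sqrt{-L_k}\,f)$ spectrally, and $P(t)=P(\tfrac t2)P(\tfrac t2)$. Concretely, in the expansion $u(\cdot,t)=\sum_\lambda r_\lambda e^{-t\sqrt{\theta_\lambda}}\widehat f^{\,k}(\lambda)R_\lambda$, both sides equal $\sum_\lambda r_\lambda(-\sqrt{\theta_\lambda})e^{-t\sqrt{\theta_\lambda}}\widehat f^{\,k}(\lambda)R_\lambda$. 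The rapid decay \eqref{decay} justifies termwise operations.

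Hence every component $v_j$ of $\wt\nabla_k u(\cdot,t)$ satisfies $v_j=P(\tfrac t2)w_j$, where $w_j$ is the corresponding component of $\wt\nabla_k u(\cdot,\tfrac t2)$; these are smooth, complex-valued functions on $\T$.

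Next, write $P(\tfrac t2)w_j(x)=\int_\T P_k(\tfrac t2,x,y)w_j(y)\delta_k(y)\,dy$. The kernel satisfies $P_k>0$ and $\int_\T P_k(\tfrac t2,x,y)\delta_k(y)\,dy=1$, the latter from the Markov property $P(t)1=1$. Cauchy--Schwarz with respect to the probability measure $P_k(\tfrac t2,x,y)\delta_k(y)dy$ then gives $|P(\tfrac t2)w_j(x)|^2\le P(\tfrac t2)(|w_j|^2)(x)$. Summing over $j=1,\dots,n+1$ yields the claim.

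The main obstacle is the commutation step with the reflection parts and the time derivative. \cref{Poisson_Cherednik} handles the reflection parts, but the time derivative requires justifying the spectral identity for $\partial_t$. Here I would rely on the normal convergence of the Poisson series and its derivatives for $t>0$, together with the coefficient decay \eqref{decay}. Once commutation is established, the argument is just Jensen's inequality for a Markov kernel. The reflection terms cause no trouble because we never need a Leibniz rule, unlike the pure-gradient version, where $\nabla$ would not commute with $P(t)$.
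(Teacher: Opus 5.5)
Your proposal is correct and follows the paper's proof essentially verbatim. You commute $\wt D_{\xi_j}$ with $P(\tfrac t2)$ via \cref{Poisson_Cherednik}, check $\partial_t u(\cdot,t)=P(\tfrac t2)(\partial_t u)(\cdot,\tfrac t2)$ from the eigenfunction expansion, apply Cauchy--Schwarz against the probability measure $P_k(\tfrac t2,x,y)\delta_k(y)\,dy$ componentwise, and sum, exactly as the paper does (the exploratory remarks about ``awkward'' routes for $\partial_t$ can simply be dropped).
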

\begin{proof}
By Lemma \ref{Poisson_Cherednik}, we have 
$$ \widetilde D_{\xi_j} u(\cdot, t)= P(\tfrac{t}{2})\widetilde D_{\xi_j} u(\cdot, \tfrac{t}{2}) $$
and thus by H\"older's inequality,
 \begin{align*}
    |\wt D_{\xi_{j}} u(x, t)| 
                            &= \left\vert\int_{\T} P_{k}(\tfrac t 2, x, y) \wt D_{\xi_{j}}u(y, \tfrac t 2)\,\delta_{k}(y)\, dy\right\vert\\
                            &\leq \left(\int_{\T} P_{k}(\tfrac t 2, x, y)\,\delta_{k}(y)\, dy\right)^{1/2} \left(\int_{\T}|\wt D_{\xi_{j}}u(y, \tfrac t 2)|^{2}P_{k}(\tfrac t 2, x, y)\,\delta_{k}(y)\, dy\right)^{1/2}\\
                                &= \Big( P(\tfrac t 2)(|\wt D_{\xi_{j}}u(\cdot,\tfrac t 2)|^{2})(x)\Big)^{1/2}.
  \end{align*}
 Further, using the expansion
 $$ u(x,t) = \sum_{\lambda\in P} r_\lambda e^{-t\sqrt{\theta_\lambda}} R_\lambda(x)\langle f, R_\lambda\rangle_{\delta_k} ,$$
 one easily checks that
 $\, \partial_tu(x,s_1+s_2) = P(s_1) \partial_tu(x,s_2).\,$
 In particular, 
 $$\partial_tu(x,t) = P(\tfrac{t}{2})(\partial_tu)(\cdot, \tfrac{t}{2}) $$
 This leads to an estimate for $\partial_{t}u(x,t)$ analogous to that for $\widetilde D_{\xi_j}u(x,t)$ above,  which concludes the proof.
\end{proof}

Recall that the Poisson semigroup $(P(t))_{t\geq 0}$ is a symmetric diffusion semigroup, i.e. it is positive and contractive on $L^{p}(\T,\delta_{k})$ for $1 \leq p \leq \infty$ with $P(t)1=1$, strongly continous for $1\leq p<\infty,$  and each $P(t)$ is self-adjoint on $L^{2}(\T, \delta_{k})$. We may therefore employ general Littlewood-Paley theory based on symmetric diffusion semigroups from \cite[Ch.IV]{St70} to obtain the following converse of \cref{thm:gFunctionBound} for $g_{\nabla}$ and $g_{\widetilde\nabla_k}$.

\begin{theorem}\label{thm:g1bound}
  Let $1<p<\infty$ and $\,\bullet\in\{\nabla, \wt\nabla_{k}\}$. Define for $f\in L^{p}(\T,\, \delta_{k})$ the  $g_{1}$-function
  \begin{align*}
    g_{1}(f)(x)\coloneqq \left(\int_{0}^{\infty}t\, |\partial_{t}u(x, t)|^{2}\ dt\right)^{1/2}, 
  \end{align*}
  where $u(x, t)=P(t)f(x)$ as before.  Suppose further that
   $\displaystyle \int_{\T}f(x)\,\delta_{k}(x)\,dx = 0.$ Then $$\|f\|_{p}\leq B_{p}\|g_{1}(f)\|_{p}\leq B_{p}\|g_{\bullet}(f)\|_{p}$$ with some constant $B_{p}\geq 0$ depending on $R$ and $k$ only.
\end{theorem}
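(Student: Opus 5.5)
The plan is to split the statement into its two inequalities. The second one, $\|g_1(f)\|_p\le\|g_\bullet(f)\|_p$, is pointwise and immediate. Indeed $|\partial_t u|^2\le|\nabla u|^2$, and likewise $|\partial_t u|^2\le|\wt\nabla_k u|^2$, since both space-time gradients contain $\partial_t u$ as their last component. Hence $g_1(f)\le g_\nabla(f)$ and $g_1(f)\le g_{\wt\nabla_k}(f)$ pointwise on $\T$.

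The first inequality, $\|f\|_p\le B_p\|g_1(f)\|_p$ for $f$ with vanishing $\delta_k$-mean, will be deduced from Stein's general Littlewood–Paley theory for symmetric diffusion semigroups (\cite[Ch.~IV]{St70}). The paper has already checked that $(P(t))$ is such a semigroup. It is obtained by subordination from $(H(t))$, which is Markovian, contractive on all $L^p$, self-adjoint on $L^2$ and strongly continuous for $p<\infty$. Stein's theory then yields the upper bound $\|g_1(f)\|_p\le A_p\|f\|_p$ for $1<p<\infty$. It also yields the converse $\|f-E_0f\|_p\le B_p\|g_1(f)\|_p$, where $E_0$ is the projection onto the fixed space $\{f: P(t)f=f\ \forall t\}$.

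The remaining step is to identify $E_0$. By the spectral expansion, $P(t)f=\sum_\lambda e^{-t\sqrt{\theta_\lambda}}r_\lambda \widehat f^{\,k}(\lambda)R_\lambda$. By Lemma~\ref{ev}(b), $\theta_\lambda>0$ unless $\lambda=0$, so on $L^2$ the fixed space consists of the constants. Thus $E_0f=r_0\int_\T f\delta_k\,dx$, which is the limit of $P(t)f$ as $t\to\infty$. This identification extends to $L^p$ by density and continuity of $P(t)$. Under the hypothesis $\int_\T f\delta_k=0$ we therefore have $E_0f=0$, and the claim follows.

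If one prefers to avoid quoting the abstract converse, the plan would instead reproduce Stein's duality argument. The key input is the polarization identity
\[
\int_\T f\,\ol{h}\,\delta_k\,dx = 4\int_\T\int_0^\infty t\,\partial_tu_f\,\ol{\partial_tu_h}\,dt\,\delta_k\,dx,
\]
valid for $f$ of mean zero. It is checked termwise on the expansion, using $\int_0^\infty t\,\theta_\lambda e^{-2t\sqrt{\theta_\lambda}}\,dt=\tfrac14$. Applying Cauchy–Schwarz in $t$ and Hölder in $x$ then bounds the left side by $4\|g_1(f)\|_p\|g_1(h)\|_{p'}$. The bound $\|g_1(h)\|_{p'}\lesssim\|h\|_{p'}$ supplies the rest.

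The main obstacle is only that last ingredient, the $L^{p'}$-boundedness of $g_1$ for all $1<p'<\infty$, not merely $p'\le2$. I would not try to prove it by hand. Instead I would invoke Stein's theorem, which rests on the Hopf–Dunford–Schwartz maximal theorem (as in Lemma~\ref{lem:maxfuncContOp}) and Rota's martingale representation. Given that, the remaining steps are routine.
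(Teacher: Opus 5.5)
Your proposal is correct and follows the paper's route. The paper's proof cites Corollary~2 of Stein's Ch.~IV, \S 6 for symmetric diffusion semigroups, noting that the constants are the only functions in $L^2(\T,\delta_k)$ fixed by all $P(t)$; this is exactly your identification of $E_0$ via $\theta_\lambda>0$ for $\lambda\neq 0$. The paper leaves the pointwise bound $g_1\le g_\bullet$ implicit, and your alternative duality argument is the direct proof from Stein pp.~55ff that the paper also mentions.
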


\begin{proof}
  This is Corollary 2 of \cite[Ch.IV, Section 6]{St70}, because the constants are the only functions from $L^2(\T, \delta_k)$ which are left invariant by all $P(t), t>0.$  One could alternatively prove the result directly as in \cite[pp. 55ff.]{St70}.
  \end{proof}

In fact, general theory tells us $L^{p}$-boundedness of several Littlewood-Paley $g$-functions involving only time derivatives:

\begin{theorem}\label{thm:gellbounds}
  Let $1<p<\infty$ and $\ell\in\N$. We define for $f\in L^{p}(\T, \delta_{k})$ the  $g$-functions
  \begin{align*}
    g_{\ell}(f)(x)\coloneqq \left(\int_{0}^{\infty} t^{2\ell-1}|\partial_{t}^{\ell}u(x, t)|^{2}\ dt\right)^{1/2},
  \end{align*}
  again with $u(x, t)= P(t)f(x)$. Then there exist constants $A_{\ell, p}\geq 0$, depending on $R$ and $k$, such that $\|g_{\ell}(f)\|_{p}\leq A_{\ell, p}\|f\|_{p}$.
\end{theorem}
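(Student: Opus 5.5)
The plan is to reduce the statement to Stein's general Littlewood--Paley theory for symmetric diffusion semigroups, \cite[Ch.~IV]{St70}, exactly as in the proof of \cref{thm:g1bound}. As recalled before \cref{thm:g1bound}, $(P(t))_{t\geq 0}$ is a symmetric diffusion semigroup on $L^p(\T,\delta_k)$. It is obtained by subordination from the heat semigroup $(H(t))_{t\geq0}$, which is itself a symmetric diffusion semigroup. Hence all hypotheses of \cite[Ch.~IV]{St70} are satisfied. The main theorem there (Theorem~10 together with its corollaries in Section~5 of Chapter~IV, \cite{St70}) asserts that for the Poisson semigroup subordinated to any symmetric diffusion semigroup, the functions $g_\ell$ defined via $\partial_t^\ell$ are bounded on $L^p$ for $1<p<\infty$ and every $\ell\in\N$. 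The proof therefore amounts to checking that our $g_\ell$ coincide with Stein's.

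I would still sketch the mechanism to make the citation transparent. \textbf{Step 1} ($\ell=1$): write $f=\sum_\lambda a_\lambda R_\lambda$ and use $\partial_tu=-\sum\sqrt{\theta_\lambda}e^{-t\sqrt{\theta_\lambda}}a_\lambda R_\lambda$. By Plancherel,
\begin{align*}
\|g_1(f)\|_2^2=\sum_{\lambda\neq0}\theta_\lambda|a_\lambda|^2\int_0^\infty te^{-2t\sqrt{\theta_\lambda}}\,dt\cdot r_\lambda^{-1}=\tfrac14\|f-P(\infty)f\|_2^2,
\end{align*}
so $g_1$ is bounded on $L^2$. For general $p$, Stein obtains boundedness of $g_1$ through the vector-valued martingale and maximal-function argument, combined with the Hopf--Dunford--Schwartz ergodic theorem; this is the same input we already used in \cref{lem:maxfuncContOp}.

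\textbf{Step 2} ($\ell\geq2$): use the semigroup property. For $f$ smooth, $\partial_t^\ell u(x,t)=\partial_t^{\ell-1}P(s)\partial_tu(\cdot,t-s)\big|_{s=t/2}$, and analyticity gives
\begin{align*}
|t^{\ell-1}\partial_t^{\ell-1}P(t/2)h|\leq C\,\sup_{s>0}|P(s)h|.
\end{align*}
This pointwise bound follows from the Cauchy formula applied to the holomorphic extension $z\mapsto P(z)$ for $\Re z>0$. It yields $g_\ell(f)\leq C\,g_1^{*}(f)$, a maximal variant of $g_1$ in which the maximal function is applied to the Hilbert-space-valued function $t\mapsto \partial_tu(\cdot,t/2)$. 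Stein bounds this variant on $L^p$ via the vector-valued maximal theorem.

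\textbf{Main obstacle.} The only delicate point is the general $L^p$ theory in Step 1 for $p\neq2$, in particular for $p>2$. That is precisely the content of Stein's theorem, so I would not reprove it but simply verify that its hypotheses hold: contractivity on every $L^p$, $P(t)1=1$, positivity, self-adjointness, and strong continuity. All of these have been established in \cref{sec:heat} and \cref{sec:poisson}.
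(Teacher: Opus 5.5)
Your approach is the paper's: its proof consists only of invoking Corollary~1 of \cite[Ch.~IV, Section~6]{St70} for the symmetric diffusion semigroup $(P(t))_{t\geq 0}$, so verifying Stein's hypotheses is all that is needed (the paper cites Section~6 rather than Section~5). Your mechanism sketch is optional, but in Step~2 the Cauchy formula bounds $t^{\ell-1}\partial_t^{\ell-1}P(t/2)h$ by a supremum over complex times near $t/2$, not by $\sup_{s>0}|P(s)h|$ over real times, so that pointwise bound as written would need another justification (e.g.\ subordination and the ergodic maximal function of \cref{lem:maxfuncContOp}).
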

\begin{proof}
  This is Corollary 1 of \cite[Ch.IV, Section 6]{St70}. 
\end{proof}

\section{Riesz transforms and imaginary powers}\label{sec:riesz}

In this final section, we introduce natural Riesz transforms and imaginary powers associated with the Heckman-Opdam Laplacian $L_k$ on $\mathbb T.$  We apply our results for $g$-functions to prove their $L^p$-boundedness for $1<p<\infty,$ and we finally finish the proof of   \cref{thm:gFunctionBound}.

We start with the definition of Riesz transforms on the space of trigonometric polynomials $\mathcal T = \text{span}_{\mathbb C}\{e^{i\lambda}, \, \lambda \in P\},$ c.f. \eqref{trigpol}. Note that $\mathcal T$ is dense in $L^p(\T, \delta_k)$ for $1\leq p <\infty.$ 
Note also that in view of Lemma \ref{ev}, the operator $-L_k$ is positive on $L^2(\T, \delta_k)$ with $L_k1=0.$ For an exponent $s\in \mathbb C$, the fractional power $(-L_k)^s$ is defined on $\mathcal T$  by
$$ 
  (-L_{k})^sf = \sum_{\lambda\in P\setminus\{0\}} a_{\lambda} \theta_{\lambda}^s\,R_{\lambda}\in\mathcal T \quad \text{ for } f=\sum_{\lambda\in P} a_{\lambda}R_{\lambda}\in \mathcal T.
  $$

\begin{definition}
  Let $(\xi_{j})_{1\leq j\leq n}$ be an orthonormal basis of $\a$. We define the \emph{$j$-th Riesz transform} of $f\in \mathcal T$ with respect to $(\xi_{j})_{1\leq j\leq n}$ by
  \begin{align*}
    \mathcal R_{j} f \coloneqq \wt D_{\xi_{j}} (-L_{k})^{-1/2}f = (D_{\xi_{j}}+i\langle \rho, \xi_{j}\rangle)(-L_{k})^{-1/2}f.
  \end{align*}  
\end{definition}
For $f=\sum_{\lambda\in P} a_{\lambda}R_{\lambda} \in \mathcal T$ we see from \eqref{Cherednik_EF} that
\begin{align*}
  \mathcal R_{j}f = \sum_{\lambda\in P\setminus\{0\}} \frac{a_{\lambda}}{\sqrt{\theta_{\lambda}}} \,i\langle \wt\lambda+\rho, \xi_{j}\rangle R_{\lambda}.
\end{align*}
Thus, $\mathcal R_j$ is a multiplier operator satisfying
$$(\mathcal R_j f)^{\wedge k} (\lambda) = \frac{i\langle  \widetilde \lambda + \rho, \xi_j\rangle}{\sqrt{\theta_\lambda}} \widehat f^{\,k}(\lambda), \quad \lambda \in P \setminus \{0\}, \quad (\mathcal R_j f)^{\wedge k} (0) = 0.$$

\begin{theorem} \label{Riesz_cont}
 For $1<p<\infty$, the Riesz transforms $\mathcal R_{j}$ extend to bounded linear operators on $L^{p}(\T,\delta_{k}). $
\end{theorem}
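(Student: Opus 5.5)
We follow Stein's classical argument for compact Lie groups \cite[Ch. II]{St70}: a duality argument in which the spatial $g$-function $g_{\wt D_{\xi}}$ (bounded for $1<p\le 2$) is paired with the time-derivative $g$-functions $g_1$, $g_2$ (bounded for all $1<p<\infty$ by \cref{thm:gellbounds}). By density it suffices to treat $f\in\mathcal T$ with $\widehat f^{\,k}(0)=0$, and we set $F:=\mathcal R_j f\in\mathcal T$. The Poisson semigroup acts on $\mathcal T$ by the multipliers $e^{-t\sqrt{\theta_\lambda}}$, and $\widetilde D_{\xi_j}$ commutes with $P(t)$ by \cref{Poisson_Cherednik}. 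Writing $u_f(\cdot,t)=P(t)f$, we therefore obtain
\begin{align*}
  \partial_t u_F(\cdot,t) = -\wt D_{\xi_j} u_f(\cdot,t).
\end{align*}
To see this, note that both sides have $\lambda$-th coefficient $-e^{-t\sqrt{\theta_\lambda}}\, i\langle\wt\lambda+\rho,\xi_j\rangle\,\widehat f^{\,k}(\lambda)$. It follows that $g_1(F)=g_{\wt D_{\xi_j}}(f)$ pointwise.

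\textbf{Step 1: the case $1<p\le 2$.} Since $F$ has vanishing mean, \cref{thm:g1bound} and \cref{thm:gFunctionBound}(1) combined with \cref{prop:gineq}(1) yield
\begin{align*}
  \|F\|_p\le B_p\|g_1(F)\|_p=B_p\|g_{\wt D_{\xi_j}}(f)\|_p\le B_p\bigl(\|g_{\partial_{\xi_j}}(f)\|_p+c\|g_0(f)\|_p\bigr)\le C_p\|f\|_p .
\end{align*}
This proves the bound for $1<p\le 2$.

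\textbf{Step 2: the case $2<p<\infty$.} Here we use duality. A direct computation with the multiplier shows that the adjoint of $\mathcal R_j$ on $L^2(\T,\delta_k)$ is $\mathcal R_j^*=-\widetilde D_{\xi_j}^{*}$-type operator with multiplier $-i\langle\wt\lambda+\rho,\xi_j\rangle/\sqrt{\theta_\lambda}$ (using $\wt D_\xi^*=-D_\xi+\ldots$; one checks $D_\xi^*=-D_\xi$ by \cref{antisymm}, so $\wt D_\xi^* = -D_\xi - i\langle\rho,\xi\rangle$). This is not again a Riesz transform, since the shift by $\rho$ now has the opposite sign. It is therefore cleaner to run the argument directly in polarized form. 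For $f,h\in\mathcal T$ with mean zero, Stein's identity (from spectral calculus) gives
\begin{align*}
  \langle F,h\rangle_{\delta_k}=4\int_0^\infty\!\!\int_\T t\,\partial_t u_F\,\ol{\partial_t u_h}\,\delta_k\,dx\,dt=-4\int_0^\infty\!\!\int_\T t\,\wt D_{\xi_j}u_f(x,t)\,\ol{\partial_t u_h(x,t)}\,\delta_k\,dx\,dt .
\end{align*}
Applying Cauchy--Schwarz in $t$ and then Hölder in $x$, we get
\begin{align*}
  |\langle F,h\rangle|\le 4\,\|g_{\wt D_{\xi_j}}(f)\|_p\,\|g_1(h)\|_{p'} .
\end{align*}
Since \cref{thm:gellbounds} bounds $g_1$ on all $L^{p'}$, this reduces everything to bounding $g_{\wt D_{\xi_j}}$ on $L^p$ for $p>2$, which is exactly what we do not yet have. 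So we reverse roles and instead move the Cherednik operator onto $h$. Using the anti-symmetry of $D_\xi$ (\cref{antisymm}) and that $D_\xi$ commutes with $P(t)$, we write
\begin{align*}
  \langle F,h\rangle=\langle f,\mathcal R_j^* h\rangle .
\end{align*}
Here $\mathcal R_j^*$ has multiplier $-i\langle\wt\lambda-\rho,\xi_j\rangle/\sqrt{\theta_\lambda}$ in the sense of $-(D_{\xi_j}-i\langle\rho,\xi_j\rangle)(-L_k)^{-1/2}$. It then suffices to prove $L^{p'}$-boundedness of $\mathcal R_j^*$ for $1<p'\le 2$.

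\textbf{Step 3: bounding $\mathcal R_j^*$ and the main obstacle.} We repeat Step 1 for $\mathcal R_j^*$, which needs the $g$-function of $D_{\xi}-i\langle\rho,\xi\rangle=\partial_\xi+D^{\rm ref}_\xi-2i\langle\rho,\xi\rangle$. Compared with $g_{\wt D_\xi}$, this contains an extra zero-order term $2|\langle\rho,\xi\rangle|\,(\int_0^\infty t|u_h|^2dt)^{1/2}$. This term is \emph{not} controlled, since $u_h$ does not decay fast enough near $t=0$ to be square-integrable with weight $t$ uniformly. This is the main obstacle. To handle it, we write $D_\xi-i\langle\rho,\xi\rangle=\wt D_\xi-2i\langle\rho,\xi\rangle$, so that
\begin{align*}
  \mathcal R_j^*=-\mathcal R_j+2i\langle\rho,\xi_j\rangle(-L_k)^{-1/2}
\end{align*}
on mean-zero functions. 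It then remains to show that $(-L_k)^{-1/2}$ is bounded on every $L^q$. This follows from the subordination formula
\begin{align*}
  (-L_k)^{-1/2}h=\int_0^\infty P(t)h\,dt
\end{align*}
together with the spectral gap $\theta_\lambda\ge\theta_{\min}>0$ from \cref{ev}. The gap gives exponential decay $\|P(t)h\|_2\le e^{-t\sqrt{\theta_{\min}}}\|h\|_2$ for mean-zero $h$. By Riesz--Thorin interpolation with the $L^1$/$L^\infty$ contractivity of $P(t)$, we obtain exponential decay in every $L^q$ with $1<q<\infty$, and hence $\|P(t)h\|_q\le Ce^{-\epsilon_q t}\|h\|_q$. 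Combining this with Step 1 for $\mathcal R_j$ on $L^{p'}$, we conclude that $\mathcal R_j^*$ is bounded on $L^{p'}$, and by duality $\mathcal R_j$ is bounded on $L^p$ for $p>2$. Finally, the mean component is harmless: $\mathcal R_j$ annihilates constants and preserves mean zero, the latter since $(\mathcal R_jf)^{\wedge k}(0)=0$.
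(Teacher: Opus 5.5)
Your Step 1 is the paper's argument. The paper dominates $g_1(\mathcal R_j f)$ by $g_{\wt\nabla_k}(f)$ instead of $g_{\wt D_{\xi_j}}(f)$, which makes no difference. Step 2, however, contains a sign error that creates a nonexistent obstacle and, as written, breaks the duality step.

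You correctly note that $\wt D_\xi^{*}=-D_\xi-i\langle\rho,\xi\rangle$ on $\mathcal T$. But this is exactly $-\wt D_\xi$, so the $\rho$-shift does \emph{not} change sign. Equivalently, the multiplier $i\langle\wt\lambda+\rho,\xi_j\rangle/\sqrt{\theta_\lambda}$ of $\mathcal R_j$ is purely imaginary and the $R_\lambda$ are orthogonal. The adjoint multiplier is therefore its complex conjugate, i.e.\ its negative, and $\mathcal R_j^{*}=-\mathcal R_j$ on $\mathcal T$. The operator you use instead is $-(D_{\xi_j}-i\langle\rho,\xi_j\rangle)(-L_k)^{-1/2}=-\mathcal R_j+2i\langle\rho,\xi_j\rangle(-L_k)^{-1/2}$. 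It differs from the true adjoint by $2i\langle\rho,\xi_j\rangle(-L_k)^{-1/2}$, which is nonzero whenever $\langle\rho,\xi_j\rangle\neq 0$. So the identity $\langle F,h\rangle_{\delta_k}=\langle f,\mathcal R_j^{*}h\rangle_{\delta_k}$ with your $\mathcal R_j^{*}$ is false. Proving that this operator is bounded on $L^{p'}$ therefore gives no duality conclusion about $\mathcal R_j$ on $L^p$.

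The repair is immediate and makes Step 3 unnecessary. Using $\mathcal R_j^{*}=-\mathcal R_j$, take $f,h\in\mathcal T$, $2\le p<\infty$ and $1/p+1/q=1$. Step 1 applied on $L^q$ with $1<q\le 2$ gives
\begin{align*}
|\langle\mathcal R_j f,h\rangle_{\delta_k}|=|\langle f,\mathcal R_j h\rangle_{\delta_k}|\le\|f\|_p\,\|\mathcal R_j h\|_q\le C_q\|f\|_p\,\|h\|_q ,
\end{align*}
and density of $\mathcal T$ in $L^q(\T,\delta_k)$ finishes the proof. This is precisely the paper's argument.

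Your auxiliary facts are correct but not needed:
\begin{itemize}
\item the polarized identity;
\item the subordination formula for $(-L_k)^{-1/2}$ on mean-zero functions;
\item exponential $L^q$-decay of $P(t)$ from the spectral gap. Here Riesz--Thorin should be applied to $P(t)$ composed with the projection onto mean-zero functions, not to $P(t)$ itself.
\end{itemize}
Your diagnosis of the obstacle is also off. Near $t=0$ the zero-order term is harmless, since $\int_0^1 t\,|u_h(x,t)|^2\,dt\le \tfrac12 (P^{*}h(x))^2$, and $P^{*}$ is bounded on $L^q$ by the paper's maximal function lemma.
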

\begin{proof}
  We proceed as in \cite{St70}.
  Let $f=\sum_{\lambda\in P}a_{\lambda}R_{\lambda}\in\mathcal T$ and $f_{j}=\mathcal R_{j}f$. Further, let $u(\,.\, , t)=P(t)f$ and $u_{j}(\,.\,, t)=P(t)f_{j}$. Then
  \begin{align*}
    \partial_{t} u(\,.\,,t)  &= \partial_{t} \Bigl(\sum_{\lambda\in P} e^{-t\sqrt{\theta_{\lambda}}} a_{\lambda}R_{\lambda} \Bigr)\\
                   &=\sum_{\lambda\in P\setminus\{0\}} -\sqrt{\theta_{\lambda}}e^{-t\sqrt{\theta_{\lambda}}}a_{\lambda}R_{\lambda} \\
                   &= -P(t)((-L_{k})^{1/2}f).
  \end{align*}
  Applying $\mathcal R_{j}$ to both sides and recalling Lemma \ref{Poisson_Cherednik},     we obtain
  \begin{align}\label{eqn:rieszCherednikVsTime}
    \partial_{t} u_{j} =  \mathcal R_j \partial_tu = -P(t) \wt D_{\xi_j} f = -\wt D_{\xi_{j}}u.
  \end{align}
  Thus we conclude
  \begin{align*}
    g_{1}(\mathcal R_{j}f)(x)= \left(\int_{0}^{\infty} t\, |\partial_{t}u_{j}(x, t)|^{2}\ dt\right)^{1/2} \leq K\left(\int_{0}^{\infty} t\,|\wt \nabla_{k} u(x, t)|^{2}\ dt\right)^{1/2} =K g_{\wt\nabla_{k}}(f)(x).
  \end{align*}
 In view of \cref{thm:g1bound} and \cref{thm:gFunctionBound}(1), this gives for $1<p\leq 2$
 $$\|\mathcal R_{j}f\|_{p}\leq B_{p}\|g_{1}(\mathcal R_{j}f)\|_{p}\leq B^{\prime}_{p}\|f\|_{p}\,.$$ 
 For the case $2\leq p <\infty$ we argue by duality. Let $1<q\leq 2$ denote the conjugate exponent such that $1/p+1/q =1$. Let $f, g\in\mathcal T$. Then
 \begin{align*}
   \langle \mathcal R_{j} f, g\rangle_{\delta_{k}} = -\langle f,\mathcal R_{j} g\rangle_{\delta_{k}}
 \end{align*}
 and thus
 \begin{align*}
   |\langle \mathcal R_{j}f, g\rangle_{\delta_{k}}| \leq \|f\|_{p}\|\mathcal R_{j}g\|_{q}\leq B_{q}'\|f\|_{p}\|g\|_{q}. 
 \end{align*}
 Since $\mathcal T\subseteq L^{q}(\T, \delta_{k})$ is dense, we obtain $\|\mathcal R_{j} f\|_{p}\leq B_{q}' \|f\|_{p}$ for all $f\in\mathcal T$.
 \end{proof}

This result now allows us to finally prove part (2) of \cref{thm:gFunctionBound}. Also compare \cite{LZL17}, Lemma 2.4.
\begin{corollary}
  Let $1<p<\infty$ and $f\in L^{p}(\T, \delta_{k})$. Then
  \begin{enumerate}
    \item $g_{\wt D_{\xi_{j}}}(f) = g_{1}(\mathcal R_{j}f)$.
    \item $g_{\wt\nabla_{k}}(f)^{2} = g_{1}(f)^{2}+\sum_{j=1}^{n} g_{\wt D_{\xi_{j}}}(f)^{2}$.
  \end{enumerate}
  As a consequence, we obtain
   part (2) of \cref{thm:gFunctionBound}.
\end{corollary}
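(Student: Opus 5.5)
The plan is to prove both identities first for $f\in\mathcal T$ and then extend them to $L^p$ by density.

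For (1), write $u(\cdot,t)=P(t)f$ and $u_j(\cdot,t)=P(t)\mathcal R_jf$. Identity \eqref{eqn:rieszCherednikVsTime} in the proof of \cref{Riesz_cont} gives the pointwise relation
\[
\partial_t u_j(x,t)=-\wt D_{\xi_j}u(x,t).
\]
Taking $\|\cdot\|_{\H}$ with $\H=L^2(\opint,t\,dt)$ in $t$ for fixed $x$, this gives $g_1(\mathcal R_jf)(x)=g_{\wt D_{\xi_j}}(f)(x)$ at once.

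For (2), expand the definition: $|\wt\nabla_k u|^2=|\partial_tu|^2+\sum_j|\wt D_{\xi_j}u|^2$. Integrating against $t\,dt$ gives $g_{\wt\nabla_k}(f)^2=g_1(f)^2+\sum_j g_{\wt D_{\xi_j}}(f)^2$. This part is purely definitional and holds for every $f\in L^p$, since $u$ is smooth for $t>0$.

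The main obstacle is the extension of (1) to general $f\in L^p$. Here $\mathcal R_j f$ is defined only as the bounded extension from \cref{Riesz_cont}. Take $f_m\in\mathcal T$ with $f_m\to f$ in $L^p$; then $\mathcal R_jf_m\to\mathcal R_jf$ in $L^p$. The Poisson kernel and its $x$- and $t$-derivatives are smooth and bounded on $[\delta,\infty[\times\T\times\T$. Hence for fixed $t>0$ we have $\partial_tP(t)\mathcal R_jf_m\to\partial_tP(t)\mathcal R_jf$ and $\wt D_{\xi_j}P(t)f_m\to\wt D_{\xi_j}P(t)f$ pointwise. For the reflection part, use the representation of $d_\alpha$ from \cref{diff_properties}, applied to the kernel in $x$. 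This shows that the identity $\partial_t u_j=-\wt D_{\xi_j}u$ persists for general $f$, and (1) then holds pointwise everywhere.

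The consequence, part (2) of \cref{thm:gFunctionBound}, follows by combining (1) with \cref{thm:gellbounds} for $\ell=1$ and \cref{Riesz_cont}:
\[
\|g_{\wt D_{\xi_j}}(f)\|_p=\|g_1(\mathcal R_jf)\|_p\le A_{1,p}\|\mathcal R_jf\|_p\le C_p\|f\|_p .
\]
For a general $\xi$ with $|\xi|\le1$, choose the orthonormal basis so that $\xi=|\xi|\xi_1$ (the case $\xi=0$ is trivial). Then $g_{\wt D_\xi}=|\xi|\,g_{\wt D_{\xi_1}}$, so the bound transfers.

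For $g_{\wt\nabla_k}$, identity (2) gives $g_{\wt\nabla_k}(f)\le g_1(f)+\sum_j g_{\wt D_{\xi_j}}(f)$. Each term on the right is $L^p$-bounded, by \cref{thm:gellbounds} and the previous step respectively.
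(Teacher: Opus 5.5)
Your proposal is correct and follows essentially the paper's route: the paper calls both identities immediate from \eqref{eqn:rieszCherednikVsTime}, and part (2) of \cref{thm:gFunctionBound} then follows from \cref{Riesz_cont} together with the $L^p$-bound for $g_1$ in \cref{thm:gellbounds}. The paper leaves two details implicit that you supply: the density argument carrying $\partial_t u_j=-\wt D_{\xi_j}u$ from $\mathcal T$ to all $f\in L^p$, and the reduction from a general $\xi$ with $|\xi|\leq 1$ to a basis vector.
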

\begin{proof}
  This is  immediate from \cref{eqn:rieszCherednikVsTime}.
\end{proof}

Our Riesz transforms may be used to prove inequalities involving Cherednik operators: 

\begin{corollary}[Riesz inequalities]
  Let $1<p<\infty$ and $f \in C^{2}(\T)$. Then
  \begin{align*}
    \|\wt D_{\xi_{i}}\wt D_{\xi_{j}} f\|_{p}\leq A_{p}\|L_{k}f\|_{p} 
  \end{align*}
  with a constant $A_{p}\geq 0$ depending on $R$ and $k$ only.
\end{corollary}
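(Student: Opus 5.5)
The plan is to write $\wt D_{\xi_i}\wt D_{\xi_j} f$ as a composition of two Riesz transforms applied to $L_k f$, and then apply \cref{Riesz_cont} twice.

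First take $f\in\mathcal T$ with expansion $f=\sum_{\lambda}a_\lambda R_\lambda$. By \eqref{Cherednik_EF}, $\wt D_\xi R_\lambda = i\langle\wt\lambda+\rho,\xi\rangle R_\lambda$. For $\lambda=0$ we have $\wt 0 = -\rho$, because $\epsilon(0)=-1$. Hence $\wt D_\xi$ annihilates the constants, i.e. $\wt D_\xi R_0=0$. Since $-L_k R_\lambda=\theta_\lambda R_\lambda$ with $\theta_\lambda>0$ for $\lambda\neq 0$ by \cref{ev}, and all operators involved are diagonal in the basis $(R_\lambda)$, we get the multiplier identity
\begin{align*}
\wt D_{\xi_i}\wt D_{\xi_j} f = \sum_{\lambda\neq 0} \frac{i\langle\wt\lambda+\rho,\xi_i\rangle\, i\langle\wt\lambda+\rho,\xi_j\rangle}{\theta_\lambda}\,\theta_\lambda a_\lambda R_\lambda = -\mathcal R_i\mathcal R_j L_k f.
\end{align*}
Here $L_kf = -\sum_\lambda \theta_\lambda a_\lambda R_\lambda$ has no constant term, and both $\mathcal R_i$ and $\mathcal R_j$ map $\mathcal T$ into $\mathcal T$. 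Applying \cref{Riesz_cont} twice then gives $\|\wt D_{\xi_i}\wt D_{\xi_j}f\|_p\le C_p^2\|L_kf\|_p$ for $f\in\mathcal T$.

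The main obstacle is passing from $\mathcal T$ to $C^2(\T)$. We need $f_m\in\mathcal T$ with $f_m\to f$ in $C^2(\T)$. Then $L_kf_m\to L_kf$ and $\wt D_{\xi_i}\wt D_{\xi_j}f_m\to \wt D_{\xi_i}\wt D_{\xi_j}f$ uniformly, by \cref{diff_properties} applied twice ($C^2\to C^1\to C^0$) together with continuity of $L_k=\sum D_{\xi_j}^2+|\rho|^2$ from $C^2$ to $C$. Uniform convergence implies $L^p(\delta_k)$ convergence, so the inequality would pass to the limit.

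To produce such $f_m$, I would use classical Fej\'er-type (or de la Vall\'ee Poussin) means with respect to the ordinary Fourier basis $e^{i\lambda}$, $\lambda\in P$, on $\T=\a/2\pi\crtlatt$. Since the character group of $\T$ is exactly $\{e^{i\lambda}:\lambda\in P\}$, these means lie in $\mathcal T$. They converge to $f$ in $C^2$ because convolution with a positive approximate identity commutes with $\partial^\alpha$. This density of $\mathcal T$ in $C^2(\T)$ is the step I would check most carefully.

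An alternative route avoids $C^2$-approximation. One could instead approximate by $P(t)f$ and use \cref{Poisson_Cherednik}, which gives $L_kP(t)f=P(t)L_kf$. However, $P(t)f$ is not in $\mathcal T$, so the density argument via Fej\'er means is the cleaner choice.
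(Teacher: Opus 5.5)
Your proposal is correct and follows essentially the same route as the paper: on $\mathcal T$ you write $\wt D_{\xi_i}\wt D_{\xi_j}f=-\mathcal R_i\mathcal R_j L_kf$ and apply \cref{Riesz_cont}, then pass to $C^2(\T)$ using density of $\mathcal T$ and the continuity of the Cherednik operators from \cref{diff_properties}. You are slightly more explicit than the paper in two places: you note that $\wt D_\xi$ annihilates constants (since $\wt 0=-\rho$), and you justify the density of $\mathcal T$ in $C^2(\T)$ by Fej\'er means, which the paper simply asserts.
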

\begin{proof}
  First consider $f\in \mathcal T$ and let $g=-L_{k}f=(-L_{k})^{1/2}(-L_{k})^{1/2}f$. Then by Theorem \ref{Riesz_cont},
  \begin{align*}
    \|\mathcal R_{i}\mathcal R_{j}g\|_{p}\leq A_{p} \|L_{k}f\|_{p}\,. 
  \end{align*}
  As the $D_{\xi_{j}}$ commute with $(-L_{k})^{1/2}$ on $\mathcal T$, this implies the stated estimate for $f\in \mathcal T.$  Further, the Cherednik operators $D_{\xi}\colon C^{m}(\T)\to C^{m-1}(\T)$ are continuous and thus $L_{k}=\sum_{j=1}^{n}D_{\xi_{j}}^{2}+|\rho|^{2}$ is $C^{2}(\T)\to C^{0}(\T)$ continuous.
  Since $\mathcal T \subseteq C^{2}(\T)$ is dense and    $\|\cdot\|_{\infty}$-convergence implies $\|\cdot\|_{p}$-convergence, this concludes the proof.
\end{proof}

For $\gamma\in \mathbb R,$ we now consider the imaginary powers $(-L_{k})^{i\gamma}$ on $\mathcal T$. 
 
\begin{theorem}\label{thm:rieszPotential}
 For $1<p<\infty, $  the operators $(-L_{k})^{i\gamma},\,\gamma\in\R$ extend to bounded linear operators $L^{p}(\T,\, \delta_{k})\to L^{p}(\T,\delta_{k}).$ 
\end{theorem}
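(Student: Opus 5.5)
The plan is to follow Stein's approach for imaginary powers via $g$-functions, \cite[Ch.~II/IV]{St70}. We represent $(-L_k)^{i\gamma}$ as a multiplier of Laplace-transform type in the variable $\sqrt{\theta_\lambda}$. Then we compare $g$-functions of $T_\gamma f := (-L_k)^{i\gamma}f$ with those of $f$, and conclude using \cref{thm:g1bound} and \cref{thm:gellbounds}. Throughout we work with $f\in\mathcal T$ and, after subtracting the constant term (which $(-L_k)^{i\gamma}$ annihilates), may assume $\int_\T f\,\delta_k\,dx=0$. By density of $\mathcal T$ in $L^p(\T,\delta_k)$ it suffices to prove $\|T_\gamma f\|_p\le C_{p,\gamma}\|f\|_p$ for such $f$.

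The first step is a subordination formula. For $\theta>0$ we have
\[
\theta^{i\gamma}=(\sqrt{\theta})^{2i\gamma}=\frac{1}{\Gamma(2-2i\gamma)}\int_0^\infty s^{1-2i\gamma}\,\sqrt{\theta}^{\,2}e^{-s\sqrt\theta}\,ds ,
\]
using $\int_0^\infty s^{1-2i\gamma}e^{-s a}\,ds=\Gamma(2-2i\gamma)a^{-2+2i\gamma}$ with $a=\sqrt\theta$. Writing $u(\cdot,t)=P(t)f$, so that $\partial_t^2u(\cdot,s)$ has coefficients $\theta_\lambda e^{-s\sqrt{\theta_\lambda}}a_\lambda$, this gives
\[
T_\gamma f=\frac{1}{\Gamma(2-2i\gamma)}\int_0^\infty s^{1-2i\gamma}\,\partial_s^2u(\cdot,s)\,ds .
\]
The integral converges by \cref{lem:smoothAtTimeBoundary}, since $f\in\mathcal T$ is a finite sum.

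The second step is the key pointwise estimate. Set $U(\cdot,t)=P(t)T_\gamma f$. Using the semigroup property, for $t>0$ we get
\[
\partial_tU(\cdot,t)=\frac{1}{\Gamma(2-2i\gamma)}\int_0^\infty s^{1-2i\gamma}\,\partial_r^3u(\cdot,r)\big|_{r=t+s}\,ds .
\]
Taking absolute values and applying Cauchy--Schwarz with weights, as in Stein's argument, yields
\[
|\partial_tU(x,t)|^2\le C_\gamma\, t^{-1}\int_t^\infty r^{4}\,|\partial_r^3u(x,r)|^2\,dr\cdot(\text{something integrable}).
\]
A cleaner way to organize this is
\[
|\partial_tU(x,t)|\le C\int_t^\infty r\,|\partial_r^3u(x,r)|\,dr
\le C\Big(\int_t^\infty r^{-2}\,dr\Big)^{1/2}\Big(\int_t^\infty r^{4}|\partial_r^3u(x,r)|^2\,dr\Big)^{1/2},
\]
where we used $s\le r$. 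Multiplying by $t$, integrating in $t$, and applying Fubini ($\int_0^r dt=r$) gives
\[
g_1(T_\gamma f)(x)^2\le C\int_0^\infty r^{5}|\partial_r^3u(x,r)|^2\,dr=C\,g_3(f)(x)^2 .
\]
The constant grows like $1/|\Gamma(2-2i\gamma)|$, which is harmless for fixed $\gamma$.

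The conclusion then follows from \cref{thm:g1bound} applied to $T_\gamma f$, which has mean zero, together with \cref{thm:gellbounds} for $\ell=3$:
\[
\|T_\gamma f\|_p\le B_p\|g_1(T_\gamma f)\|_p\le B_pC\|g_3(f)\|_p\le C'\|f\|_p
\]
for all $1<p<\infty$. No duality is needed. The main obstacle is the bookkeeping in the second step. The weight powers must match exactly, so that the Fubini step lands on $g_\ell$ with the correct $t^{2\ell-1}$. This is why we choose $\ell=3$ and the exponent $s^{1-2i\gamma}$ so that the integral converges at $s=0$. We also have to justify differentiating under the integral sign, which is straightforward for $f\in\mathcal T$.
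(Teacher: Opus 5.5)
Your proof is correct and is essentially the paper's argument. The paper deduces the statement from Stein's theorem for multipliers of Laplace-transform type. There one writes $\mu^{2i\gamma}=\frac{\mu}{\Gamma(1-2i\gamma)}\int_0^\infty e^{-\mu s}s^{-2i\gamma}\,ds$ with a bounded density and obtains $g_1(T_\gamma f)\le C\,g_2(f)$. Your representation (density $s^{1-2i\gamma}$ against $\mu^2e^{-s\mu}$) is that one after a single integration by parts in $s$, so you land on $g_3$ instead of $g_2$. Both then conclude exactly as you do, via $\|F\|_p\le B_p\|g_1(F)\|_p$ for mean-zero $F$ and the $L^p$-bounds for $g_\ell$.

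Your first display containing ``something integrable'' should simply be deleted; the subsequent Cauchy--Schwarz/Fubini computation is complete on its own.
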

\begin{proof}
  This is a consequence of  \cref{thm:rieszPotentialGeneralized} below, upon noting that
  \begin{align*}
    \theta^{i\gamma} = K\theta \int_{0}^{\infty} e^{-\theta s}s^{-i\gamma}\ ds\qquad (\theta >0)
  \end{align*}
  is a function of Laplace-transform type in the sense of the following definition.
\end{proof}

\begin{definition} (\cite[Ch.II]{St70}) 
  A function $m\colon \opint \to \C$ is said to be of \emph{Laplace-transform type}, if there exists a function $a\in L^{\infty}(\opint)$ such that
  \begin{align*}
    m(\theta) = \theta\int_{0}^{\infty}e^{-\theta s}a(s)\ ds. 
  \end{align*}
\end{definition}

\begin{theorem}\label{thm:rieszPotentialGeneralized}
  Let $m\colon\opint\to\C$ be of Laplace-transform type. Define the operator $I_m$ on $\mathcal T$ by 
  \begin{align*}
    I_m(f) \coloneqq \sum_{\lambda\in P\setminus\{0\}} m(\theta_{\lambda}^{1/2}) a_{\lambda}R_{\lambda}\quad\text{for }\quad f=\sum_{\lambda\in P} a_{\lambda}R_{\lambda}\in \mathcal T. 
  \end{align*}
  Then for $1<p<\infty$, there is a constant $A_p(m)\geq 0$ such that $\|I_{m}(f)\|_{p} \leq A_{p}(m) \|f\|_{p}$ for all $f\in \mathcal T.$ 
   
\end{theorem}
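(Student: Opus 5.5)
The plan is to follow Stein's classical argument (\cite[Ch.~II]{St70}) for multipliers of Laplace-transform type. It combines the lower bound of \cref{thm:g1bound} with a pointwise control of $g_1(I_m f)$ by the $g$-function $g_2$ of \cref{thm:gellbounds}.

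Fix $f=\sum_\lambda a_\lambda R_\lambda\in\mathcal T$ and put $F:=I_m f$, $u(\cdot,t)=P(t)f$ and $U(\cdot,t)=P(t)F$. Since $F\in\mathcal T$ has $\widehat F^{\,k}(0)=0$, we have $\int_\T F\delta_k\,dx=0$. By \cref{thm:g1bound},
\[
\|F\|_p\le B_p\|g_1(F)\|_p .
\]
It therefore suffices to prove a pointwise bound $g_1(F)(x)\le C\,\|a\|_\infty\, g_2(f)(x)$, and then to apply \cref{thm:gellbounds} with $\ell=2$.

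For the pointwise bound, work on the spectral side. Write $\sqrt{\theta}=\sqrt{\theta_\lambda}$ for $\lambda\ne 0$. Then
\[
\partial_t U(\cdot,t)=-\sum_{\lambda\neq 0}\sqrt{\theta}\,m(\sqrt{\theta})\,e^{-t\sqrt\theta}a_\lambda R_\lambda .
\]
Insert $m(\sqrt\theta)=\sqrt\theta\int_0^\infty e^{-s\sqrt\theta}a(s)\,ds$ and use $\theta\, e^{-(t+s)\sqrt\theta}=\partial_t^2u(\cdot,t+s)$ on the nonconstant modes. The sums are finite, so exchanging sum and integral is harmless, and we get
\[
\partial_t U(x,t)=-\int_0^\infty a(s)\,\partial_t^2u(x,t+s)\,ds=-\int_t^\infty a(s-t)\,\partial_s^2u(x,s)\,ds .
\]
Cauchy–Schwarz then gives
\[
|\partial_tU(x,t)|^2\le \|a\|_\infty^2\Bigl(\int_t^\infty s^{-3/2}\,ds\Bigr)\Bigl(\int_t^\infty s^{3/2}|\partial_s^2u(x,s)|^2ds\Bigr).
\]
The first factor equals $2t^{-1/2}$. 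Integrating against $t\,dt$ and using Fubini gives
\[
\int_0^\infty t|\partial_tU|^2dt\le 2\|a\|_\infty^2\int_0^\infty s^{3/2}|\partial_s^2u(x,s)|^2\Bigl(\int_0^s t^{1/2}dt\Bigr)ds=\tfrac43\|a\|_\infty^2\int_0^\infty s^3|\partial_s^2u(x,s)|^2ds ,
\]
which is $\tfrac43\|a\|_\infty^2\,g_2(f)(x)^2$. This is exactly the desired bound.

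Combining the two steps yields $\|I_mf\|_p\le B_p\,C\|a\|_\infty A_{2,p}\|f\|_p$ for all $1<p<\infty$, with no duality step required. The delicate point is the exponent bookkeeping in the Cauchy–Schwarz step. The weights have to be split as $s^{-3/2}\cdot s^{3/2}$, so that the $t$-integral reproduces precisely the weight $s^{2\ell-1}=s^3$ of $g_2$. The only other thing to check is that the constant mode $\lambda=0$ drops out consistently on both sides, which it does because $\theta_0=0$ and $I_m$ annihilates constants.
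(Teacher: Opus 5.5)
Your proposal is correct and is essentially the paper's proof. The paper simply defers to Stein's argument (\cite[pp.~59ff]{St70}), which is exactly the chain $\|I_mf\|_p\le B_p\|g_1(I_mf)\|_p\le C\|a\|_\infty\|g_2(f)\|_p\le C'\|f\|_p$ that you carry out using \cref{thm:g1bound} and \cref{thm:gellbounds}, and your pointwise estimate $g_1(I_mf)\le \tfrac{2}{\sqrt3}\,\|a\|_\infty\, g_2(f)$, including the choice of weights in the Cauchy--Schwarz step, checks out.
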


With  $m(\theta)=\theta^{2i\gamma}$ we obtain \cref{thm:rieszPotential}.
\begin{proof}
  The proof is identical to that in \cite{St70}, pp. 59ff. In fact, it only relies on the $L^{p}$-bounds for the $g_{\ell}$ functions established in \cref{thm:gellbounds}. Alternatively, this theorem is also a direct consequence of the general theory on symmetric diffusion semigroups, namely Corollary 3 of \cite[Ch. IV, Sec. 6]{St70}.
\end{proof}

\section{The $W$-invariant (symmetric) case}\label{sec:symmetric}

We now restrict ourselves to the  $W$-invariant setting as considered in \cite{RR11} (up to a different normalization of the root system and the multiplicity). For a space $M$ of functions on $\mathbb T,$ we denote by $M^W$ the subspace of $W$-invariant elements. Note that for $f\in C^\infty(\mathbb T)^W,$  
$$ \widetilde D_\xi f = \partial_\xi f, \quad \widetilde \nabla_k f = \nabla f, \quad L_kf = L_k^{\text{diff}} f.$$
This significantly simplifies the analysis.  
Let us become more precise. 
The topological quotient $\T / W$ is homeomorphic to the closed alcove
\begin{align*}
  \alc = \{x\in\a : 0\leq \langle \alpha, x\rangle \leq 2\pi \> \forall \alpha\in R_{+}\}.
\end{align*}
Thus the space $C(\T)^{W}$ naturally identifies with $C(\alc)$. Similarly, $L^{p}(\T, \delta_{k})^{W}\cong L^{p}(\alc, \delta_{k})$. 
The analysis on $\alc$ with respect to the weight $\delta_{k}$ is governed by the symmetric Heckman-Opdam polynomials $(P_{\lambda})_{\lambda\in\domwt}$ indexed by the dominant weights $\domwt=P\cap\weylcham$, recall Remark \ref{rem:sahi}.   
Renormalizing the symmetric Heckman-Opdam polynomials according to
\begin{align*}
  R^{W}_{\lambda} \coloneqq \frac{1}{P_{\lambda}(0)}P_{\lambda}
  \end{align*}
gives an orthogonal basis $(R_{\lambda}^{W})_{\lambda\in\domwt}$ of $L^{2}(\alc, \delta_{k})$ made up by polynomials satisfying
\begin{align*}
  L_{k}^{\operatorname{diff}}R^{W}_{\lambda} = L_{k}R^{W}_{\lambda} = -\theta_{\lambda}R_{\lambda}^{W}. 
\end{align*}
For the last identity, it was used that the operator $L_{k}$ commutes with the action of $W$ on the space $\mathcal T$ of trigonometric polynomials. This can be seen by direct calculation, also see \cite[Cor. 5.1(1)]{Sch08}. In particular, $L_{k}(w.R_{\lambda})=w.(L_{k}R_{\lambda})=-\theta_{\lambda}w.R_{\lambda}$.
It is also immediate from the definitions that 
$$ R_\lambda^W =  \frac{1}{|W|} \sum_{w\in W} w.R_\lambda.$$ 

The symmetric Heckman-Opdam heat kernel studied in \cite{RR11} is given by
\begin{align*}
  \Gamma_{k}^{W}(t, x, y) &= \sum_{\lambda\in\domwt} r_{\lambda}^{W}e^{-\theta_{\lambda}t}R_{\lambda}^{W}(x) R_{\lambda}^{W}(-y) \qquad (t>0, x, y\in\alc)\\ \text{ with} \quad r^{W}_{\lambda} &= (\|R^{W}_{\lambda}\|^{2}_{L^{2}(\alc, \delta_{k})})^{-1}.
\end{align*}
The corresponding symmetric heat semigroup $(H^W\!(t))_{t\geq 0}$ on $(C(A_0), \|.\|_\infty)$ or on $(L^p(A_0),\delta_k)$  is generated by the corresponding closure of the  local operator $(L_{k}^{\operatorname{diff}},\mathcal T^W).$
Proceeding as in \cref{sec:poisson}, one can define a symmetric Poisson semigroup by subordination and obtain the symmetric Poisson kernel
\begin{align}\label{Poissonsymm}
  P_{k}^{W}\!(t, x, y) = \sum_{\lambda\in\domwt}r_{\lambda}^{W}e ^{-t\sqrt{\theta_{\lambda}}}R_{\lambda}^{W}(x)R_{\lambda}^{W}(-y) \qquad (t>0, x,y \in\alc).
\end{align}
The symmetric Poisson semigroup on $(C(A_0), \|.\|_\infty)$ is  given by
\begin{align*}
  P^{W}\!(t)f(x) := \int_{\alc}P_{k}^{W}(t, x, y) f(y)\,\delta_{k}(y)\, dy\, = \,\frac{1}{|W|}\int_{\T} P^{W}_{k}(t, x, y) f(y)\, \delta_{k}(y)\, dy 
\end{align*}
Let us now relate the non-symmetric and the symmetric setting.

\begin{lemma}\label{PW-P} \begin{enumerate}
\item[\rm{(a)}] $\Gamma_{k}(t, x,y)=\Gamma_{k}(t, wx, wy)$ and $P_{k}(t, x,y)=P_{k}(t, wx, wy)$ for all 
$w\in W.$  
\item[\rm{(b)}] Let $X$ be one of the spaces $(C(\mathbb T), \|.\|_\infty)$ or $L^p(\mathbb T, \delta_k), \, 1<p<\infty.$ Then for $f\in X^W$, we have $ H(t)f = H^W\!(t)f \text{ and } P(t)f = P^W\!(t)f.$
 \item[\rm{(c)}]  $\displaystyle  \Gamma_{k}^{W}(t, x, y)=\sum_{w\in W} \Gamma_{k}(t, x, wy)\quad\text{and}\quad P_{k}^{W}(t, x, y) = \sum_{w\in W}P_{k}(t, x, wy). $
 \end{enumerate}
\end{lemma}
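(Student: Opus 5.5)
The plan is to prove (a) via a $W$-equivariance of the expansion, and then derive (c) and (b) from (a).

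For (a), I will not try to show that each single term $R_\lambda(x)R_\lambda(-y)$ is $W$-invariant; this is false. Instead I will use the characterization of $\Gamma_k(t,\cdot,\cdot)$ as the integral kernel of $e^{tL_k}$ on $L^2(\T,\delta_k)$. The operator $L_k$ commutes with the action of $W$ on $\mathcal T$ (cf. \cite[Cor. 5.1(1)]{Sch08}, as already used above). Since $\delta_k$ is $W$-invariant and Haar measure is $W$-invariant, $f\mapsto w.f$ is unitary on $L^2(\T,\delta_k)$. Hence each eigenspace $V_\theta$ of $L_k$, for eigenvalue $-\theta$, is $W$-stable. By \eqref{eigenvalues_L_k} this eigenspace is spanned by the $R_\lambda$ with $\theta_\lambda=\theta$, and it is finite-dimensional. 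Its reproducing kernel
\begin{align*}
K_\theta(x,y)=\sum_{\theta_\lambda=\theta} r_\lambda R_\lambda(x)\overline{R_\lambda(y)}
\end{align*}
is independent of the choice of orthogonal basis. Since $\{w.R_\lambda\}$ is again an orthogonal basis with the same norms, we get $K_\theta(wx,wy)=K_\theta(x,y)$. Recall that $\overline{R_\lambda(y)}=R_\lambda(-y)$. Summing $e^{-\theta t}K_\theta$, respectively $e^{-t\sqrt\theta}K_\theta$, over $\theta$ gives (a) for $\Gamma_k$ and, via \eqref{poissonseries}, for $P_k$. Absolute convergence was established above, so regrouping the series is allowed. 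The only delicate point is that $\theta_\lambda$ depends only on $\lambda_+$, so the eigenspaces are indeed unions of the index sets.

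For (c), I will compute $\sum_{w}\Gamma_k(t,x,wy)$ by the same grouping. For a $W$-invariant $g$, the projection of $g$ onto $V_\theta$ lies in $V_\theta^W$, because the orthogonal projection commutes with $W$. The space $V_\theta^W$ is spanned by the $R^W_\mu$ with $\theta_\mu=\theta$, $\mu\in P_+$. Hence $\frac1{|W|}\sum_w K_\theta(x,wy)$ is the reproducing kernel of $V_\theta^W$ in $L^2(\T,\delta_k)$, and this equals $\sum_{\mu} \|R^W_\mu\|_{L^2(\T)}^{-2}R^W_\mu(x)R^W_\mu(-y)$. The main obstacle is the normalization. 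Since $r^W_\mu$ refers to $L^2(A_0,\delta_k)$ and integrals over $\T$ are $|W|$ times integrals over $A_0$ (for the normalization used in the formula for $P^W(t)$), we get $\|R^W_\mu\|^2_{L^2(\T)}=|W|\,(r^W_\mu)^{-1}$. This produces exactly $\sum_w K_\theta(x,wy)=\sum_\mu r^W_\mu R^W_\mu(x)R^W_\mu(-y)$. I will carefully reconcile the measure conventions, meaning Haar measure on $\T$ versus Lebesgue measure on $A_0$, as the identification $L^p(\T)^W\cong L^p(A_0)$ intends. The Poisson version follows identically, or alternatively by subordination.

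For (b), take $f\in X^W$. I will write $\int_\T \Gamma_k(t,x,y)f(y)\delta_k(y)\,dy$ and average over the substitution $y\mapsto wy$, using the $W$-invariance of $f$, $\delta_k$ and $dy$. This gives $\frac1{|W|}\int_\T\sum_w\Gamma_k(t,x,wy)f(y)\delta_k\,dy=\frac1{|W|}\int_\T\Gamma^W_k f\delta_k\,dy=H^W(t)f(x)$ by (c). The Poisson case is identical, and for $L^p$ one proceeds by density or directly.
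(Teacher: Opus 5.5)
Your proposal is correct, but for (b) and (c) it takes a different route from the paper.

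For (a), both arguments rest on the same fact: $W$ commutes with $L_k$ on $\mathcal T$, so $w.R_\lambda$ is again an eigenfunction for $-\theta_\lambda$. The paper turns this into $H(t)(w.R_\lambda)=e^{-\theta_\lambda t}\,w.R_\lambda$, hence $H(t)w=wH(t)$ on $\mathcal T$ and by density. It then compares the integral kernels of $(H(t)f)(w^{-1}\cdot)$ and $H(t)(w.f)$, and obtains the Poisson case by subordination. You turn the same fact into $W$-invariance of the reproducing kernels $K_\theta$ of the finite-dimensional eigenspaces, and then regroup the absolutely convergent series.

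The real difference is in (b) and (c). The paper proves (b) first, by uniqueness for the abstract Cauchy problem. For $f\in\mathcal T^W$, the orbit $H(t)f$ stays $W$-invariant. It therefore solves the Cauchy problem for $L_k^{\operatorname{diff}}$ on $X^W$, whose solution is $H^W\!(t)f$ by the generator description of $H^W$. The paper then reads off (c) from (b).

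You reverse the order. You prove (c) spectrally, by identifying $\frac{1}{|W|}\sum_w K_\theta(x,wy)$ as the kernel of the orthogonal projection onto $V_\theta^W$. This projection is $\Pi_\theta$ composed with the $W$-average, and both are commuting orthogonal projections. You then deduce (b) from (c) by averaging over $y\mapsto wy$.

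What each route buys:
\begin{itemize}
\item Your route is more elementary. It uses only absolute convergence of the series, the orthogonal-basis property of the $R^W_\mu$, and the kernel representation of $H^W\!(t)$ from \cite{RR11}. It also gives (b) directly for all $W$-invariant $f$ in the relevant spaces, without a density argument.
\item The price is the normalization bookkeeping $\|R^W_\mu\|^2_{L^2(\T,\delta_k)}=|W|/r^W_\mu$. You flag this correctly, and it agrees with the convention $\int_{A_0}=\frac{1}{|W|}\int_\T$ used in the paper's formula for $P^W\!(t)$.
\item The paper's route avoids these constants altogether. Instead it relies on characterizing $H^W$ through the closure of $(L_k^{\operatorname{diff}},\mathcal T^W)$.
\end{itemize}
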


\begin{proof} \rm{(a)} The non-symmetric heat semigroup satisfies 
  \begin{align*}
    H(t)(w.R_{\lambda}) = e^{-\theta_{\lambda}t} w.R_{\lambda} = w.H(t)R_{\lambda} 
  \end{align*}
  for all $\lambda\in P$. Consequently, $H(t)w = wH(t)$ on $\mathcal T$. By density, this extends to $X.$  Further, note that
  \begin{align*}
    \bigl(H(t)f\bigr)(w^{-1}x)=\int_{\T}\Gamma_{k}(t, w^{-1}x, y)f(y)\,\delta_{k}(y)\, dy
  \end{align*}
  and 
  \begin{align*}
    H(t)(w.f)(x) = \int_{\T}\Gamma_{k}(t, x, wy)f(y)\,\delta_{k}(y)\, dy. 
  \end{align*}
  This proves $\Gamma_{k}(t, wx, wy)=\Gamma_{k}(t, x, y),$ and subordination gives the corresponding property for the Poisson kernel $P_{k}$. 
    
    \rm{(b)} By general semigroup theory, the abstract Cauchy problem $\, L_k u = \partial_t u,\,  u(.\,,0) = f \in \mathcal T\,$ has a unique solution $u\in C^1([0, \infty[,X),$ which is given by $u(.\,,t) = H(t)f.$ Now suppose that $f\in \mathcal T$ is $W$-invariant. Then also $H(t)f$ is $W$-invariant for all $t>0.$ Therefore $v(.,t)= H(t)f $ provides the unique classical solution $v\in C^1([0,\infty[, X^W)$  of the abstract Cauchy problem $\, L_k^{\text{diff}} v = \partial_tv, \, v(.\,,0) = f.$ But on the other hand, this solution is given by $H^W\!(t) f$. The second identity follows by subordination.
    
     Part \rm{(c)} is immediate from (b).
    \end{proof}

For the symmetric Poisson semigroup we may similarly introduce $g$-functions
\begin{align*}
  g^{W}_{\bullet}(f)(x) \coloneqq \left(\int_{0}^{\infty} t|\bullet u_{f}^{W}(x, t)|^{2}\ dt\right)^{1/2}
\end{align*}
where $\bullet\in\{\partial_{t}, \partial_{\xi},\wt D_{\xi}, \nabla, \wt\nabla_{k}\}$ and $u_{f}^{W}(x, t)\coloneqq P^{W}(t)f(x)$ for $f\in L^{p}(\T, \delta_{k})^{W}$. We also define $g_0^W(f)$ as $g_0(f)$ before,  with $u_f^W$ instead of $u_f.$ 
However, as $u_{f}^{W}(\cdot, t)$ is $W$-invariant, we immediately see that 
\begin{align*}
  g^{W}_{0}(f)=0,\quad g^{W}_{\partial_{\xi}}(f)=g^{W}_{\wt D_{\xi}}(f),\quad g^{W}_{\nabla}(f)=g^{W}_{\wt\nabla_{k}}(f).
\end{align*}
Thus the study of the various $g$-functions from the previous sections reduces to the study of a single $g$-function of the classical form involving $\nabla$.
As a consequence of Lemma \ref{PW-P}(3)  we furthermore know that
\begin{align*}
  g_{\bullet}^{W}(f) = g_{\bullet}(f) \quad \text{ for }  f\in L^{p}(\T, \delta_{k})^{W}.
  \end{align*}
In particular, the $L^{p}$-boundedness for all $1< p< \infty$ for the $g$-functions corresponding to classical derivatives $\partial_{\xi}$ and $\nabla$ in the $W$-invariant case are an immediate consequence of  \cref{thm:gFunctionBound}\,(2). Further, by Lemma \ref{PW-P}, the results for the $g_{\ell}$-functions in \cref{thm:g1bound} and \cref{thm:gellbounds} carry over literally. We collect the results in the following

\begin{corollary}
  Let $1< p<\infty$ and $\bullet\in\{1,\ell, \nabla\}$ a subscript. Then there exists a constant $A_{\bullet, p}>0$ such that for all $f\in L ^{p}(\alc, \delta_{k})=L^{p}(\T, \delta_{k})^{W}$,
  \begin{align*}
    \|g_{\bullet}(f)\|_{p}\leq A_{\bullet, p}\|f\|_{p}. 
  \end{align*}
  Furthermore, there is a constant $B_{p}\geq 0$ such that for all $f\in L^{p}(\alc, \delta_{k})=L^{p}(\T, \delta_{k})^{W}$ with $\int_{\alc}f\,\delta_{k}\, dx=0,\,$ we have
  \begin{align*}
    \|f\|_{p}\leq B_{p}\|g_{1}(f)\|_{p}\leq B_{p}\|g_{\nabla}(f)\|_{p}. 
  \end{align*}
\end{corollary}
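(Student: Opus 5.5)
The plan is to reduce everything to the non-symmetric results by means of \cref{PW-P}, using $L^{p}(\alc,\delta_{k})=L^{p}(\T,\delta_{k})^{W}$ with norms agreeing up to the factor $|W|^{1/p}$. In particular all constants may be taken over from the non-symmetric setting.

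\textbf{Step 1: the semigroups agree.} Fix $f\in L^{p}(\T,\delta_{k})^{W}$ with $1<p<\infty$. By \cref{PW-P}(b) we have $u_f^W(\cdot,t)=P^W(t)f=P(t)f=u_f(\cdot,t)$ for every $t\ge 0$. Hence the functions $u^W_f$ and $u_f$ on $\T\times\opint$ coincide, and so do all their space and time derivatives. Consequently $g^W_{\bullet}(f)=g_{\bullet}(f)$ pointwise for $\bullet\in\{1,\ell,\nabla\}$.

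\textbf{Step 2: upper bounds.} For $g_1$ and $g_\ell$ we invoke \cref{thm:gellbounds}; the case $\ell=1$ gives $g_1$. For $g_\nabla$, $W$-invariance of $u_f(\cdot,t)$ gives $d_\alpha u_f=0$. Therefore $g_0(f)=0$, and \cref{prop:gineq}(2) yields $g_\nabla(f)=g_{\wt\nabla_k}(f)$. This quantity is bounded on $L^p$ for all $1<p<\infty$ by \cref{thm:gFunctionBound}(2). The latter was obtained from the Riesz transform bound \cref{Riesz_cont} via $g_{\wt D_{\xi_j}}(f)=g_1(\mathcal R_jf)$. Note that $\mathcal R_jf$ is generally not $W$-invariant, but this causes no problem: the estimate is applied in the non-symmetric space.

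\textbf{Step 3: the reverse inequality.} Since $\int_{\alc}f\delta_k\,dx=|W|^{-1}\int_\T f\delta_k\,dx$, the mean-zero condition transfers to $\T$. Then \cref{thm:g1bound} with $\bullet=\nabla$ gives $\|f\|_p\le B_p\|g_1(f)\|_p\le B_p\|g_\nabla(f)\|_p$. The second inequality is in any case trivial pointwise, since $|\partial_t u|\le|\nabla u|$. Rescaling the norms by $|W|^{1/p}$ finishes the argument.

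\textbf{Main obstacle.} The only delicate point is the identification in Step 1 for general $L^p$ data, where \cref{PW-P}(b) must be valid. It is stated there for $L^p$ with $1<p<\infty$, so I would simply invoke it. If needed, the identity can first be established on $\mathcal T^W$ and then extended by density, using the contractivity of both Poisson semigroups.
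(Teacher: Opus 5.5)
Your proposal is correct and follows essentially the same route as the paper. The paper likewise identifies $u_f^W$ with $u_f$ via \cref{PW-P}, and uses $W$-invariance to get $g_0(f)=0$ and hence $g_\nabla(f)=g_{\wt\nabla_k}(f)$. It then applies \cref{thm:gFunctionBound}(2) for $g_\nabla$, and transfers \cref{thm:gellbounds} and \cref{thm:g1bound} directly to obtain the bounds for $g_1$, $g_\ell$ and the reverse inequality. Your remarks on extending from $\mathcal T^W$ by density and on the $|W|^{1/p}$ normalization of the norms make explicit details that the paper leaves implicit.
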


\begin{remark}
  Although we stated results for $W$-invariant functions, we extensively relied on non-symmetric theory. Indeed, Riesz transforms of $W$-invariant functions need not be $W$-invariant again. However, extending the $L^{p}$-boundedness of $g_{\wt\nabla_{k}}$ (and thus of $g_{\nabla}^{W}$) from $1<p\leq 2$, completely relied on the duality argument for the non-symmetric Riesz transforms. The subharmonicity in \cref{lem:gNablaKSubharmonic} further exemplifies this. If we would work in the symmetric setting and only consider $\nabla$ instead of $\wt\nabla_{k}$ (which are the same on $W$-invariant functions), we would run into structural problems, since the derivative of a $W$-invariant function is in general not $W$-invariant again. Thus, while additional reflection terms make certain calculations more laborious,  the non-symmetric theory is more flexible and adequate from a structural point of view.
\end{remark}

\begin{example}\label{example:rank1}
  \textbf{The symmetric rank-1 and direct product case.} Consider the root system $R=BC_1=\{\pm 1, \pm 2\}$ in $\frak a = \mathbb R$ with multiplicity $k=(k_1,k_2)$, where $k_1$ and $k_2$ are the values of $k$ on  $\pm 1$ and $\pm 2,$ respectively. In this case, $W=\Z_{2}, Q^\vee = P=\mathbb Z$ and $\mathbb T = \mathbb R/2\pi \mathbb Z.$ For $n \in \mathbb  Z_+=\{0,1,2,\ldots \}$ consider the (non-symmetric) Heckman-Opdam polynomials $R_n$ on $\mathbb T$ and put 
  $$ R_n^{\Z_{2}}(x)= \frac{1}{2}\bigl(R_n(x)+ R_{n}(-x)\bigr), \, x\in \mathbb T.$$
  The  $R_n^{\Z_{2}}$  are the normalized symmetric Heckman-Opdam polynomials of rank one. They can be written in terms of the classical Jacobi polynomials 
	$$ P_n^{(\alpha, \beta)}(z)	= \binom{n+\alpha}{n} \,_2F_1\Bigl(-n,n+\alpha+\beta+1; \alpha+1; \frac{1-z}{2}\Bigr)$$	
	as
  $$R_n^{\Z_{2}}(x)=R_n^{\Z_{2}}(k;x)=\frac{1}{\binom{n+\alpha}{n}} P_n^{(\alpha, \beta)}(\cos x) \quad \text{ with } \alpha= k_1+k_2-\tfrac{1}{2}, \, \beta = k_2-\tfrac{1}{2},$$ see \cite[Ex. 1.3.2]{HS94}.  Thus $ \rho = \frac{1}{2}(\alpha+\beta+1)$, and the  $R_n^{\Z_{2}}$ satisfy 
  $ L_k^{\operatorname{diff}}R_n^{\Z_{2}} = -n(n+2\rho) R_n^{\Z_{2}} \,$ 
	with 
	$$  L_k^{\text{diff}} \,=\, \frac{d^2}{dx^2} +\Bigl((\alpha-\beta) \cot\frac{x}{2} \, +\, (2\beta+1) \cot x\Bigr) \frac{d}{dx}.$$ 
Further, $$\delta_k(x) = \, 4^{\alpha+\beta+1}\big|\sin\bigl(\frac{x}{2}\bigr)\big|^{2\alpha+1}\big|\cos\bigl(\frac{x}{2}\bigr)\big|^{2\beta+1}. $$
The symmetric Poisson kernel is given by 
\begin{equation}\label{Poisson_Jacobi} P_k^{\Z_{2}}(t,x,y)= \sum_{n=0}^\infty r_n^{\Z_{2}}e^{-t\sqrt{n(n+\alpha+\beta+1)}} R_n^{\Z_{2}}(x)R_n^{\Z_{2}}(y). \end{equation}
	In \cite{NS08}, Littlewood-Paley theory for a multivariate direct product variant of this Jacobi setting was developed. 
Within  our general framework, this corresponds to the $W$-invariant case associated with the root system $R=\{\pm e_i, \pm 2e_i: 1\leq i \leq n\}$ in $\mathfrak a=\mathbb R^n$,  where $W=\mathbb Z_2^n$ acts by sign changes of the coordinates. The weight lattice is $P=\mathbb Z^n,$ and a multiplicity function on $R$ is of the form $k=(k^1, \ldots, k^n)$ with $k^i = (k^i_1, k^i_2),$ where $k_1^i$ and $k_2^i$ denote the values of $k$ on $\pm e_i$ and  $\pm 2e_i$, respectively. The corresponding symmetric Heckman-Opdam polynomials  are indexed by $P_+= \mathbb Z_+^n$ and are obtained from the non-symmetric Heckman-Opdam polynomials $(R_\lambda)_{\lambda \in P}$ by symmetrization: 
$$ R_\lambda^{\Z_{2}^{n}}(x) = \,\frac{1}{2^n}\!\sum_{w\in \mathbb Z_2^n} R_\lambda(wx), \quad \lambda \in \mathbb Z_+^n.$$ 
	Due to the direct product structure of the Weyl group, these are just tensor products of one-variable Jacobi polynomials: For $\lambda=(\lambda_1, \ldots, \lambda_n)\in \mathbb Z_+^n$ and $x=(x_1, \ldots, x_n) \in \mathbb R^n/2\pi\mathbb Z^n,$
  $$ R_\lambda^{\Z_{2}^{n}}(x) =\,\prod_{i=1}^n R_{\lambda_i}^{\Z_{2}}(k^i;x_i)$$
  with the one-variable Jacobi polynomials $R_{n}^{\Z_{2}}(k;x)$ as introduced above. 	The $\mathbb Z_2^n$-symmetric Poisson kernel \eqref{Poissonsymm} coincides with the kernel of the Poisson semigroup studied in \cite{NS08}.	
	The authors in \cite{NS08} work with a multivariate Jacobi operator $\mathcal J^{(\alpha, \beta)}$ which is obtained from  $-L_k^{\text{diff}}$ by the coordinate transform $x_i\mapsto \cos x_i,$ but instead of Cherednik operators they involve the first-order differential operators $\delta_i= \sqrt{1-x_i^2}\,\partial_{x_i}$ and their formal adjoints $\delta_i^\ast$, satisfying $\mathcal J^{(\alpha, \beta)} = \sum_{i=1}^n \delta_i^\ast\delta_i.$ They study  $g$-functions for the symmetric Poisson semigroup as well as Riesz transforms, following the lines of \cite{St70}. 
Our approach involving Cherednik operators includes the main results of \cite{NS08}. It has the advantage that it works for arbitrary root systems in a uniform manner.	
	
	We finally point out that in  the literature on harmonic analysis associated with one-variable Jacobi expansions (\cite{MS65, NS12, NS13}), the following variant of the  kernel \eqref{Poisson_Jacobi} is more common:
  $$ \widetilde P(t,x,y) = \sum_{n=0}^\infty r_n^{\Z_{2}} e^{-t(n+\rho)}R_n^{\Z_{2}}(x)R_n^{\Z_{2}}(y). $$
	 For fixed $y$, the kernel $u(x,t) = \widetilde P(t,x,y)$ satisfies the 
	 Laplace-type equation   $(L_k^{\text{diff}} - \rho^2 + \partial_t^2) u = 0.$ 
The advantage of our setting and that of \cite{NS08} is that our Poisson semigroup is Markovian, not only Submarkovian as for $\widetilde P.$
	\end{example}

\begin{example}\label{example:group_case}
\textbf{The case of Riemannian symmetric spaces of the compact type.}
After passing to averages over the Weyl group $W$, our framework also covers Littlewood-Paley-Stein theory for radial functions on Riemannian symmetric spaces of the compact type. Let us briefly elaborate: Let $U/K$ be a Riemannian symmetric space of the compact type, where $U$ is a simply connected compact Lie group and $K$ is the fixed group of some involutive automorphism of $U.$ The derivation of $\theta$ gives an involution of the Lie algebra $\mathfrak u$ of $U$ with eigenspace decomposition $\mathfrak u = \mathfrak k + i \mathfrak p,$ corresponding to the eigenvalues $1$ and $-1$. Functions on $U/K$ which are radial, i.e. $K$-invariant, can be naturally considered as functions on $i \frak a$, where $\frak a$ is a maximal abelian subspace of $\mathfrak p.$ The space $\mathfrak a$ is a finite-dimensional Euclidean space with the Killing form as scalar product. The (zonal) spherical functions of $U/K$, which form the basic building blocks for radial harmonic analysis on $U/K$ are then given in terms of the symmetric Heckman-Opdam polynomials associated with the root data of $(U,K).$ For details, see e.g. \cite{RR15}. Radial analysis on $U/K$ then 
	boils down to the analysis of Heckman-Opdam expansions.
\end{example}

\section*{AI Disclosure} 
During the preparation of this manuscript, the authors used OpenAI's GPT-5.6 Sol and Claude Opus 5 solely to assist with proofreading and minor language improvements. All mathematical content, in particular the revised proofs, was developed and written by the authors independently without use of AI-assisted generation tools.


\begin{thebibliography}{9999999}
\bibitem[AS12]{AS12} 
B. Amri and M. Sifi, Riesz transforms for Dunkl transform, Ann. Math. Blaise Pascal {\bf 19} (2012), no.~1, 247--262.

\bibitem[A17]{A17} 
J.-P. Anker, An introduction to Dunkl theory and its analytic aspects, in {\it Analytic, algebraic and geometric aspects of differential equations}, 3--58, Trends Math., Birkh\"auser/Springer, Cham, 2017.

\bibitem[ADH19]{ADH19} 
J.-P. Anker, J. Dziuba\'nski and A. Hejna, Harmonic functions, conjugate harmonic functions and the Hardy space $H^1$ in the rational Dunkl setting, J. Fourier Anal. Appl. {\bf 25} (2019), no.~5, 2356--2418.

\bibitem[BS11]{BS11}
N. Ben~Salem and T. Samaali, Hilbert transform and related topics associated with Jacobi-Dunkl operators of compact and noncompact types, Adv. Pure Appl. Math. {\bf 2} (2011), no.~3-4, 367--388.

\bibitem[B24]{B24diss} D. Brennecken,
Contributions to Dunkl theory,
Universit\"at Paderborn, Dissertation, 2024.

\bibitem[B25]{B25} D. Brennecken, Boundedness of the Cherednik kernel and its limit transition from type BC to type A, Indag. Math. (N.S.) {\bf 36} (2025), no.~6, 1717--1744.

\bibitem[CS79]{CS79} W. Connett and A. Schwartz, The Littlewood-Paley theory for Jacobi expansions. \emph{Trans. Amer. Math. Soc.} {\bf 251} (1979), 219--234. 

\bibitem[Ch91]{Ch91} 
I. Cherednik, A unification of Knizhnik-Zamolodchikov and Dunkl operators via affine Hecke algebras, Invent. Math. {\bf 106} (1991), no.~2, 411--431.

\bibitem[DW26]{DW26} F. D'Emilio and B. Wick, Martingale transforms and compensated Bellman estimates for Dunkl Riesz transforms. ArXiv:2609.12117v1. 


\bibitem[dJ93]{dJ93} 
M. de~Jeu, The Dunkl transform, Invent. Math. {\bf 113} (1993), no.~1, 147--162.

\bibitem[DH22]{DH22} 
J. Dziuba\'nski and A. Hejna, Upper and lower bounds for Littlewood-Paley square functions in the Dunkl setting, Studia Math. {\bf 262} (2022), no.~3, 275--303.
  
\bibitem[DS58]{DS58} N. Dunford and J.~T. Schwartz, Linear Operators Part I: General Theory. Pure and Applied Mathematics, Interscience Publishers, inc., New York, 1958.

  \bibitem[Du89]{Du89}
C.~F. Dunkl, Differential-difference operators associated to reflection groups, Trans. Amer. Math. Soc. {\bf 311} (1989), no.~1, 167--183.

\bibitem[EK86]{EK86} S. Ethier and T. Kurtz, Markov Processes: Characterization and Convergence. Wiley, New York, 1986. 

\bibitem[FSS15]{FSS15}
L. Forzani, E. Sasso and R. Scotto, $L^p$ boundedness of Riesz transforms for orthogonal polynomials in a general context, Studia Math. {\bf 231} (2015), no.~1, 45--71.

\bibitem[H00]{H00} S. Helgason, \emph{Groups and Geometric Analysis: Integral Geometry, Invariant Differential Operators, and Spherical Functions.} Mathematical Surveys and Monographs. American Mathematical Society, 2000.  

\bibitem[HO21]{HO21} G. Heckman and E. Opdam,  Jacobi polynomials and hypergeometric functions associated with root systems. In: Encyclopedia of Special Functions, Part II: Multivariable Special Functions, eds. T.H. Koornwinder, J.V. Stokman, Cambridge University Press, Cambridge, 2021. 
 
\bibitem[HS94]{HS94} G. Heckman and  H. Schlichtkrull, Harmonic Analysis and Special Functions on Symmetric Spaces, Perspectives in Mathematics, Vol. 16, Academic Press, 1994. 
 
 \bibitem[H23]{H23}
 A. Hejna, Dimension-free $L^p$-estimates for vectors of Riesz transforms in the rational Dunkl setting, J. Anal. Math. {\bf 150} (2023), no.~2, 485--528.

 \bibitem[K25]{K25} 
V. Kumar, $L^p$-$L^q$ hypergeometric spectral and Fourier multipliers associated with root systems, Potential Anal. {\bf 63} (2025), no.~3, 1517--1538.

\bibitem[LZL17]{LZL17}
J.~Q. Liao, X. Zhang and Z. Li, On Littlewood-Paley functions associated with the Dunkl operator, Bull. Aust. Math. Soc. {\bf 96} (2017), no.~1, 126--138.

\bibitem[LZ23]{LZ23} 
H. Li and M. Zhao, Dimension-free square function estimates for Dunkl operators, Math. Nachr. {\bf 296} (2023), no.~3, 1225--1243.

\bibitem[MS65]{MS65} 
B. Muckenhoupt and E.~M. Stein, Classical expansions and their relation to conjugate harmonic functions, Trans. Amer. Math. Soc. {\bf 118} (1965), 17--92.

\bibitem[NS08]{NS08} 
A.~H. Nowak and P. Sj\"ogren, Riesz transforms for Jacobi expansions, J. Anal. Math. {\bf 104} (2008), 341--369.

\bibitem[NS12]{NS12} 
A.~H. Nowak and P. Sj\"ogren, Calder\'on-Zygmund operators related to Jacobi expansions, J. Fourier Anal. Appl. {\bf 18} (2012), no.~4, 717--749.

\bibitem[NS13]{NS13} 
A.~H. Nowak and P. Sj\"ogren, Sharp estimates of the Jacobi heat kernel, Studia Math. {\bf 218} (2013), no.~3, 219--244.

\bibitem[NSt06]{NSt06} 
A.~H. Nowak and K. Stempak, $L^2$-theory of Riesz transforms for orthogonal expansions, J. Fourier Anal. Appl. {\bf 12} (2006), no.~6, 675--711.

\bibitem[Op95]{Op95} 
E.~M. Opdam, Harmonic analysis for certain representations of graded Hecke algebras, Acta Math. {\bf 175} (1995), no.~1, 75--121.

\bibitem[Op00]{Op00} 
E.~M. Opdam, {\it Lecture notes on Dunkl operators for real and complex reflection groups}, MSJ Memoirs, 8, Math. Soc. Japan, Tokyo, 2000.

\bibitem[RR11]{RR11}
H. Remling and M. R\"osler, The heat semigroup in the compact Heckman-Opdam setting and the Segal-Bargmann transform, Int. Math. Res. Not. IMRN {\bf 2011}, no.~18, 4200--4225.

\bibitem[RR15]{RR15}
H. Remling and M. R\"osler, Convolution algebras for Heckman-Opdam polynomials derived from compact Grassmannians, J. Approx. Theory {\bf 197} (2015), 30--48.

\bibitem[RKV13]{RKV13}
M. R\"osler, T.~H. Koornwinder and M. Voit, Limit transition between hypergeometric functions of type BC and type A, Compos. Math. {\bf 149} (2013), no.~8, 1381--1400.

\bibitem[RV04]{RV04} 
M. R\"osler and M. Voit, Positivity of Dunkl's intertwining operator via the trigonometric setting, Int. Math. Res. Not. {\bf 2004}, no.~63, 3379--3389.

\bibitem[RV08]{RV08} M. R\"osler and M. Voit, Dunkl theory, convolution algebras, and related Markov processes. In: Harmonic and stochastic analysis of Dunkl processes; Eds. P. Graczyk et al.,  \emph{Hermann Mathematiques}, Paris, 2008, pp. 1--112.

\bibitem[Sch08]{Sch08} 
B. Schapira, Contributions to the hypergeometric function theory of Heckman and Opdam: sharp estimates, Schwartz space, heat kernel, Geom. Funct. Anal. {\bf 18} (2008), no.~1, 222--250.

\bibitem[S00a]{S00a}
S. Sahi, A new formula for weight multiplicities and characters, Duke Math. J. {\bf 101} (2000), no.~1, 77--84.

\bibitem[S00b]{S00b}
S. Sahi, Some properties of Koornwinder polynomials, in {\it $q$-series from a contemporary perspective (South Hadley, MA, 1998)}, 395--411, Contemp. Math., 254, Amer. Math. Soc., Providence, RI. 

\bibitem[S05]{S05}
F. Soltani, Littlewood-Paley operators associated with the Dunkl operator on $\Bbb R$, J. Funct. Anal. {\bf 221} (2005), no.~1, 205--225.

\bibitem[St70]{St70} 
E.~M. Stein, {\it Topics in harmonic analysis related to the Littlewood-Paley theory}, Annals of Mathematics Studies, No. 63, Princeton Univ. Press, Princeton, NJ, 1970 Univ. Tokyo Press, Tokyo, 1970.

\bibitem[StT03]{ST03} 
K. Stempak and J.~L. Torrea, Poisson integrals and Riesz transforms for Hermite function expansions with weights, J. Funct. Anal. {\bf 202} (2003), no.~2, 443--472.

\bibitem[Th93]{Th93} 
S. Thangavelu, {\it Lectures on Hermite and Laguerre expansions}, Mathematical Notes, 42, Princeton Univ. Press, Princeton, NJ, 1993.

\bibitem[TX07]{TX07} S. Thangavelu and Y. Xu, Riesz transform and Riesz potentials for Dunkl transform, J. Comput. Appl. Math. {\bf 199} (2007), no.~1, 181--195.

\end{thebibliography}
\end{document}